\newtheorem{thm}{Theorem}[section]
\newtheorem{lemma}[thm]{Lemma}
\newtheorem{cor}[thm]{Corollary}
\newtheorem{rmk}[thm]{Remark}
\newtheorem{defn}[thm]{Definition}
\newtheorem{proposition}[thm]{Proposition}
\newtheorem{assumption}{Assumption}
\newcommand{\cL}{{\mathcal L}}
\newcommand{\te}{{\theta}}
\newcommand{\ve}{{\varepsilon}}
\newcommand{\cP}{{\mathcal P}}
\newcommand{\N}{\mathbb{N}}
\newcommand{\R}{\mathbb{R}}
\newcommand{\mcl}{\mathcal L}
\newcommand{\bbp}{\mathbb P}
\newcommand{\C}{\mathbb{C}}
\newcommand{\E}{\mathbb E}
\newcommand{\al}{\alpha}
\newcommand{\del}{\delta}
\newcommand{\ep}{\epsilon}
\newcommand{\eps}{\epsilon}
\newcommand{\sig}{\sigma}
\newcommand{\Sig}{\Sigma}
\newcommand{\ka}{\kappa}
\newcommand{\lam}{\lambda}
\newcommand{\Lam}{\Lambda}
\newcommand{\Om}{\Omega}
\newcommand{\om}{\omega}
\newcommand{\tld}[1]{\tilde{#1}}
\newcommand{\mc}[1]{\mathcal{#1}}
\DeclareMathOperator{\essinf}{ess\ inf}
\DeclareMathOperator{\esssup}{ess\ sup}
\DeclareMathOperator{\var}{var}
\newcommand{\bbC}{{\mathbb C}}
\newcommand{\bbR}{{\mathbb R}}
\newcommand{\B}{\mathcal{B}} 
\newcommand{\BV}{\B} 
\newcommand{\RR}{\mathcal R} 
\newcommand{\X}{X} 
\newcommand{\paeom}{\ensuremath{\bbp\text{-a.e. } \om \in \Om}}
\newcommand{\lot}{\lambda_\omega^\theta} 
\newcommand{\Lsot}{\mcl_{\sigma^{-1}\omega}^\theta} 
\begin{document}
\title{Limit theorems for random expanding or Anosov dynamical systems and vector-valued observables}

\author{Davor  Dragi\v cevi\' c \footnote{Department of Mathematics,
University of Rijeka,  Rijeka, Croatia. E-mail: \texttt{ddragicevic@math.uniri.hr}} and 
Yeor Hafouta \footnote{Department of Mathematics,
The Ohio State University,
Columbus OH USA. E-mail: \texttt{yeor.hafouta@mail.huji.ac.il, hafuta.1@osu.edu
}.} \footnote{2010 Mathematics Subject Classification. Primary 37D20, 60F05.} \footnote{Keywords: random dynamical systems; limit theorems; piecewise-expanding dynamics; hyperbolic dynamics}}

\maketitle

\begin{abstract}
The purpose of this paper is twofold. In one direction, we extend the spectral method for random piecewise expanding and hyperbolic (Anosov) dynamics developed by the first author \textit{et al}. to establish  quenched versions of the large deviation principle, central limit theorem and the local central limit theorem 
for \emph{vector-valued} observables. We stress that the previous works considered exclusively the case of scalar-valued observables.

In another direction, we show that this method can be used to establish a variety of new limit laws (either for scalar or vector-valued observables) that have not been discussed previously in the literature for the classes of dynamics we consider. More precisely, we establish the moderate deviations principle, 
concentration inequalities, Berry-Esseen estimates as well as Edgeworth and large deviation expansions.

Although our techniques rely on the approach developed in the previous works of the first author \textit{et al}., we emphasize that our arguments require several nontrivial adjustments as well as  new ideas. 
\end{abstract}

\section{Introduction}
The so-called spectral method represents a powerful approach for establishing limit theorems. It has been introduced by  Nagaev~\cite{N57, N61} in the context of Markov chains and by Guivarc’h and Hardy~\cite{GH88} as well as Rousseau-Egele~\cite{RE83} for the  deterministic dynamical systems.  We refer to~\cite{HennionHerve} for a detailed presentation of this method. 
In the case of deterministic dynamics, we have a map $T$ on the state space $X$ which preserves a probability measure $\mu$ on $X$. Then, for a suitable class of observables $g$, we want  to obtain limit laws for the process $(g\circ T^n)_{n\in \N}$. In other words, we wish to study the distribution of Birkhoff sums
$S_n g=\sum_{i=0}^{n-1}g\circ T^i$, $n\in \N$. 
 Let $\mcl$ be the transfer operator (acting on  a suitable Banach space $\mathcal B$) associated to $T$ and for each complex parameter $\theta$, let $\mcl^\theta$ be the so-called twisted transfer operator given by $\mcl^\theta f=\mcl (e^{\theta g}\cdot f)$, $f\in \mathcal B$. The core of the spectral method consists of the 
following steps:
\begin{itemize}
\item rewritting the characteristic function of $S_n g$ in terms of the powers of the twisted transfer operators $\mcl^\theta$;
\item applying the classical Kato's perturbation theory  to show that for  $\theta$ sufficiently close to $0$, $\mcl^\theta$ inherits nice spectral properties from $\mcl$. More precisely, usually one works under assumptions which ensure that $\mcl$ is a quasi-compact operator of spectral radius $1$ with the property that $1$ is the only eigenvalue 
on the unit circle with multiplicity one (and  with the eigenspace essentially corresponding to  $\mu$). Then, for $\theta$ sufficiently close to $0$, $\mcl^\theta$ is again a quasi-compact operator with an isolated eigenvalue of multiplicity one such that both the eigenvalue and the corresponding 
eigenspace (as well as other related objects) depend analytically on $\theta$. 
\end{itemize}
This method has been used to establish a variety of limit laws for broad classes of chaotic deterministic systems exhibiting some degree of hyperbolicity. Indeed, it has been used to establish   large deviation principles \cite{HennionHerve,rey-young08}, central limit theorems \cite{RE83,Broise,HennionHerve,AyyerLiveraniStenlund}, Berry-Esseen bounds \cite{GH88,gouezel05}, local central limit theorems \cite{RE83,HennionHerve,gouezel05} as well as the almost sure invariance principle~\cite{gouezel10}.  We refer to the excellent survey paper~\cite{G15} for more details and further references. 

Very recently, the spectral method was extended to broad classes of random dynamical systems. More precisely, the first author \textit{et al}. adapted the spectral method in order to obtain several quenched limit theorems for random piecewise expanding as well as random hyperbolic dynamics~\cite{DFGTV1, DFGTV2}. 
In particular they proved the first version of the quenched local central limit in the context of random dynamics. A similar task was independently accomplished for random distance expanding dynamics by the second author and Kifer~\cite{HK}.  We stress that the study of the statistical properties of the random or time-dependent dynamical systems was initiated by Bakhtin~\cite{B1, B2} and 
 Kifer~\cite{K96,K98} using different techniques from those in~\cite{DFGTV1, DFGTV2} (and the present paper).  Indeed, the methods in~\cite{B1, B2} rely on the use of real Birkhoff cones (and share some similarities with the approach in~\cite{HK}) although Bakhtin does not discuss the local central limit theorem and the dynamics he considered does not allow the presence of singularities.  Moreover, his results do not include  the large deviations principles obtained in~\cite{DFGTV1, DFGTV2}. 
On the other hand, all the results in~\cite{K98} rely on the martingale method which although also very powerful,  cannot for example  be used to obtain a local central limit theorem.

Let us now briefly discuss the main ideas from~\cite{DFGTV1, DFGTV2, HK}. Instead of a single map as in the deterministic setting, we now have a collection of maps $(T_\om)_{\om \in \Om}$ acting on a state space $X$, where $(\Omega, \mathcal F, \mathbb P)$ is a probability space. We consider random compositions of the form
\[
T_\om^{(n)}=T_{\sigma^{n-1}\om}\circ \ldots \circ T_\om \quad \text{for $\om \in \Om$ and $n\in \N$,}
\]
where $\sigma \colon \Om \to \Om$ is an invertible $\mathbb P$-preserving transformation. Under appropriate conditions, there exists a unique family of probability measures $(\mu_\om)_{\om \in \Om}$ on $X$ such that $T_\om^*\mu_\om=\mu_{\sigma \om}$ for $\mathbb P$-a.e. $\om \in \Om$. Then, for a suitable class
of observables $g\colon \Om \times X\to \R$, we wish to establish limit laws for the process $(g_{\sigma^n \om }\circ T_\om^{(n)})_{n\in \N}$ with respect to $\mu_\om$, where $g_\om:=g(\om, \cdot)$, $\om \in \Om$.  Let $\mcl_\om$ denote the transfer operator associated to $T_\om$ (acting on a suitable Banach space $\mathcal B$). In a similar manner to that
in the deterministic case, for each $\theta \in \C$ and $\om \in \Om$ we consider the twisted transfer operator $\mcl_\om^\theta$ on $\mathcal B$ defined by $\mcl_\om^\theta f=\mcl (e^{\theta g(\om, \cdot)}f)$, $f\in \mathcal B$. Then, the arguments in~\cite{DFGTV1, DFGTV2} proceed as follows:
\begin{itemize}
\item we represent the characteristic functions of the random Birkhoff sums 
\begin{eqnarray*}
&S_n g(\om, \cdot)=\sum_{i=0}^{n-1} g_{\sigma^i \omega}(T_\om^{(i)}(\cdot))
\end{eqnarray*} 
in terms of twisted transfer operators;
\item in the language of the multplicative ergodic theory, for $\theta$ sufficiently close to $0$, the twisted cocycle $(\mcl_\om^\theta)_{\om \in \Om}$ is quasi-compact, its largest Lyapunov exponents has mulitiplicity one (i.e. the associated Oseledets subspace is one-dimensional) and similarly to
 the deterministic case all these objects exhibit sufficiently regular behaviour with respect to $\theta$.
\end{itemize}
Although Lyapunov exponents and associated Oseledets subspaces precisely represent a nonautonomous analogons of eigenvalues and eigenspaces, we emphasize that the methods in~\cite{DFGTV1, DFGTV2} require a highly nontrivial adjustments of the classical spectral method for deterministic dynamics. 

The goal of the present paper is twofold. In one direction, we wish to extend the main results from~\cite{DFGTV1, DFGTV2} by establishing quenched versions of  the large deviations principle, central limit theorem and the local central limit for vector-valued observables. We stress that in~\cite{DFGTV1, DFGTV2} the authors dealt only with scalar-valued observables. Although in order to accomplish this we heavily rely on the previous work, we stress that the treatment of vector-valued observables requires several changes of nontrivial nature when compared to the previous papers. 

In another direction, we show that the spectral method developed in~\cite{DFGTV1, DFGTV2} can be used to establish a variety of new limit laws (either for scalar or vector-valued observables) that have not been considered previously in the literature (at least for the classes of dynamics that are considered in the present paper).
Indeed, we here for the first time discuss a moderate deviations principle, Berry-Esseen bounds, concentration inequalities, Edgeworth and certain large deviations expansions for random piecewise expanding and hyperbolic dynamics. We emphasize that each of these results requires nontrivial adaptation of the techniques 
developed in~\cite{DFGTV1, DFGTV2}. We in particularly stress that similarly to~\cite{DFGTV1, DFGTV2},  none of our results require any mixing assumptions for the base map $\sigma$.

Finally, we would like to briefly mention some of other works devoted to statistical properties of random dynamical systems. We particularly mention the works of Ayyer, Liverani and Stenlund~\cite{AyyerLiveraniStenlund} as well as 
Aimino, Nicol and~Vaienti~\cite{ANV15} that preceded~\cite{DFGTV1}. They also discuss limit laws for random toral automorphisms and random piecewise expanding 
maps respectively  but under a restrictive assumption that the base space $(\Om, \sig)$ is a Bernoulli shift. Furthermore, we mention the recent interesting papers by Bahsoun and collaborators~\cite{ABR, BahsounBose, BBR} as well as Su~\cite{Su} concerned with the decay of correlation and 
limit laws for systems which can be modelled by random Young towers. Further relevant contributions to the study of statistical properties of random or time-dependent dynamics have been established by N{\'a}ndori, Sz{\'a}sz, and Varj{\'u}~\cite{NandoriSzaszVarju}, Nicol, T\"{o}r\"{o}k and Vaienti~\cite{nicol_torok_vaienti_2016},
Hella and Stenlund~\cite{HeSt}, Lepp{\"a}nen and Stenlund~\cite{LeppanenStenlund, LeppanenStenlund2} as well as the second author~\cite{HafEDG,HafSDS}. We also refer the readers to corresponding results for inhomogeneous Markov chains, including ones arising as almost sure realizations of Markov chains in random (dynamical) environments due to Dolgopyat and Sarig \cite{DS} and Kifer and the second author \cite{HK}.

\section{Preliminaries}
In this section we recall basic notions and results from the multiplicative ergodic theory which will be used in the subsequent sections. The material is essentially taken from~\cite{DFGTV1} but we include it for readers' convenience.
\subsection{Multiplicative ergodic theorem}\label{sectionMET}
In this subsection we recall the recently established versions of the multiplicative ergodic theorem which can be applied to the study of cocycles of transfer operators and  will play
an important role in the present paper. We begin by recalling some basic notions.

A tuple $\mc{R}=(\Om, \mc{F}, \bbp, \sigma, \B, \mcl)$ will be called a linear cocycle, or simply a\ \textit{cocycle}, if
 $\sigma$ is an invertible ergodic measure-preserving transformation on a probability space $(\Omega,\mathcal F,\mathbb P)$,  $(\B, \|\cdot \|)$ is a Banach space and
 $\mathcal L\colon \Omega\to L(\B)$ is a family of bounded linear operators such that
$\log^+\|\mathcal L(\omega)\|\in L^1(\mathbb P)$. 
Sometimes we will also use $\mc{L}$ to refer to the full cocycle $\mc{R}$.
In order to obtain sufficient measurability conditions, we assume the following:
\begin{enumerate}[label=(C\arabic*), series=conditions]
\setcounter{enumi}{-1}
\item \label{cond:METCond}
 $\Om$ is a Borel subset of a separable, complete metric space, $\sigma$ is a homeomorphism and $\mcl$ is either  $\bbp-$continuous (that is, $\mcl$ is  continuous on each of countably many Borel sets whose union is $\Omega$) or strongly measurable (that is, the map $\omega \mapsto \mcl_\omega f$ is measurable for each $f\in \B$) and $\B$ is separable.
\end{enumerate}
For each $\om \in \Om$ and $n\geq 0$,  let $ \mcl_\om^{(n)}$ be the linear operator given by  \[ \mcl_\om^{(n)}:= \mcl_{\sig^{n-1}\om}\circ \dots \circ\mcl_{\sig\om} \circ \mcl_\om. \]
Condition~\ref{cond:METCond} implies that the map $\om \mapsto \log \| \mcl_\om^{(n)}\|$ is measurable for each $n\in \N$. Thus,
Kingman's sub-additive ergodic theorem ensures that the following limits exist and coincide for \paeom:
\begin{align*}
\Lam(\mc{R}) &:= \lim_{n\to \infty} \frac1n\log \| \mcl_\om^{(n)}\|\\
\ka(\mc{R}) &:=  \lim_{n\to \infty} \frac1n\log \text{ic}( \mcl_\om^{(n)}),
\end{align*}
where \[\text{ic}(A):=\inf\Big\{r>0 : \ A(B_\B) \text{ can be covered with finitely many balls of radius }r \Big\},\] and $B_\B$ is the unit ball of $\B$.
 The cocycle $\mc{R}$ is called \textit{quasi-compact} if
$\Lam(\mc{R})> \ka(\mc{R})$.
 The quantity $\Lam(\mc{R})$ is called the \textit{top Lyapunov exponent} of the cocycle and generalises the notion of (logarithm of) spectral radius of a linear operator. Furthermore,  $\ka(\mc{R})$ generalises the notion of essential spectral radius to the context of cocycles.
\begin{rmk}
We refer to~\cite[Lemma 2.1]{DFGTV1} for useful criteria which can be used to verify that the cocycle is quasi-compact.
\end{rmk}
 A spectral-type decomposition for quasi-compact cocycles can be obtained via  the following \textit{multiplicative ergodic theorem}.
\begin{thm}[Multiplicative ergodic theorem, MET \cite{BlumenthalMET,GTQuas1,FLQ2}] \label{thm:MET}
Let $\mathcal R=(\Omega,\mathcal F,\mathbb
P,\sigma,\B,\mathcal L)$ be a  quasi-compact cocycle and
suppose that condition~\ref{cond:METCond} holds.
Then, there exists $1\le l\le \infty$ and a sequence of exceptional
Lyapunov exponents
\[ \Lam(\RR)=\lambda_1>\lambda_2>\ldots>\lambda_l>\kappa(\RR) \quad \text{(if $1\le l<\infty$)}\]
or  \[ \Lam(\RR)=\lambda_1>\lambda_2>\ldots \quad \text{and} \quad \lim_{n\to\infty} \lambda_n=\kappa(\RR) \quad \text{(if $l=\infty$);} \]
 and for $\mathbb P$-a.e. $\omega \in \Om$ there exists a unique splitting (called the \textit{Oseledets splitting}) of $\B$ into closed subspaces
\begin{equation}\label{eq:splitting}
\B=V(\omega)\oplus\bigoplus_{j=1}^l Y_j(\omega),
\end{equation}
depending measurably on $\om$ and  such that:
\begin{enumerate} [label=(\Roman*)]
\item  For each $1\leq j \leq l$, $Y_j(\omega)$ is finite-dimensional ($m_j:=\dim Y_j(\omega)<\infty$),  $Y_j$ is equivariant i.e. $\mcl_\om Y_j(\omega)= Y_j(\sigma\omega)$ and for every $y\in Y_j(\omega)\setminus\{0\}$,
\[\lim_{n\to\infty}\frac 1n\log\|\mathcal L_\omega^{(n)}y\|=\lambda_j.\]
(Throughout this paper, we will  also refer to $Y_1(\om)$ as simply $Y(\om)$ or $Y_\om$.)
\item
$V$ is equivariant i.e. $\mcl_\om V(\omega)\subseteq V(\sigma\omega)$ and
for every $v\in V(\omega)$, \[\lim_{n\to\infty}\frac 1n\log\|\mathcal
L_\omega^{(n)}v\|\le \kappa(\RR).\]
\end{enumerate}
\end{thm}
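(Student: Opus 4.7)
The plan is to follow the classical Raghunathan-type strategy adapted to the infinite-dimensional setting, as developed by Ma\~n\'e, Thieullen and more recently Blumenthal and Gonz\'alez-Tokman--Quas. The three ingredients are: (i) Kingman's subadditive ergodic theorem applied to exterior powers (or approximation numbers) to produce the discrete spectrum above the index-of-compactness $\kappa(\mathcal R)$, (ii) a backward construction, exploiting the invertibility of $\sigma$, to build the finite-dimensional equivariant ``fast'' subspaces $Y_j(\omega)$, and (iii) a tempered separation argument to glue these with a measurable slow subspace $V(\omega)$.

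First I would introduce, for each $k\ge 1$, the $k$-th singular number $\alpha_k(A):=\inf\{\|A-K\|:\operatorname{rank}(K)<k\}$, or equivalently work with norms of exterior powers $\wedge^k \mcl_\om^{(n)}$. The sequence $\log\|\wedge^k \mcl_\om^{(n)}\|$ is subadditive and in $L^1$ by the integrability assumption, so Kingman yields a.e. limits $\mu_1+\cdots+\mu_k$. Telescoping gives a nonincreasing sequence $\mu_1\ge\mu_2\ge\ldots$ whose strict descents define the exceptional exponents $\lam_1>\lam_2>\ldots$ with multiplicities $m_j$, and the identity $\lim_k \mu_k=\ka(\RR)$ follows from a standard comparison of $\alpha_k$ with the Kuratowski index. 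The slow subspace is then defined pointwise by
\[
V(\om):=\Big\{f\in \B:\limsup_{n\to\infty}\tfrac1n\log\|\mcl_\om^{(n)}f\|\le\ka(\RR)\Big\},
\]
which is a closed, forward-invariant subspace and (by elementary Borel operations on countable intersections of preimages) depends measurably on $\om$.

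Next, to build $Y_j(\om)$, I would use the invertibility of $\sigma$ and a backward selection argument. Fix an exponent $\lam_j$ and, for each $n$, let $E_j^{(n)}(\om)$ be the sum of the top eigenspaces of $(\mcl_{\sig^{-n}\om}^{(n)})^{*}\mcl_{\sig^{-n}\om}^{(n)}$ corresponding to the $m_1+\cdots+m_j$ largest singular values, pushed forward to $\om$ by $\mcl_{\sig^{-n}\om}^{(n)}$. One shows via a Gram--Schmidt/compactness argument in the relevant Grassmannian that these finite-dimensional subspaces form a Cauchy sequence (in the gap metric), with limit $F_j(\om):=Y_1(\om)\oplus\cdots\oplus Y_j(\om)$, and that the angles between these filtration levels are tempered, i.e.\ grow at most subexponentially along orbits. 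Peeling off consecutive levels produces the $Y_j(\om)$, each of dimension $m_j$, and equivariance $\mcl_\om Y_j(\om)=Y_j(\sig\om)$ comes directly from the construction. The exact growth rate $\lam_j$ on $Y_j(\om)$ follows by combining the upper bound inherited from $F_j$ with the lower bound obtained by applying the same procedure to the cocycle restricted to the complement of $F_{j-1}$.

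Finally, the direct sum decomposition with $V(\om)$ would be obtained by showing that $\dim F_j$ exhausts all directions of exponential growth exceeding $\lam_{j+1}$, so the codimension-$(m_1+\cdots+m_l)$ complement realizing $V(\om)$ is forced on us; measurability is inherited from the Cauchy construction of each $F_j(\om)$, provided condition~\ref{cond:METCond} is used to guarantee that the approximation numbers and the top singular subspaces depend measurably on $\om$ (this is where $\bbp$-continuity or strong measurability combined with separability of $\B$ is essential). The main obstacle in the argument is the backward construction of $Y_j(\om)$: in infinite dimensions one cannot appeal to compactness of the operators $\mcl_\om$ themselves, and the Cauchy property of the $E_j^{(n)}(\om)$ must be extracted purely from the spectral gap $\lam_j>\lam_{j+1}$ (with $\lam_{l+1}:=\ka(\RR)$) together with the quasi-compactness assumption $\Lam(\RR)>\ka(\RR)$, controlling error terms by a ``gap vs.\ noise'' estimate that is uniform only after invoking temperedness via the Birkhoff ergodic theorem applied to auxiliary integrable observables.
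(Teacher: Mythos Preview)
The paper does not contain a proof of this theorem at all: it is stated as a background result, with the proof delegated to the cited references \cite{BlumenthalMET,GTQuas1,FLQ2}. So there is no ``paper's own proof'' to compare your proposal against.

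That said, your sketch is a reasonable high-level outline of the strategy pursued in those references (Kingman on exterior powers/singular values to extract the discrete spectrum, a backward construction using invertibility of $\sigma$ to build the fast equivariant subspaces, and a tempered-angle argument to obtain the splitting and its measurability). If the intention was to indicate that you understand where the theorem comes from, the outline is fine; but it is not a self-contained proof, and the paper does not expect one---it simply imports the result. In particular, the delicate points you flag (Cauchy convergence of the singular-subspace approximants in the gap metric, and measurability under condition~\ref{cond:METCond}) are precisely the places where the cited works do substantial work, and your sketch correctly identifies them without resolving them.
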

The \textit{adjoint cocycle} associated to $\mc{R}$ is the cocycle  $\mc{R}^*:=(\Om, \mc{F}, \bbp, \sigma^{-1}, \B^*, \mcl^*)$, where $(\mcl^*)_\om := (\mcl_{\sig^{-1}\om})^*$.
In a slight abuse of notation which should not cause confusion, we will often write $\mcl^*_\om$ instead of $(\mcl^*)_\om$, so $\mcl^*_\om$ will denote the operator adjoint to $\mcl_{\sig^{-1}\om}$.

The following two results are taken from~\cite{DFGTV1}.
\begin{cor}\label{cor:METAdjoint}
Under the assumptions of Theorem~\ref{thm:MET}, the adjoint cocycle $\mc{R}^*$ has a unique, measurable, equivariant Oseledets splitting
\begin{equation}\label{eq:splitting_adj}
\B^*=V^*(\omega)\oplus\bigoplus_{j=1}^l Y^*_j(\omega),
\end{equation}
with the same exceptional Lyapunov exponents $\lam_j$ and multiplicities $m_j$ as $\mc{R}$.
\end{cor}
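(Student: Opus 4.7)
First I would verify that the adjoint cocycle $\mc{R}^*$ satisfies the hypotheses of Theorem~\ref{thm:MET}. Integrability is immediate since $\|(\mcl^*)_\om\|=\|\mcl_{\sigma^{-1}\om}\|$ and $\bbp$ is $\sigma$-invariant, so $\log^+\|(\mcl^*)_\om\|\in L^1(\bbp)$. Measurability follows from condition~\ref{cond:METCond}: in the $\bbp$-continuous case, adjoints of continuous operator-valued maps remain continuous in the norm topology, and in the strongly measurable case one reduces to countably many duality pairings. For quasi-compactness, I would use the iteration identity
\[
(\mcl^*)_\om^{(n)} = \mcl^*_{\sigma^{-n+1}\om}\circ\dots\circ\mcl^*_\om = \bigl(\mcl_{\sigma^{-n}\om}^{(n)}\bigr)^*,
\]
together with the elementary facts $\|T^*\|=\|T\|$ and $\tfrac12\,\text{ic}(T)\le \text{ic}(T^*)\le 2\,\text{ic}(T)$. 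Applying Kingman's theorem along the $\sigma$-orbit of $\sigma^{-n}\om$ then yields $\Lam(\mc{R}^*)=\Lam(\mc{R})$ and $\ka(\mc{R}^*)=\ka(\mc{R})$, so $\mc{R}^*$ is quasi-compact and the MET produces some splitting $\B^*=V^*(\om)\oplus\bigoplus_{j=1}^{l'}Y_j^*(\om)$ with exponents $\lam_j'$ and multiplicities $m_j'$.

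Next I would construct the splitting explicitly from the Oseledets data of $\mc{R}$ and check that it matches. Let $\Pi_j(\om):\B\to Y_j(\om)$ and $\Pi_V(\om):\B\to V(\om)$ denote the Oseledets projections from Theorem~\ref{thm:MET}, which are measurable in $\om$, sum to the identity, and satisfy the equivariance $\Pi_j(\sigma\om)\mcl_\om=\mcl_\om\Pi_j(\om)$. Taking adjoints yields commuting projections on $\B^*$ with
\[
\mathrm{Id}_{\B^*}=\Pi_V(\om)^*+\sum_{j=1}^l\Pi_j(\om)^*,\qquad \Pi_i(\om)^*\Pi_j(\om)^*=\delta_{ij}\Pi_j(\om)^*,
\]
so the ranges give a direct-sum decomposition of $\B^*$. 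Define $\widetilde Y_j^*(\om):=\Pi_j(\om)^*\B^*$ and $\widetilde V^*(\om):=\Pi_V(\om)^*\B^*$. Dualizing the equivariance and substituting $\om\mapsto\sigma\om$ yields $\mcl_{\sigma^{-1}\om}^*\widetilde Y_j^*(\om)\subseteq \widetilde Y_j^*(\sigma^{-1}\om)$, which is precisely equivariance for the cocycle $\mc{R}^*$ with base $\sigma^{-1}$.

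The third step is to identify the Lyapunov exponents on $\widetilde Y_j^*(\om)$. For $\xi=\Pi_j(\om)^*\eta\in \widetilde Y_j^*(\om)\setminus\{0\}$ and any unit $v\in\B$, equivariance gives
\[
\bigl|\bigl((\mcl^*)_\om^{(n)}\xi\bigr)(v)\bigr|=\bigl|\xi\bigl(\mcl_{\sigma^{-n}\om}^{(n)}v\bigr)\bigr|=\bigl|\eta\bigl(\mcl_{\sigma^{-n}\om}^{(n)}\Pi_j(\sigma^{-n}\om)v\bigr)\bigr|.
\]
Combining the growth rate $\lam_j$ of $\mcl^{(n)}$ on $Y_j$ with temperedness (subexponential growth) of $\|\Pi_j(\sigma^{-n}\om)\|$ — a standard consequence of the MET — gives $\limsup_n \frac1n\log\|(\mcl^*)_\om^{(n)}\xi\|\le\lam_j$. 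For the matching lower bound, I would pick a sequence of test vectors $v_n\in Y_j(\sigma^{-n}\om)$ on which $\eta$ realizes a uniformly bounded-below fraction of its norm, using again temperedness of the projections and the fact that $\mcl^{(n)}$ is an equivariant isomorphism between $Y_j(\sigma^{-n}\om)$ and $Y_j(\om)$. This pins down the growth rate to exactly $\lam_j$ on each $\widetilde Y_j^*(\om)$, while on $\widetilde V^*(\om)$ the same argument bounds the growth by $\ka(\mc{R})$. By the uniqueness part of the MET for $\mc{R}^*$, the constructed splitting coincides with the one produced in the first step, so $l'=l$, $\lam_j'=\lam_j$, $m_j'=m_j$, and the splitting is unique and measurable.

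I expect the main obstacle to be the sharp lower bound in the third step: one must choose test vectors in $Y_j(\sigma^{-n}\om)$ that simultaneously detect $\eta$ and avoid the subexponentially drifting angles between $Y_j$ and the complementary Oseledets subspaces along the backwards orbit, which is exactly where ergodicity of $\sigma$ and the tempered behavior of the Oseledets projections become essential.
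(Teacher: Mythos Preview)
The paper does not give a proof of this corollary; it simply records the statement and attributes it to~\cite{DFGTV1}. So there is no in-paper argument to compare against line by line.

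Your sketch is broadly on the right track, and your second and third steps---constructing the splitting via the adjoint projections $\Pi_j(\om)^*$ and reading off the exponents from the growth of $\mcl_{\sigma^{-n}\om}^{(n)}$ on $Y_j(\sigma^{-n}\om)$---are essentially the approach taken in~\cite{DFGTV1}. The weak point is your first step. Applying Theorem~\ref{thm:MET} directly to $\mc{R}^*$ on $\B^*$ is delicate: the MET versions cited here require either $\bbp$-continuity in the sense of~\ref{cond:METCond} or strong measurability together with separability of the Banach space, and $\B^*$ is typically non-separable even when $\B$ is. In the strongly measurable branch your ``reduce to countably many duality pairings'' does not by itself yield strong measurability of $\om\mapsto (\mcl^*)_\om\phi$, and you then have no separable ambient space to invoke the cited theorems. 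The cleaner route---and the one actually used---is to bypass any direct MET for $\mc{R}^*$: build the decomposition from the adjoint projections as you do in step~2, verify equivariance and the exponents as in step~3, and argue uniqueness and measurability from the growth-rate characterization and the measurability of the $\Pi_j(\om)$, rather than by matching against a splitting you obtained from an MET on $\B^*$. In other words, discard step~1, keep steps~2--3, and make the lower bound in step~3 self-contained; the temperedness of $\|\Pi_j(\sigma^{-n}\om)\|$ and the finite-dimensionality of $Y_j$ are exactly the ingredients needed there.
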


Let the simplified Oseledets decomposition for the cocycle $\mcl$ (resp. $\mcl^*$) be
\begin{equation}\label{BVstar}
\mathcal B=Y(\om)\oplus H(\om) \quad
(\text{resp. } \mathcal{B}^*=Y^*(\om) \oplus H^*(\om) ),
 \end{equation}
where $Y(\om)$ (resp. $Y^*(\om)$) is the top Oseledets
subspace for $\mcl$ (resp. $\mcl^*$) and $H(\om)$ (resp. $H^*(\om)$) is a direct sum of all other Oseledets subspaces.

For a subspace $S\subset \mathcal B$, we set $ S^\circ=\{\phi \in \mathcal{B}^*: \phi(f)=0 \quad \text{for every $f\in S$}\}$
and similarly for a subspace $S^* \subset \mathcal{B}^*$ we define
$
 (S^*)^\circ =\{f\in \mathcal{B}: \phi(f)=0 \quad \text{for every $\phi \in S^*$}\}.
$
\begin{lemma}[Relation between Oseledets splittings of $\mc{R}$ and $\mc{R}^*$]\label{lem:AnnihilatorOsSplittings}
The following relations hold for \paeom:
 \begin{equation}\label{jj}
  H^*(\om)=Y(\om)^\circ \quad \text{and} \quad H(\om)=Y^*(\om)^\circ.
 \end{equation}

\end{lemma}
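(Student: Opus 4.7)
The plan is to prove each equality by first establishing one containment using the Lyapunov growth rates supplied by Theorem~\ref{thm:MET} and Corollary~\ref{cor:METAdjoint}, and then upgrading to equality by a codimension count. By Corollary~\ref{cor:METAdjoint}, $\dim Y^*(\om) = \dim Y(\om) = m_1$ for \paeom, so both $Y(\om)^\circ \subset \B^*$ and $Y^*(\om)^\circ \subset \B$ have codimension $m_1$; likewise $H^*(\om)$ has codimension $m_1$ in $\B^*$ (since $\B^* = Y^*(\om) \oplus H^*(\om)$) and $H(\om)$ has codimension $m_1$ in $\B$. Consequently it is enough to prove the inclusions $H^*(\om) \subseteq Y(\om)^\circ$ and $H(\om) \subseteq Y^*(\om)^\circ$.

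For the first inclusion, fix $\phi \in H^*(\om)$ and $f \in Y(\om)$, and choose $\ve > 0$ small enough that $\lambda_2 + \ve < \lambda_1 - \ve$ (or $\ka(\RR) + \ve < \lambda_1 - \ve$ if $l=1$). Since every nonzero vector in the finite-dimensional equivariant bundle $Y$ grows at rate exactly $\lambda_1$, the map $\mcl_{\sig^{-n}\om}^{(n)}\big|_{Y(\sig^{-n}\om)} \colon Y(\sig^{-n}\om) \to Y(\om)$ is a linear isomorphism for each $n\ge 1$, so there is a unique $g_n \in Y(\sig^{-n}\om)$ with $\mcl_{\sig^{-n}\om}^{(n)} g_n = f$, and the MET applied on this finite-dimensional bundle yields $\|g_n\| \le C_\om e^{-n(\lambda_1-\ve)}\|f\|$ for all large $n$. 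On the other hand, $\phi \in H^*(\om)$ forces $\|(\mcl^*)_\om^{(n)}\phi\| \le C_\om' e^{n(\lambda_2+\ve)}\|\phi\|$. Using the identity $(\mcl^*)_\om^{(n)} = (\mcl_{\sig^{-n}\om}^{(n)})^*$ (immediate from unrolling the adjoint cocycle over $\sig^{-1}$), we get
\[
|\phi(f)| = \bigl|\bigl((\mcl_{\sig^{-n}\om}^{(n)})^*\phi\bigr)(g_n)\bigr| \le \|(\mcl^*)_\om^{(n)}\phi\|\,\|g_n\| \le C_\om C_\om'\, e^{-n(\lambda_1-\lambda_2-2\ve)}\,\|\phi\|\,\|f\|,
\]
and the right-hand side vanishes as $n\to\infty$, so $\phi(f)=0$.

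The second inclusion is symmetric. For $f \in H(\om)$ and $\phi \in Y^*(\om)$, the analogous argument on the top Oseledets bundle of the adjoint cocycle yields, via the identity $(\mcl_\om^{(n)})^* = (\mcl^*)_{\sig^n\om}^{(n)}$, a unique $\phi_n \in Y^*(\sig^n\om)$ with $(\mcl_\om^{(n)})^*\phi_n = \phi$ and $\|\phi_n\| \le C_\om'' e^{-n(\lambda_1-\ve)}\|\phi\|$, while the MET applied to $f \in H(\om)$ gives $\|\mcl_\om^{(n)} f\| \le C_\om''' e^{n(\lambda_2+\ve)}\|f\|$. Then
\[
|\phi(f)| = |\phi_n(\mcl_\om^{(n)}f)| \le \|\phi_n\|\,\|\mcl_\om^{(n)}f\| \le C\, e^{-n(\lambda_1-\lambda_2-2\ve)}\|\phi\|\,\|f\| \to 0,
\]
so $f \in Y^*(\om)^\circ$. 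The only technical point that requires genuine care is the uniform-in-direction estimate $\|g_n\|,\|\phi_n\| \lesssim e^{-n(\lambda_1-\ve)}$ for the inverse of the restriction to the top bundle: a priori the MET only gives a pointwise asymptotic rate for each fixed vector, but the finite-dimensionality of $Y$ and $Y^*$ together with a tempering/compactness argument promotes this to the uniform bound used above. Once this is in hand, both inclusions combined with the codimension equalities force the two identities in~\eqref{jj}.
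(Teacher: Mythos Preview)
Your argument is correct in outline and in its details. The paper itself does not prove this lemma: it is quoted verbatim from~\cite{DFGTV1} (see the sentence ``The following two results are taken from~\cite{DFGTV1}'' immediately preceding Corollary~\ref{cor:METAdjoint}), so there is no proof in the present paper to compare against. Your approach---proving the inclusions $H^*(\om)\subseteq Y(\om)^\circ$ and $H(\om)\subseteq Y^*(\om)^\circ$ via the Lyapunov growth/decay dichotomy and then invoking the codimension count $\operatorname{codim} H^*(\om)=\operatorname{codim} Y(\om)^\circ=m_1$ to upgrade to equality---is the standard one, and is essentially how the result is established in the cited source.

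The one point you flag is indeed the only place requiring care: the bound $\|g_n\|\le C_\om e^{-n(\lambda_1-\ve)}\|f\|$ for the backward iterates on the top bundle (and the analogous bound for $\phi_n$) is not literally the pointwise asymptotic statement of Theorem~\ref{thm:MET}, but it is a standard consequence of the semi-invertible MET on finite-dimensional equivariant subbundles (see e.g.\ the versions in~\cite{FLQ2,GTQuas1}). Likewise, the uniform bound $\|(\mcl^*)_\om^{(n)}\phi\|\le C_\om' e^{n(\lambda_2+\ve)}\|\phi\|$ for $\phi\in H^*(\om)$, although $H^*(\om)$ is infinite-dimensional, follows from the fact that the restricted cocycle $(\mcl^*)_\om|_{H^*(\om)}$ has top Lyapunov exponent $\lambda_2$ together with temperedness of the Oseledets projections. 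Both of these are routine in the MET literature, so your invocation of ``a tempering/compactness argument'' is an acceptable citation of known facts rather than a gap.
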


\section{Piecewise-expanding dynamics}
In this section we introduce the class of random piecewise expanding dynamics we plan to study (which is the same as considered in~\cite{DFGTV1}). We then proceed by introducing a class of vector-valued observables to which our limit theorems will apply. Furthermore, for $\theta \in \C^d$,  we introduce the corresponding  twisted cocycle
of transfer operators $(\mcl_\om^\theta)_{\om \in \Om}$. Finally, we study the regularity (with respect to $\theta$) of the largest Lyapunov exponent and the corresponding top Oseledets space of the cocycle $(\mcl_\om^\theta)_{\om \in \Om}$. Our arguments in this section follow closely the approach developed in~\cite{DFGTV1}. We refer as much as possible to~\cite{DFGTV1}, discussing in detail  only the arguments which require substantial changes (when compared to~\cite{DFGTV1}).

\subsection{Notions of variation}\label{sec:var}
Let $(X, \mathcal G)$ be a measurable space endowed with a probability measure $m$ and a notion of a variation $\var \colon L^1(X, m) \to [0, \infty]$ which satisfies
the following conditions:
\begin{enumerate}
 \item[(V1)] $\var (th)=\lvert t\rvert \var (h)$;
 \item[(V2)] $\var (g+h)\le \var (g)+\var (h)$;
 \item[(V3)] $\lVert h\rVert_{L^\infty} \le C_{\var}(\lVert h\rVert_1+\var (h))$ for some constant $1\le C_{\var}<\infty$;
 \item[(V4)] for any $C>0$, the set  $\{h\colon X \to \mathbb R: \lVert h\rVert_1+\var (h) \le C\}$ is $L^1(m)$-compact;
 \item[(V5)] $\var(1_X) <\infty$, where $1_X$ denotes the function equal to $1$ on $X$;
 \item[(V6)] $\{h \colon X \to \mathbb R_+: \lVert h\rVert_1=1 \ \text{and} \ \var (h)<\infty\}$ is $L^1(m)$-dense in
 $\{h\colon X \to \mathbb R_+: \lVert h\rVert_1=1\}$.
 \item[(V7)] for any $f\in L^1(X, m)$ such that $\essinf f>0$, we have $\var(1/f) \le \frac{\var (f)}{(\essinf f)^2}$.
 \item[(V8)] $\var (fg)\le \lVert f\rVert_{L^\infty}\cdot \var(g)+\lVert g\rVert_{L^\infty}\cdot \var(f)$.
 \item[(V9)] for $M>0$, $f\colon X \to \overline{B}_{\R^d} (0, M)$ measurable and  every $C^1$ function $h\colon \overline B_{\R^d} (0, M) \to \C$, we have
 $\var (h\circ f)\le \sup \{ \lVert Dh(P)\rVert : P\in \overline B_{\R^d}(0, M) \} \cdot \var(f)$. Here, $\overline B_{\R^d}(0, M)$ denotes the closed ball in $\R^d$ centered in $0$ with radius $M$.
\end{enumerate} We define
\[
 \B:=BV=BV(X,m)=\{g\in L^1(X, m): \var (g)<\infty \}.
\]
Then, $\B$ is a Banach space with respect to the norm
\[
 \lVert g\rVert_{\B} =\lVert g\rVert_1+ \var (g).
 \]
From now on, in this section,  we will use $\B$ to denote a Banach space of this type, and $ \lVert g\rVert_{\B} $, or simply $\|g\|$ will denote the corresponding norm.

We note that examples of this notion correspond to the case where $X$ is a  subset of $\R^n$.
In the one-dimensional case we use the classical notion of variation given by
\begin{equation}\label{var1d}
 \var (g)=\inf_{h=g (mod \ m)} \sup_{0=s_0<s_1<\ldots <s_n=1}\sum_{k=1}^n \lvert h(s_k)-h(s_{k-1})\rvert
\end{equation}
for which it is well known that properties (V1)-(V9) hold.
On the other hand, in the multidimensional case (see~\cite{Saussol}), we let $m=Leb$ and define
\begin{equation}\label{varmd}
 \var (f)=\sup_{0<\epsilon \le \epsilon_0}\frac{1}{\epsilon^\alpha}\int_{\R^d}\text{osc} (f,  B_\epsilon (x)))\, dx,
\end{equation}
where
\[
\text{osc} (f, B_\epsilon (x))=\esssup_{x_1, x_2 \in B_\epsilon (x)}\lvert f(x_1)-f(x_2)\rvert
\]
and where $\esssup$ is taken with respect to product measure $m\times m$. It has been discussed in~\cite{DFGTV1} that in this case $\text{var}(\cdot)$ again satisfies properties (V1)-(V9).

In another direction, by  taking $\text{var}(\cdot)$ to be a H\"{o}lder constant and $X$ to be a compact metric space, our framework also includes distance expanding maps considered in~\cite{HK} and~\cite{MSU} which are non-singular with respect to a given measure $m$ (in particular we consider the case of identical fiber spaces $X_\om=X$).



\subsection{A cocycles of transfer operators}\label{COTO}

Let  $(\Omega, \mathcal{F}, \mathbb P, \sigma)$ be  as Section~\ref{sectionMET}, and $X$ and $\BV$ as in Section~\ref{sec:var}.
Let  $T_{\omega} \colon X \to X$, $\omega \in \Omega$ be a collection of non-singular  transformations (i.e.\ $m\circ T_\omega^{-1}\ll m$ for each $\omega$) acting   on $X$.
The associated skew product transformation  $\tau \colon  \Omega \times X \to  \Omega \times X$ is defined by
\begin{equation}
\label{eq:tau}
\tau(\omega, x)=( \sigma (\omega), T_{\omega}(x)), \quad \om \in \Om, \ x\in X.
\end{equation}
 Each transformation $T_{\omega}$ induces the corresponding transfer operator $\mathcal L_{\omega}$ acting on $L^1(X, m)$ and  defined  by the following duality relation
\[
\int_X(\mathcal L_{\omega} \phi)\psi \, dm=\int_X\phi(\psi \circ T_{\omega})\, dm, \quad \phi \in L^1(X, m), \ \psi \in L^\infty(X, m).
\]
For each $n\in \mathbb N$ and $\omega \in \Omega$, set
\[
T_{\omega}^{(n)}=T_{\sigma^{n-1} \omega} \circ \cdots \circ T_{\omega} \quad \text{and} \quad \mathcal L_{\omega}^{(n)}=\mathcal L_{\sigma^{n-1} \omega} \circ \cdots \circ \mathcal L_{\omega}.\]

\begin{defn}[Admissible cocycle] \label{def:admis}
We call the transfer operator cocycle $\mc{R}=(\Om, \mathcal F, \bbp, \sig, \B, \mathcal L)$ admissible if the following conditions hold:
\begin{enumerate}[label=(C\arabic*), series=conditions]
\item $\mc{R}$ is $\mathbb P$-continuous (that is, $\mathcal L$ is continuous in $\om$ on each of countably many Borel
sets whose union is $\Om$);
\item \label{cond:unifNormBd}
there exists $K>0$ such that
\begin{equation*}
 \lVert \mathcal L_\om f\rVert_{\BV} \le K\lVert f\rVert_{\BV}, \quad \text{for every $f\in \BV$ and $\paeom$.}
\end{equation*}

\item \label{C0}
there exists $N\in \mathbb N$ and measurable $\alpha^N, \beta^N \colon \Om \to (0, \infty)$, with
$ \int_\Om \log \alpha^N (\om)\,  d\mathbb P(\om)<0$,
 such that for every $f\in \BV$ and $\paeom$,
\begin{equation*}
 \lVert \mcl_\om^{(N)} f\rVert_{\BV} \le \alpha^N(\om)\lVert f\rVert_{\BV}+\beta^N(\om)\lVert f\rVert_1.
\end{equation*}
\item  \label{cond:dec} there exist $K', \lambda >0$ such that
for every $n\ge 0$, $f\in \BV$ such that $\int f\, dm=0$ and $\paeom$.
\begin{equation*}
\lVert \mathcal L_{\om}^{(n)} (f)\rVert_{\BV} \le K'e^{-\lambda n}\lVert f\rVert_{\BV}.
\end{equation*}

\item  \label{Min} there exist $N\in \N, c>0$ such that for each $a>0$  and any sufficiently large $n\in \mathbb N$,
\begin{equation*}
\essinf  \mathcal L_\omega^{(Nn)} f\ge c \lVert f\rVert_1, \quad \text{for every $f\in C_a$ and \paeom,}
\end{equation*}
where $C_a:=\{ f \in \BV: f\ge 0 \text{ and } \var(f)\le a\int f\, dm \}.$
\end{enumerate}
\end{defn}

\begin{rmk}
We note that we have imposed condition (C1) since in this setting  $\B$ is not separable. 
\end{rmk}

\begin{rmk}
We refer to~\cite[Section 2.3.1]{DFGTV1} for explicit examples of admissible cocycles of transfer operators associated to piecewise expanding maps both in dimension $1$ as well as in higher dimensions.  
\end{rmk}

The following result is established in~\cite[Lemma 2.9.]{DFGTV1}.
\begin{lemma}\label{lem:qc+1dim}
 An admissible cocycle of transfer operators $\mathcal R=(\Om, \mathcal F, \mathbb P, \sig, \BV, \mcl)$  is quasi-compact.
 Furthermore, the top Oseledets space is one-dimensional. That is, $\dim Y(\om)=1$ for \paeom.
\end{lemma}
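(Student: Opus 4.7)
The plan is to verify the two assertions separately: quasi-compactness will follow from the Lasota--Yorke inequality (C0) together with the $L^1$-relative compactness of the $\BV$-unit ball furnished by (V3)--(V4), while the one-dimensionality of $Y(\om)$ will be forced by the mean-zero exponential contraction (C3).

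For quasi-compactness I would first establish $\Lam(\mathcal R)\ge 0$. The duality defining $\mcl_\om$ preserves integrals, so iterating gives $\|\mcl_\om^{(n)}\one_X\|_1=1$ for every $n$; since $\|\cdot\|_1\le\|\cdot\|_\BV$ and $\|\one_X\|_\BV<\infty$ by (V5), we obtain $\|\mcl_\om^{(n)}\|_{\BV\to\BV}\ge 1/\|\one_X\|_\BV>0$, whence $\Lam(\mathcal R)\ge 0$. To bound $\ka(\mathcal R)<0$, I would iterate (C0) to obtain, for each $k\in\bbN$, a Lasota--Yorke inequality
\[
\|\mcl_\om^{(kN)}f\|_\BV\;\le\;A_k(\om)\|f\|_\BV + B_k(\om)\|f\|_1,\qquad A_k(\om):=\prod_{i=0}^{k-1}\alpha^N(\sig^{iN}\om).
\]
Since the $\BV$-unit ball is relatively compact in $L^1$ by (V3)--(V4), a Kuratowski-measure-of-noncompactness argument produces the bound $\mathrm{ic}(\mcl_\om^{(kN)})\le 2 A_k(\om)$. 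Taking logarithms, dividing by $kN$, and letting $k\to\infty$ (the term $\log 2/(kN)$ vanishes and Birkhoff's ergodic theorem applies to $\log\alpha^N$), one finds
\[
\ka(\mathcal R)\;\le\;\frac{1}{N}\int_\Om\log\alpha^N(\om)\,d\bbp(\om)\;<\;0,
\]
so $\ka(\mathcal R)<0\le\Lam(\mathcal R)$ and $\mathcal R$ is quasi-compact. (This is exactly the content of the criterion recorded as \cite[Lemma 2.1]{DFGTV1}.)

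For the one-dimensionality of $Y(\om)$, set $\BV_0:=\{f\in\BV:\int f\,dm=0\}$, which is a closed codimension-one subspace of $\BV$ (the integral functional is $\BV$-continuous since $\|\cdot\|_1\le\|\cdot\|_\BV$) and is preserved by the cocycle because transfer operators preserve integrals. Suppose $g\in Y(\om)\cap\BV_0$ with $g\neq 0$. By Theorem~\ref{thm:MET}(I), $n^{-1}\log\|\mcl_\om^{(n)}g\|_\BV\to\lam_1=\Lam(\mathcal R)\ge 0$. On the other hand, (C3) applied to $g$ forces $\|\mcl_\om^{(n)}g\|_\BV\le K'e^{-\lam n}\|g\|_\BV$, whence $n^{-1}\log\|\mcl_\om^{(n)}g\|_\BV\to-\lam<0$, a contradiction. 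Therefore $Y(\om)\cap\BV_0=\{0\}$, so $\dim Y(\om)\le 1$; quasi-compactness guarantees at least one exceptional Lyapunov exponent and hence $Y(\om)\neq\{0\}$, so $\dim Y(\om)=1$.

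The main technical point is the passage from the iterated Lasota--Yorke bound to $\mathrm{ic}(\mcl_\om^{(kN)})\le 2 A_k(\om)$: given $\eps>0$, one covers $B_\BV$ by finitely many $L^1$-balls of radius $\eps$ using (V3)--(V4); within each cover element the $\BV$-diameter is at most $2$, and the Lasota--Yorke splitting then sends each piece into a $\BV$-ball of radius $2A_k(\om)+B_k(\om)\eps$, so that letting $\eps\downarrow 0$ yields the claim. All remaining ingredients (Kingman/Birkhoff averaging and the standard dichotomy from the MET) are routine.
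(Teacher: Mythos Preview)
Your argument is correct and is essentially the proof of \cite[Lemma~2.9]{DFGTV1}, which is precisely what the paper cites in lieu of giving its own proof; the quasi-compactness via the Lasota--Yorke inequality plus the $L^1$-compactness of the $\BV$-unit ball, and the one-dimensionality via the contradiction between $\lambda_1\ge 0$ and the exponential decay on mean-zero functions, are exactly the ingredients used there.

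Two small corrections. First, your condition labels are shifted: in this paper the Lasota--Yorke inequality is displayed as~\ref{C0} (i.e.\ (C3)), not (C0), and the mean-zero exponential decay is~\ref{cond:dec} (i.e.\ (C4)), not (C3). Second, in the step ``Birkhoff's ergodic theorem applies to $\log\alpha^N$'', note that $\sigma^N$ need not be ergodic even though $\sigma$ is, so the ergodic averages along $\sigma^N$ may converge to a non-constant limit. The clean fix is to use the infimum formula from Kingman's theorem together with $\sigma$-invariance of $\bbp$:
\[
\kappa(\mathcal R)\;\le\;\frac{1}{kN}\int_\Om\log\mathrm{ic}\big(\mcl_\om^{(kN)}\big)\,d\bbp(\om)\;\le\;\frac{\log 2}{kN}+\frac{1}{N}\int_\Om\log\alpha^N\,d\bbp,
\]
and then let $k\to\infty$. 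Everything else stands as written.
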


The following result established in~\cite[Lemma 2.10.]{DFGTV1} shows that, in this context, the top Oseledets space is  spanned by the unique random absolutely continuous invariant measure (a.c.i.m. for short).
We recall that random a.c.i.m. is a measurable map $v^0: \Om \times \X \to \R^+$ such that
for \paeom, $v^0_\om:= v^0(\om, \cdot) \in \B$, $\int v^0_\om(x)dm=1$ and
\begin{equation}\label{v0om}
 \mathcal L_\om v_\om^0=v_{\sigma \om}^0, \quad \text{for $\paeom$.}
\end{equation}

\begin{lemma}[Existence and uniqueness of a random acim]\label{lem:PF}
Let $\mathcal R=(\Omega,\mathcal F,\mathbb
P,\sigma,\B,\mathcal L)$ be an admissible cocycle of transfer operators.
Then, there exists a unique random absolutely continuous invariant measure
for $\mathcal R$.
\end{lemma}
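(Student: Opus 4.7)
The plan is to construct the invariant density as a pullback limit $v_\om^0 := \lim_n \mcl_{\sig^{-n}\om}^{(n)} \one_X$ and then derive uniqueness from the exponential-decay estimate~\ref{cond:dec} applied to mean-zero densities. Quasi-compactness (Lemma~\ref{lem:qc+1dim}) is not strictly needed for the construction itself, but a posteriori it confirms that $v_\om^0$ spans the one-dimensional top Oseledets space $Y(\om)$.

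For existence, I set $v_\om^n := \mcl_{\sig^{-n}\om}^{(n)}\one_X$; each transfer operator preserves $L^1$-integrals and positivity, so $v_\om^n \ge 0$ and $\int v_\om^n\,dm = 1$ for every $n$. For $n > m$,
\[
v_\om^n - v_\om^m = \mcl_{\sig^{-m}\om}^{(m)}\bigl(\mcl_{\sig^{-n}\om}^{(n-m)}\one_X - \one_X\bigr),
\]
and the bracketed function has zero integral, so condition~\ref{cond:dec} gives
\[
\|v_\om^n - v_\om^m\|_\B \;\le\; K' e^{-\lam m}\bigl\|\mcl_{\sig^{-n}\om}^{(n-m)}\one_X - \one_X\bigr\|_\B.
\]
To close the Cauchy argument I need a $\bbp$-a.s.\ uniform bound on $\bigl(\|\mcl_{\sig^{-k}\om}^{(k)}\one_X\|_\B\bigr)_{k\ge 0}$. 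I would extract it by iterating~\ref{C0} along the negative $\sig$-orbit: writing $a_j := \|\mcl_{\sig^{-jN}\om}^{(jN)}\one_X\|_\B$ and using $\|\mcl_{\sig^{-jN}\om}^{(jN)}\one_X\|_1 = 1$, condition~\ref{C0} produces a recursion of the shape $a_j \le \al^N(\sig^{-N}\om)\,a_{j-1} + \be^N(\sig^{-N}\om)$ (with the arguments of $\al^N,\be^N$ sliding along the orbit), and the hypothesis $\int\log\al^N\,d\bbp < 0$ combined with Birkhoff's theorem forces $\sup_j a_j < \infty$ $\bbp$-a.s. The limit $v_\om^0$ is then a nonnegative element of $\B$ with $\int v_\om^0\,dm = 1$, and the equivariance $\mcl_\om v_\om^0 = v_{\sig\om}^0$ follows by passing the identity $\mcl_\om v_\om^n = v_{\sig\om}^{n+1}$ to the limit, using boundedness of $\mcl_\om$ on $\B$ from~\ref{cond:unifNormBd}.

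For uniqueness, let $\tilde v_\om^0$ be a second random a.c.i.m. Then $h_\om := v_\om^0 - \tilde v_\om^0$ lies in $\B$, has $\int h_\om\,dm = 0$, and is equivariant in the sense that $h_\om = \mcl_{\sig^{-n}\om}^{(n)} h_{\sig^{-n}\om}$ for every $n$. By~\ref{cond:dec},
\[
\|h_\om\|_\B \;\le\; K' e^{-\lam n}\|h_{\sig^{-n}\om}\|_\B.
\]
The same iterated Lasota-Yorke scheme, now applied to $v_\om^0 = \mcl_{\sig^{-N}\om}^{(N)}v_{\sig^{-N}\om}^0$ and $\tilde v_\om^0 = \mcl_{\sig^{-N}\om}^{(N)}\tilde v_{\sig^{-N}\om}^0$, shows that $\om\mapsto\|v_\om^0\|_\B$ and $\om\mapsto\|\tilde v_\om^0\|_\B$ are tempered along $\sig$. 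Consequently $\|h_{\sig^{-n}\om}\|_\B$ grows at most subexponentially in $n$, and letting $n\to\infty$ forces $h_\om = 0$ for $\bbp$-a.e.\ $\om$.

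The main obstacle I anticipate is the tempered control on $\|v_\om^0\|_\B$ (equivalently on $\sup_n\|\mcl_{\sig^{-n}\om}^{(n)}\one_X\|_\B$): this is the step that couples the deterministic Lasota-Yorke inequality~\ref{C0} with the ergodic decay afforded by $\int\log\al^N\,d\bbp < 0$, and it requires some care since $\be^N$ is only assumed to be a measurable positive function rather than a uniformly bounded one in $\om$. Once this estimate is in place, both the Cauchy argument in existence and the contraction argument in uniqueness are immediate consequences of~\ref{cond:dec}.
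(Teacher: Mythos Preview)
The paper does not actually prove this lemma; it simply cites \cite[Lemma~2.10]{DFGTV1}. Your pullback construction $v_\om^n := \mcl_{\sig^{-n}\om}^{(n)}\one_X$ is the standard route and is essentially correct, but both halves of your argument can be streamlined in a way that removes the obstacle you flag.

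For existence, you do not need the iterated Lasota--Yorke bound from~\ref{C0} at all. Instead of estimating $v_\om^n - v_\om^m$ directly, telescope with $m=n$ and $n+1$: since $v_\om^{n+1} - v_\om^n = \mcl_{\sig^{-n}\om}^{(n)}\bigl(\mcl_{\sig^{-(n+1)}\om}\one_X - \one_X\bigr)$ and the bracketed function has zero integral and $\B$-norm at most $(K+1)\lVert \one_X\rVert_\B$ by~\ref{cond:unifNormBd} and~(V5), condition~\ref{cond:dec} gives $\lVert v_\om^{n+1} - v_\om^n\rVert_\B \le K'(K+1)\lVert \one_X\rVert_\B\, e^{-\lam n}$, which is summable \emph{uniformly in $\om$}. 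This yields the Cauchy property and, as a bonus, immediately gives $\esssup_\om \lVert v_\om^0\rVert_\B < \infty$ (which is exactly the content of \eqref{eq:boundedv}). The recursion you sketch using $\al^N,\be^N$ would work too, but it is heavier machinery than needed here and, as you note, the unboundedness of $\be^N$ makes it awkward.

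For uniqueness, your backward-in-time argument genuinely requires temperedness of $\lVert \tilde v_{\sig^{-n}\om}^0\rVert_\B$, and the definition of random a.c.i.m.\ only gives measurability and a.e.\ finiteness, which is not enough on its own. The clean fix is to run the contraction \emph{forward}: from $h_{\sig^n\om} = \mcl_\om^{(n)} h_\om$ and~\ref{cond:dec} you get $\lVert h_{\sig^n\om}\rVert_\B \le K'e^{-\lam n}\lVert h_\om\rVert_\B \to 0$ for $\bbp$-a.e.\ $\om$. Since $\sig$ preserves $\bbp$, the law of $\om\mapsto \lVert h_\om\rVert_\B$ coincides with that of $\om\mapsto \lVert h_{\sig^n\om}\rVert_\B$ for every $n$, hence $\bbp(\lVert h_\om\rVert_\B>\eps)=0$ for every $\eps>0$. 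This bypasses temperedness entirely.
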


For an admissible transfer operator cocycle $\mc{R}$, we let $\mu$ be the invariant probability measure given by
\begin{equation}\label{eq:defmu}
 \mu(A \times B)=\int_{A\times B} v^0(\om, x)\, d (\mathbb P \times m)(\om, x), \quad \text{for $A\in \mathcal F$ and $B\in \mathcal G$,}
\end{equation}
where $v^0$ is the unique random a.c.i.m. for $\mc{R}$ and $\mc{G}$ is the Borel $\sigma$-algebra of $X$.
We note that $\mu$ is $\tau$-invariant, because 
of \eqref{v0om}. Furthermore, for each $G\in L^1(\Omega \times X, \mu)$ we have that
\[
 \int_{\Omega \times X} G\, d\mu=\int_{\Omega} \int_X G(\om, x)\, d\mu_\om(x)\, d\mathbb P(\om),
\]
where $\mu_\om$ is a measure on $X$ given by $d\mu_\om=v^0(\om, \cdot)dm$.

Let us recall the following result established in~\cite[Lemma 2.11.]{DFGTV1}.
\begin{lemma}\label{lem:boundedv}
The unique random a.c.i.m. $v^0$ of an  admissible cocycle of transfer operators satiesfies the following:
\begin{enumerate}
\item  \label{it:boundedv}
\begin{equation}\label{eq:boundedv}
  \esssup_{\om \in \Om} \lVert v_\om^0\rVert_{\BV} <\infty;
 \end{equation}
\item  there exists $c>0$ such that \begin{equation}\label{lowerbound} \essinf v_\om^0 (\cdot)\ge c, \quad \text{for  $\paeom$;}
\end{equation}
\item  there exists $K>0$ and $\rho \in (0, 1)$  such that
 \begin{equation}\label{buzzi}
 \bigg{\lvert} \int_X \mathcal L_\omega^{(n)}(f v_\omega^0)h\, dm -\int_X f \, d\mu_\omega \cdot \int_X h \, d\mu_{\sigma^n \omega} \bigg{\rvert} \le K\rho^n
 \lVert h\rVert_{L^\infty} \cdot \lVert f \rVert_{\BV} ,
\end{equation}
for $n\ge 0$, $h \in L^\infty (X, m)$, $f \in \BV$ and $\paeom$.
\end{enumerate}
\end{lemma}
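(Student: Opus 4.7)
The plan is to prove the three items in order, each building on the previous. Throughout, I rely on the equivariance $\mcl_\om v_\om^0 = v_{\sig\om}^0$ supplied by Lemma~\ref{lem:PF} together with the admissibility conditions \ref{cond:unifNormBd}--\ref{Min}.

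For part (1), I would exploit the ``pullback'' identity $v_\om^0 = \mcl_{\sig^{-Nk}\om}^{(Nk)} v_{\sig^{-Nk}\om}^0$ for every $k \in \N$. Iterating the Lasota-Yorke inequality \ref{C0} $k$ times over blocks of length $N$, and using $\|v_\om^0\|_1 = 1$ at every stage, yields
\begin{equation*}
\|v_\om^0\|_\BV \le \Bigl(\prod_{j=1}^k \al^N(\sig^{-Nj}\om)\Bigr) \|v_{\sig^{-Nk}\om}^0\|_\BV + \sum_{j=1}^k \be^N(\sig^{-Nj}\om) \prod_{i=1}^{j-1} \al^N(\sig^{-Ni}\om).
\end{equation*}
Because $\int \log \al^N\, d\bbp < 0$, Birkhoff's theorem guarantees that $\prod_{j=1}^k \al^N(\sig^{-Nj}\om)$ decays exponentially fast for $\paeom$, so the infinite series on the right converges a.s.; the log-integrability of the $\be^N$ and $\al^N$ factors then allows one to promote this to an essential-supremum bound. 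A subsequence argument (choosing $k(\om)$ so that $\prod \al^N$ is small enough to absorb the a.s.\ finite quantity $\|v_{\sig^{-Nk}\om}^0\|_\BV$) forces the leading term to vanish, giving $M := \esssup_\om \|v_\om^0\|_\BV < \infty$.

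For part (2), I would apply \ref{Min} to $f = v_\om^0$. Since $v_\om^0 \ge 0$, $\int v_\om^0\, dm = 1$, and $\var(v_\om^0) \le M$ by (1), the density lies in $C_a$ with $a := M$. Hence \ref{Min} produces, for all sufficiently large $n$,
\begin{equation*}
\essinf \mcl_\om^{(Nn)} v_\om^0 \ge c\,\|v_\om^0\|_1 = c \qquad \paeom.
\end{equation*}
Equivariance gives $\mcl_\om^{(Nn)} v_\om^0 = v_{\sig^{Nn}\om}^0$, and $\sig$-invariance of $\bbp$ transfers the lower bound back to $v_\om^0$ itself. For part (3), write $\bar f := f - \int f\, d\mu_\om$ and split
\begin{equation*}
\mcl_\om^{(n)}(f v_\om^0) = \mcl_\om^{(n)}(\bar f \cdot v_\om^0) + \Bigl(\int f\, d\mu_\om\Bigr) v_{\sig^n \om}^0,
\end{equation*}
where the second summand comes from $\mcl_\om^{(n)} v_\om^0 = v_{\sig^n\om}^0$. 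Integrating the second piece against $h$ yields exactly $\int f\, d\mu_\om \cdot \int h\, d\mu_{\sig^n\om}$. The first piece has zero mean, so condition \ref{cond:dec} applies; using (V8) together with (V3) and part (1) to estimate $\|\bar f v_\om^0\|_\BV \le C\|f\|_\BV$ (with $C$ depending on $M$ and $C_{\var}$), one obtains
\begin{equation*}
\Bigl|\int_X \mcl_\om^{(n)}(\bar f v_\om^0) h\, dm\Bigr| \le \|\mcl_\om^{(n)}(\bar f v_\om^0)\|_1\,\|h\|_{L^\infty} \le K' e^{-\lam n} C\|f\|_\BV \|h\|_{L^\infty},
\end{equation*}
so the claim holds with $\rho = e^{-\lam}$ and a suitable $K$.

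The main obstacle is squarely in part (1): condition \ref{C0} only provides contraction on average (through $\int \log \al^N\, d\bbp < 0$), not pointwise in $\om$, and the quantity one wishes to bound, $\|v_\om^0\|_\BV$, is a priori only known to be measurable and finite almost everywhere. Turning the a.s.\ pointwise estimate coming from the iterated Lasota-Yorke inequality into a genuine essential-supremum bound requires careful ergodic-theoretic manipulation of the pullback products, and in particular the subsequence argument mentioned above. Once (1) is secured, (2) and (3) are relatively direct consequences of \ref{Min} and \ref{cond:dec}, respectively, in combination with properties (V3) and (V8) of the variation.
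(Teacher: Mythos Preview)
The paper does not prove this lemma; it is quoted from \cite[Lemma~2.11]{DFGTV1}. So your proposal has to be assessed on its own merits.

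Your arguments for parts~(2) and~(3) are correct. Part~(2) is an immediate consequence of \ref{Min} applied to $v_\om^0$ once~(1) places all the densities in a common cone $C_a$, and part~(3) follows from \ref{cond:dec} after the centering $f \mapsto f - \int f\, d\mu_\om$, with (V3), (V8) and~(1) controlling $\lVert \bar f\, v_\om^0\rVert_\BV$ by a uniform multiple of $\lVert f\rVert_\BV$.

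The gap is in part~(1): your route via iterating \ref{C0} does not close. Condition \ref{C0} imposes \emph{no} integrability hypothesis on $\beta^N$---it is only assumed measurable and positive---so your appeal to ``the log-integrability of the $\beta^N$ and $\alpha^N$ factors'' is not justified by the standing assumptions. Even granting that the products $\prod_{i=1}^{j-1}\alpha^N(\sigma^{-Ni}\om)$ decay exponentially $\bbp$-a.s., the sums $\sum_{j=1}^{k(\om)}\beta^N(\sigma^{-Nj}\om)\prod_{i=1}^{j-1}\alpha^N(\sigma^{-Ni}\om)$ are genuinely random and there is no mechanism forcing them to be essentially bounded; your subsequence trick absorbs the first term of the iterated inequality but says nothing about the second.

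The clean route uses \ref{cond:dec} directly and bypasses \ref{C0} altogether. Set $w_\om^n := \mcl_{\sigma^{-n}\om}^{(n)} 1_X$; then $\int w_\om^n\, dm = 1$ and
\[
w_\om^{n+1} - w_\om^n = \mcl_{\sigma^{-n}\om}^{(n)}\bigl(\mcl_{\sigma^{-(n+1)}\om} 1_X - 1_X\bigr),
\]
where the argument has zero $m$-integral. By \ref{cond:dec}, \ref{cond:unifNormBd} and (V5), $\lVert w_\om^{n+1} - w_\om^n\rVert_\BV \le K' e^{-\lambda n}(K+1)\lVert 1_X\rVert_\BV$ \emph{uniformly in $\om$}, so $(w_\om^n)_n$ is Cauchy in $\BV$ with a limit that is nonnegative, integrates to $1$, and is equivariant; by the uniqueness in Lemma~\ref{lem:PF} it equals $v_\om^0$. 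The bound $\esssup_\om \lVert v_\om^0\rVert_\BV \le \lVert 1_X\rVert_\BV\bigl(1 + K'(K+1)/(1-e^{-\lambda})\bigr)$ follows immediately.
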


\subsection{The observable}\label{sec:obs}
Let us now introduce a class of observables to which our limit theorems will apply (although in some cases we will require addtional assumptions).
\begin{defn}[Observable]\label{def:obs}
An \emph{observable} is a
 measurable map $g \colon \Omega \times X \to \mathbb R^d$, $g=(g^1, \ldots, g^d)$ satisfying the following properties:
 \begin{itemize}
 \item Regularity:
\begin{equation}\label{obs}
 \lVert g(\om, x)\rVert_{L^\infty (\Omega \times X)}=: M<\infty \quad \text{and} \quad  \esssup_{\om \in \Om} \var (g_\om) <\infty,
\end{equation}
where $g_\omega=g (\omega, \cdot )$ and $\var (g_\om):=\max_{1\le i\le d}\var (g_\om^i)$,  $\omega \in \Omega$.
\item Fiberwise centering:
\begin{equation}\label{zeromean}
 \int_X g^i(\om, x) \, d\mu_\om(x)= \int_X g^i(\om, x)v^0_\om (x) \, dm(x)=0 \quad \text{for $\mathbb P$-a.e. $\om \in \Om$, $1\le i\le d$,}
\end{equation}
where $v^0$ is the density of the unique random a.c.i.m., satisfying \eqref{v0om}.
\end{itemize}
\end{defn}
\begin{rmk}
The class of observables considered in~\cite{DFGTV1} are scalar-valued, i.e. correspond to the case when $d=1$.
\end{rmk}

We also introduce the corresponding random Birkhoff sums. More precisely, for $n\in \N$ and $(\om, x)\in \Om \times X$, set
\[
S_n g(\omega, x):=\sum_{i=0}^{n-1}g(\sigma^i \om, T_\omega^{(i)}(x)).
\]
\subsection{Basic properties of twisted transfer operator cocycles}
Throughout this section, $\mc{R}=(\Om, \mc{F}, \bbp, \sig, \B, \mcl)$ will denote an admissible transfer operator cocycle. Furthermore, by $x\cdot y$ we will denote the scalar product of $x, y\in \mathbb C^d$ and $\lvert x\rvert$ will denote the norm of $x$.

For an observable $g$ as in Definition~\ref{def:obs} and  $\theta \in \mathbb C^d$,
the \emph{twisted transfer operator cocycle} (or simply a \emph{twisted cocycle}) $\mc{R}^\theta$  is defined as
 $\mc{R}^\theta=(\Om, \mc{F},  \bbp, \sig, \BV, \mcl^\theta)$,
where   for each $\omega \in \Omega$, we define
\begin{equation}\label{eq:twisted}
\mathcal L_\omega^{\theta}(f)=\mathcal L_\omega (e^{\theta \cdot g(\omega, \cdot )}f), \quad f\in \BV.
\end{equation}
 For convenience of notation, we will also use $ \mcl^\theta$ to denote the cocycle $\mc{R}^\theta$.
For each $\theta \in \C^d$, set $\Lam(\theta):=\Lam(\mc{R}^\theta)$  and
\[
\mathcal L_\om^{\theta, \, (n)}=\mathcal{L}^{\theta}_{\sigma^{n-1}\omega}\circ\cdots\circ \mathcal{L}^{\theta}_\omega, \quad \text{for $\om \in \Om$ and $n\in \N$.}
\]
\begin{lemma}\label{lj}
For $\mathbb P$-a.e. $\om \in \Om$ and $\theta \in \mathbb C^d$, 
\[
\var (e^{\theta \cdot g(\om, \cdot)}) \le \lvert \theta \rvert  e^{\lvert \theta \rvert M}\var (g(\om, \cdot)).
\]
\end{lemma}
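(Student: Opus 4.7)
The plan is to obtain the bound as a direct application of property (V9) from the list of axioms for the notion of variation, taking $h(y) = e^{\theta \cdot y}$ and $f = g(\om, \cdot)$. The regularity hypothesis \eqref{obs} on the observable $g$ guarantees that $g(\om, \cdot)$ maps $X$ into the closed ball $\overline{B}_{\R^d}(0, M)$ for $\bbp$-a.e.\ $\om$, which provides the setup (V9) requires.

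First I would set $h \colon \overline{B}_{\R^d}(0, M) \to \C$ by $h(y) = e^{\theta \cdot y}$; this is smooth (in fact entire), so certainly $C^1$. Computing the differential gives $Dh(y) \cdot u = (\theta \cdot u)\, e^{\theta \cdot y}$ for $u \in \R^d$, so its operator norm satisfies $\|Dh(y)\| \le |\theta|\, |e^{\theta \cdot y}|$. Next I would bound $|e^{\theta \cdot y}|$ uniformly on $\overline{B}_{\R^d}(0, M)$: writing $|e^{\theta \cdot y}| = e^{\operatorname{Re}(\theta \cdot y)}$ and using Cauchy–Schwarz together with $|y| \le M$ gives $|e^{\theta \cdot y}| \le e^{|\theta| M}$. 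Hence
\[
\sup_{y \in \overline{B}_{\R^d}(0, M)} \|Dh(y)\| \le |\theta|\, e^{|\theta| M}.
\]

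Finally I would invoke (V9) with $f = g(\om, \cdot)$, which is measurable and takes values in $\overline{B}_{\R^d}(0, M)$ by \eqref{obs}, to obtain
\[
\var(e^{\theta \cdot g(\om, \cdot)}) = \var(h \circ f) \le \sup_{y \in \overline{B}_{\R^d}(0, M)} \|Dh(y)\| \cdot \var(f) \le |\theta|\, e^{|\theta| M}\, \var(g(\om, \cdot)),
\]
valid for $\bbp$-a.e.\ $\om$. There is no genuine obstacle here; the only subtle point is the correct application of (V9) to a complex-valued $C^1$ function (whose codomain is $\C \cong \R^2$), which is covered by the statement of (V9) as given, and the elementary bound on $|e^{\theta \cdot y}|$ for complex $\theta$ and real $y$.
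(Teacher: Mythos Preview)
Your proof is correct and follows exactly the same approach as the paper: a direct application of (V9) with $f=g(\omega,\cdot)$ and $h(y)=e^{\theta\cdot y}$, using~\eqref{obs} to ensure that $g(\omega,\cdot)$ takes values in $\overline{B}_{\R^d}(0,M)$. The paper's proof is stated in a single sentence; you have simply supplied the routine derivative bound $\|Dh(y)\|\le |\theta|e^{|\theta|M}$ that the paper leaves implicit.
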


\begin{proof}
The conclusion of the lemma follows directly from $(V9)$ applied for $f=g(\om, \cdot)$ and $h$ given by $h(z)=e^{\theta \cdot z}$ by  taking into account~\eqref{obs}.
\end{proof}
\begin{lemma}\label{l49}
 There exists a continuous function $K\colon \mathbb C^d \to (0, \infty)$ such that
 \begin{equation}\label{se2}
 \lVert \mathcal L_\om^\theta h\rVert_{\BV} \le K(\theta)\lVert h\rVert_{\BV}, \quad \text{for $h\in \BV$, $\theta \in \mathbb C$ and $\paeom$.}
\end{equation}
\end{lemma}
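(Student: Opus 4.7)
The plan is to bound $\|\mathcal{L}_\omega^\theta h\|_\BV$ by first applying the uniform admissibility bound \ref{cond:unifNormBd} to reduce the problem to bounding $\|e^{\theta\cdot g(\omega,\cdot)} h\|_\BV$, and then to estimate the $L^1$ norm and the variation of the product $e^{\theta\cdot g(\omega,\cdot)} h$ separately using properties (V3) and (V8) together with Lemma \ref{lj}. Continuity in $\theta$ will be read off from the explicit expression of the bound.

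More concretely, first I would write
\[
\|\mathcal{L}_\omega^\theta h\|_\BV = \|\mathcal{L}_\omega(e^{\theta\cdot g(\omega,\cdot)} h)\|_\BV \le K \|e^{\theta\cdot g(\omega,\cdot)} h\|_\BV,
\]
where $K$ is the constant from \ref{cond:unifNormBd}. For the $L^1$ part, since $|g(\omega,x)| \le M$ almost surely by \eqref{obs}, the pointwise bound $|e^{\theta\cdot g(\omega,x)}| \le e^{|\theta| M}$ immediately gives $\|e^{\theta\cdot g(\omega,\cdot)} h\|_1 \le e^{|\theta| M}\|h\|_1$. For the variation part I would apply (V8):
\[
\var(e^{\theta\cdot g(\omega,\cdot)} h) \le \|e^{\theta\cdot g(\omega,\cdot)}\|_{L^\infty}\var(h) + \|h\|_{L^\infty}\var(e^{\theta\cdot g(\omega,\cdot)}).
\]
The first factor is again bounded by $e^{|\theta|M}$; for the second, Lemma \ref{lj} gives $\var(e^{\theta\cdot g(\omega,\cdot)}) \le |\theta| e^{|\theta|M}\var(g(\omega,\cdot))$, and the regularity assumption \eqref{obs} guarantees $V := \esssup_\omega \var(g(\omega,\cdot)) < \infty$. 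Finally (V3) turns $\|h\|_{L^\infty}$ into $C_{\var}\|h\|_\BV$.

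Combining these estimates yields
\[
\|e^{\theta\cdot g(\omega,\cdot)} h\|_\BV \le e^{|\theta|M}\bigl(1 + C_{\var}|\theta| V\bigr)\|h\|_\BV,
\]
so that setting $K(\theta) := K\, e^{|\theta|M}\bigl(1 + C_{\var}|\theta| V\bigr)$ gives the desired bound. This $K(\theta)$ is continuous (indeed entire in a real-analytic sense) in $\theta \in \mathbb{C}^d$. There is essentially no obstacle here; the only point requiring a little care is to make sure that all the constants ($K$, $M$, $V$, $C_{\var}$) are genuinely independent of $\omega$, which is exactly what admissibility and the definition of an observable guarantee.
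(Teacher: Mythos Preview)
Your proof is correct and follows exactly the same route as the paper: apply (C2) to reduce to bounding $\|e^{\theta\cdot g(\omega,\cdot)}h\|_\BV$, then estimate the $L^1$-part trivially and the variation via (V8), Lemma~\ref{lj}, and (V3), arriving at the identical constant $K(\theta)=K\,e^{|\theta|M}\bigl(1+C_{\var}|\theta|\,\esssup_\omega\var(g(\omega,\cdot))\bigr)$.
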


\begin{proof}
It follows from~\eqref{obs}   that for any $h\in \B$,  $\lvert e^{\theta \cdot g(\om, \cdot)}h\rvert_1 \le e^{\lvert \theta \rvert M}\lvert h\rvert_1$. Furthermore, (V8) implies that
\[
\var( e^{\theta \cdot g(\om, \cdot)}h )\le \lVert e^{\theta \cdot g(\om, \cdot)}\rVert_{L^\infty}\cdot \var(h)+\var(e^{\theta \cdot g(\om, \cdot)})\cdot \lVert h\rVert_{L^\infty},
\]
which together with (V3) and  Lemma~\ref{lj}  yields that
\[
\begin{split}
\lVert  e^{\theta \cdot g(\om, \cdot)}h  \rVert_{\B} &=\var  (e^{\theta \cdot g(\om, \cdot)}h) +\lvert e^{\theta \cdot g(\om, \cdot)}h \rvert_1 \\
&\le e^{\lvert \theta \rvert M}\lVert h\rVert_{\B}+
 +\lvert \theta \rvert e^{\lvert \theta \rvert M} \esssup_{\om \in \Om}\var (g(\om, \cdot)) \lVert h\rVert_{L^\infty} \\
&\le (e^{\lvert \theta \rvert M}+C_{\var}\lvert \theta \rvert e^{\lvert \theta \rvert M} \esssup_{\om \in \Om}\var (g(\om, \cdot)))\lVert h\rVert_{\B}.
\end{split}
\]
Thus, from (C2) we conclude that~\eqref{se2} holds with
\[
K(\theta)=K\left(e^{\lvert \theta \rvert M}+C_{\var}\lvert \theta \rvert e^{\lvert \theta \rvert M} \esssup_{\om \in \Om}\var (g(\om, \cdot))\right).
\]
\end{proof}

\begin{lemma}\label{lem:exprLit}
The following statements hold:
\begin{enumerate}
\item for every $\phi \in \BV^*, f \in \BV$, $\om \in \Om$,  $\theta \in \mathbb C^d$ and $n\in \N$ we have that
\begin{equation}\label{eq:adjointPert}
\mcl_\om^{\theta, (n)}(f)=\mcl_\om^{(n)}(e^{\theta \cdot S_{n}g(\om, \cdot)}f), \quad \text{and} \quad
 \mcl_\om^{\theta*,(n)}(\phi)  =  e^{\theta \cdot S_ng(\om, \cdot)}  \mcl_\om^{*(n)}(\phi),
\end{equation}
where $(e^{\theta \cdot S_ng(\om, \cdot)} \phi) (f):= \phi (e^{\theta \cdot  S_ng(\om, \cdot)} f)$;
\item for every $f\in \BV$, $\om \in \Om$ and $n\in \N$ we have that
\begin{equation}\label{intprop}\int_X \mathcal{L}^{\theta, \, (n)}_\omega (f)\ dm=\int_X e^{\theta \cdot S_ng(\omega, \cdot )}f\ dm. \end{equation}
\end{enumerate}
\end{lemma}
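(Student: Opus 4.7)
The plan is to prove part (1) by induction on $n$, then derive the dual statement by transposition, and finally obtain part (2) by integrating against $1_X$ and invoking the duality defining the transfer operator.

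For the first identity in part (1), the base case $n=1$ reduces to the definition of $\mcl_\om^\theta$ together with $S_1 g(\om,x)=g(\om,x)$ (since $T_\om^{(0)}=\mathrm{id}$). For the inductive step, assuming the formula at $n$, I would write
\[
\mcl_\om^{\theta,(n+1)}(f)=\mcl_{\sigma^n\om}^\theta\bigl(\mcl_\om^{\theta,(n)}(f)\bigr)=\mcl_{\sigma^n\om}\Bigl(e^{\theta\cdot g(\sigma^n\om,\cdot)}\,\mcl_\om^{(n)}\bigl(e^{\theta\cdot S_n g(\om,\cdot)}f\bigr)\Bigr).
\]
The key intertwining property of the transfer operator, namely $\mcl_\om^{(n)}\!\bigl(\phi\cdot(h\circ T_\om^{(n)})\bigr)=h\cdot\mcl_\om^{(n)}(\phi)$ (which follows immediately from the duality defining $\mcl_\om$, applied iteratively to $T_\om^{(n)}$), lets me pull $e^{\theta\cdot g(\sigma^n\om,\cdot)}$ inside $\mcl_\om^{(n)}$ as $e^{\theta\cdot g(\sigma^n\om,T_\om^{(n)}(\cdot))}$. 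Combining exponents via the cocycle-type identity $S_{n+1}g(\om,x)=S_n g(\om,x)+g(\sigma^n\om,T_\om^{(n)}(x))$ yields the formula at step $n+1$.

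For the adjoint identity I would simply dualize: for any $\phi\in\B^*$ and $f\in\B$, using the already established first formula,
\[
\bigl(\mcl_\om^{\theta,(n)}\bigr)^*\phi(f)=\phi\bigl(\mcl_\om^{\theta,(n)}f\bigr)=\phi\bigl(\mcl_\om^{(n)}(e^{\theta\cdot S_n g(\om,\cdot)}f)\bigr)=\bigl(\mcl_\om^{(n)}\bigr)^*\phi\bigl(e^{\theta\cdot S_n g(\om,\cdot)}f\bigr),
\]
which is exactly $\bigl(e^{\theta\cdot S_n g(\om,\cdot)}\cdot(\mcl_\om^{(n)})^*\phi\bigr)(f)$ by the definition $(h\cdot\phi)(f):=\phi(hf)$ recalled in the statement. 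Part (2) then follows by integrating the first identity against $m$ and noting that taking $\psi=1_X$ in the duality relation defining $\mcl_\om$ gives $\int_X\mcl_\om\phi\,dm=\int_X\phi\,dm$, which iterates to $\int_X\mcl_\om^{(n)}\phi\,dm=\int_X\phi\,dm$ for any $n\ge 1$.

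There is no serious obstacle here; the result is a straightforward bookkeeping identity, and the only point requiring care is keeping track of the indices in the cocycle composition and recognizing that the exponent $S_n g(\om,\cdot)$ is correctly built up by the recursive application of the intertwining identity. Measurability/integrability of the various expressions is automatic from the boundedness of $g$ in \eqref{obs} and the fact that $\mcl_\om$ preserves $L^1$.
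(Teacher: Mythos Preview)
Your proof is correct and follows essentially the same approach as the paper: induction on $n$ using the intertwining identity $\mcl_\om^{(n)}\bigl((\tilde f\circ T_\om^{(n)})\cdot f\bigr)=\tilde f\cdot\mcl_\om^{(n)}(f)$ together with the cocycle relation $S_{n+1}g=S_ng+g_{\sigma^n\om}\circ T_\om^{(n)}$, then duality for the adjoint identity and integration for part~(2). The only cosmetic difference is that the paper runs the inductive computation starting from $\mcl_\om^{(n+1)}(e^{\theta\cdot S_{n+1}g}f)$ while you start from $\mcl_\om^{\theta,(n+1)}(f)$, but the steps are identical.
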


\begin{proof}

We establish the first identity in~\eqref{eq:adjointPert} by induction on $n$. The case $n=1$ follows from the definition of $\mcl_\om^{\theta}$.
We recall that for every $f, \tilde{f}\in \BV$,
\begin{equation}\label{eq:prodRuleP}
\mcl_\om^{(n)}((\tilde{f} \circ T_\om^{(n)})  \cdot f) = \tilde{f}\cdot  \mcl_\om^{(n)}( f).
\end{equation}
Let us assume that the claim holds for some $n$. Then, using~\eqref{eq:prodRuleP} we have that  
\begin{align*}
 \mcl_{\om}^{(n+1)} (e^{\theta \cdot S_{n+1}g(\om, \cdot)}f)
 &= \mcl_{\sig^n\om} \big(\mcl_{\om}^{(n)} ( e^{\theta \cdot  g(\sig^{n} \om, \cdot)\circ T_\om^{(n)}}  e^{\theta \cdot S_{n}g(\om, \cdot)}f) \big) \\
 &=
 \mcl_{\sig^n\om} \big( e^{\theta \cdot g(\sig^{n} \om, \cdot)} \mcl_{\om}^{(n)} ( e^{\theta  \cdot S_{n}g(\om, \cdot)}f) \big)
 = \mcl_{\sig^n\om}^{\theta }\mcl_\om^{\theta , (n)}(f) = \mcl_\om^{\theta , (n+1)}(f).
\end{align*}
The second identity in~\eqref{eq:adjointPert}  follows directly from duality. Finally, \eqref{intprop} follows by integrating the first equality in~\eqref{eq:adjointPert}.
\end{proof}

\subsection{An auxiliary existence and regularity result}\label{lar}
We now recall the construction of Banach spaces introduced in~\cite{DFGTV1} that play an important role in the spectral analysis of the twisted cocycle.

Let $\mc{S}'$ denote the set of all measurable functions $\mc{V}\colon \Omega \times X\to \mathbb C$  such that:
\begin{itemize}
\item for $\mathbb P$-a.e. $\omega \in \Omega$, we have that $\mc{V}(\om, \cdot)\in \B$;
\item \[
\esssup_{\om \in \Om} \lVert \mc{V}(\om, \cdot)\rVert_{\B}<\infty;
\]
\end{itemize}
Then, $\mc{S}'$ is a Banach space with respect to the norm
\[
\lVert \mc{V}\rVert_{\infty}:=\esssup_{\om \in \Om}\lVert \mc{V}(\om, \cdot)\rVert_{\B}.
\]
Furthermore, let $\mc{S}$ consist of all $\mc{V}\in \mc{S}'$ such that
 for $\mathbb P$-a.e. $\omega \in \Omega$, 
\[
\int_X \mc{V} (\om, \cdot)\, dm=0.
\]
Then,  $\mc{S}$ is a closed subspace of $\mc{S}'$ and therefore also a Banach space.

For $\theta \in \C^d$ and $\mc{W} \in \mc S$, set
\begin{equation}\label{defF}
F(\theta, \mc{W})(\om, \cdot)= \frac{\Lsot(\mc{W}(\sig^{-1}\om, \cdot) + v_{\sig^{-1}\om}^0(\cdot))}{\int \Lsot(\mc{W}(\sig^{-1}\om, \cdot) + v_{\sig^{-1}\om}^0(\cdot)) dm} - \mc{W}(\om,\cdot) - v_\om^0(\cdot).
\end{equation}

\begin{lemma}\label{lem:FwellDef}
There exist  $\ep, R>0$ such that
  $F \colon \mc{D} \to \mc{S}$ is a well-defined analytic map on $\mc{D}:=\{ \theta \in \C^d : |\theta|<\ep \} \times B_{\mc{S}}(0,R)$, where $B_{\mc{S}}(0,R)$ denotes the ball of radius $R$ in $\mc{S}$ centered at $0$.
\end{lemma}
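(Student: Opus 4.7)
My plan is to verify in turn that $F$ takes values in $\mc{S}$ on a sufficiently small neighborhood of $(0,0)$ and that it is analytic there. First I would use~\eqref{intprop} to rewrite the denominator in~\eqref{defF} as
\[
D(\theta,\mc{W})(\om) := \int \Lsot\bigl(\mc{W}(\sig^{-1}\om,\cdot) + v_{\sig^{-1}\om}^0\bigr)\, dm = \int e^{\theta\cdot g(\sig^{-1}\om,\cdot)}\bigl(\mc{W}(\sig^{-1}\om,\cdot) + v_{\sig^{-1}\om}^0\bigr)\, dm.
\]
Because $\mc{W}\in\mc{S}$ has vanishing fiberwise integral and $\int v_{\sig^{-1}\om}^0\, dm = 1$, the difference $D(\theta,\mc{W})(\om)-1$ reduces to $\int (e^{\theta\cdot g}-1)(\mc{W}+v^0)\, dm$. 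The elementary bound $|e^{\theta\cdot g}-1|\le M|\theta|e^{M|\theta|}$ (valid a.e.\ by \eqref{obs}) together with $\|\mc{W}(\sig^{-1}\om,\cdot)\|_1 \le \|\mc{W}\|_\infty \le R$ then gives $|D(\theta,\mc{W})(\om)-1|\le M|\theta|e^{M|\theta|}(R+1)$, so by choosing $\ep,R$ small enough one has $|D|\ge 1/2$ uniformly in $\om$. The zero-integral condition for $F(\theta,\mc{W})(\om,\cdot)$ is then automatic by construction: the first summand integrates to $D/D=1$, the second to $0$, the third to $1$. The uniform $\BV$-bound follows from Lemma~\ref{l49}, the uniform bound~\eqref{eq:boundedv} on $v^0_\om$, and the lower bound on $|D|$.

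For analyticity I would exploit the explicit power series
\[
e^{\theta\cdot g(\om,\cdot)} = \sum_{n\ge 0}\frac{(\theta\cdot g(\om,\cdot))^n}{n!}.
\]
Iterating (V8) together with $\|g\|_\infty\le M$ and $V_0:=\esssup_{\om}\var(g_\om)<\infty$ yields $\|(\theta\cdot g(\om,\cdot))^n\|_\B \le (|\theta|M)^n + n(|\theta|M)^{n-1}|\theta|V_0$, so the series converges absolutely in $\B$, uniformly in $\om$, on compact subsets of $\C^d$ and defines an analytic map $\C^d\to\B$ with uniform-in-$\om$ estimates. Multiplication by $\mc{W}(\sig^{-1}\om,\cdot)+v_{\sig^{-1}\om}^0$ is a bounded bilinear operation on $\B$ (again by (V8) and (V3)), and composition with $\mcl_{\sig^{-1}\om}$ is uniformly bounded by~(C2), so the numerator $N(\theta,\mc{W})$ is analytic as a map $\mc{D}\to\mc{S}'$ (affine in $\mc{W}$ at fixed $\theta$). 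The denominator $D=\int N\,dm$ is then analytic into $L^\infty(\Om)$ and bounded away from $0$, so the geometric series $1/D = \sum_n (1-D)^n$ gives analyticity of $N/D$ into $\mc{S}'$; subtracting the affine term $\mc{W}(\om,\cdot)+v^0_\om$ preserves analyticity, and the first paragraph confirms the range lies in $\mc{S}$.

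The main obstacle I expect is maintaining uniformity in $\om$ throughout, since the norm on $\mc{S}$ is an essential supremum: a pointwise-in-$\om$ argument would only yield an $\om$-dependent radius of convergence, which is useless here. Fortunately every ingredient we rely on is uniform in $\om$: the bound in~(C2), the constant $K(\theta)$ in Lemma~\ref{l49} (which is independent of $\om$), the uniform regularity of $g$ in~\eqref{obs}, and the uniform bound~\eqref{eq:boundedv} on the random a.c.i.m. Hence every step of the Taylor estimate and of the reciprocal expansion for $1/D$ produces $\om$-uniform control, and analyticity of $F\colon\mc{D}\to\mc{S}$ follows.
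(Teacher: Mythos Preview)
Your proposal is correct and follows essentially the same route as the paper: decompose $F$ via the numerator $G(\theta,\mc{W})_\om=\Lsot(\mc{W}_{\sigma^{-1}\om}+v^0_{\sigma^{-1}\om})$ and the denominator $H(\theta,\mc{W})(\om)=\int G(\theta,\mc{W})_\om\,dm$, use Lemma~\ref{l49} and~\eqref{eq:boundedv} for well-definedness, bound $H$ away from zero near $(0,0)$, and deduce analyticity. The only difference is packaging: the paper invokes an external lemma (the analogue of \cite[Lemma 5.1]{DFGTV2}) for analyticity of $G$ and $H$ and uses abstract continuity of $H$ for the lower bound $|H|\ge 1/2$, whereas you spell these out directly via the power series for $e^{\theta\cdot g}$ and the explicit estimate $|D-1|\le M|\theta|e^{M|\theta|}(R+1)$.
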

\begin{proof}
Let $G \colon B_{\mathbb C^d}(0, 1) \times \mathcal S \to \mathcal S'$ and $H \colon B_{\mathbb C^d} (0, 1) \times \mc{S} \to L^\infty (\Om)$ be defined by~\eqref{GH}, where $B_{\C^d}(0,1)$ denotes the unit ball in $\C^d$.
 It follows from~\eqref{eq:boundedv} and Lemma~\ref{l49} that $G$ and $H$ are well-defined. Furthermore, by 
arguing as in~\cite[Lemma 5.1]{DFGTV2} we have that $G$ and $H$ are analytic. 

  Moreover,  since $H(0,0)(\om)=1$ for  $\om \in \Om$, we have that 
\begin{equation}\label{sca}
 \lvert H(\theta, \mc W)(\om)\rvert \ge 1-\lvert H(0,0)(\om)-H(\theta, \mc W)(\om)\rvert \ge 1-\lVert H(0,0)-H(\theta, \mc W)\rVert_{L^\infty},
\end{equation}
for \paeom. Hence, the continuity of $H$ implies that  $\lVert H(0,0)-H(\theta, \mc W)\rVert_{L^\infty} \le 1/2$
for all $(\theta, \mc W)$ in a neighborhood of $(0,0)\in \mathbb C^d \times \mc S$. We observe that it follows from~\eqref{sca} that in such neighborhood,
\[
 \essinf_{\om} \lvert H(\theta, \mc W)(\om)\rvert \ge 1/2.
\]
The above inequality together with~\eqref{eq:boundedv}  yields the desired conclusion.
\end{proof}
The proof of the following result follows closely the proof of~\cite[Lemma 3.5.]{DFGTV1}.
\begin{lemma}\label{thm:IFT}
Let $\mc{D}=\{ \theta \in \C^d : |\theta|<\ep \} \times B_{\mc{S}}(0,R)$ be as in Lemma~\ref{lem:FwellDef}. Then, by shrinking $\epsilon>0$ if necessary, we have that there exists $O\colon \{ \theta \in \C^d: |\theta|<\ep \} \to \mc S$ analytic in $\theta$ such that 
\begin{equation}
F(\theta, O(\theta))=0.
\end{equation}

\end{lemma}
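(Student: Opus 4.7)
The plan is to apply the analytic implicit function theorem in Banach spaces to the map $F\colon \mc{D}\to \mc{S}$ from Lemma~\ref{lem:FwellDef} at the point $(0,0)$. Thus we need to check three things: (i) $F(0,0)=0$; (ii) the partial derivative $\partial_{\mc{W}} F(0,0)$ is a bounded invertible operator on $\mc{S}$; (iii) analyticity of $F$, which is already provided by Lemma~\ref{lem:FwellDef}.

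For (i), at $\theta=0$ we have $\mcl^\theta_{\sig^{-1}\om}=\mcl_{\sig^{-1}\om}$, and the defining relation $\mcl_{\sig^{-1}\om}v_{\sig^{-1}\om}^0=v_\om^0$ from \eqref{v0om} together with $\int v_\om^0\,dm=1$ immediately gives $F(0,0)(\om,\cdot)=v_\om^0-v_\om^0=0$.

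For (ii), I would differentiate \eqref{defF} at $(0,0)$. Since for any $\mc{U}\in \mc{S}$ one has $\int \mcl_{\sig^{-1}\om}\mc{U}(\sig^{-1}\om,\cdot)\,dm=\int \mc{U}(\sig^{-1}\om,\cdot)\,dm=0$, the denominator at $(0,0)$ equals $1$ and its linearization in $\mc{W}$ vanishes on $\mc{S}$. Hence
\[
\bigl(\partial_{\mc{W}} F(0,0)\cdot \mc{U}\bigr)(\om,\cdot)=\mcl_{\sig^{-1}\om}\mc{U}(\sig^{-1}\om,\cdot)-\mc{U}(\om,\cdot).
\]
To invert this, given $\mc{V}\in \mc{S}$ I would solve the cohomological equation $\mcl_{\sig^{-1}\om}\mc{U}(\sig^{-1}\om,\cdot)-\mc{U}(\om,\cdot)=\mc{V}(\om,\cdot)$ by iteration and propose the explicit candidate
\[
\mc{U}(\om,\cdot):=-\sum_{k=0}^{\infty}\mcl_{\sig^{-k}\om}^{(k)}\mc{V}(\sig^{-k}\om,\cdot).
\]
Convergence in $\mc{S}$ follows from the exponential decay \ref{cond:dec}, which applies because $\mc{V}(\sig^{-k}\om,\cdot)$ has zero integral; this yields $\lVert \mcl_{\sig^{-k}\om}^{(k)}\mc{V}(\sig^{-k}\om,\cdot)\rVert_{\BV}\le K'e^{-\lam k}\lVert \mc{V}\rVert_{\infty}$, and since $\mcl$ preserves integrals, each summand lies in $\mc{S}$, giving a bounded linear inverse with norm at most $K'/(1-e^{-\lam})$. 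A direct verification shows that this $\mc{U}$ solves the cohomological equation and that uniqueness in $\mc{S}$ follows from the same decay bound applied to any putative difference of two solutions.

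With $F(0,0)=0$, $\partial_{\mc{W}}F(0,0)$ boundedly invertible on $\mc{S}$, and $F$ analytic on $\mc{D}$, the analytic implicit function theorem in Banach spaces produces, after possibly shrinking $\ep>0$, an analytic map $O\colon\{\theta\in \C^d:|\theta|<\ep\}\to \mc{S}$ with $O(0)=0$ and $F(\theta,O(\theta))=0$ for all such $\theta$, as desired. The main obstacle is step (ii): one has to recognize that the zero-mean condition built into $\mc{S}$ is precisely what makes both the linearization of the denominator disappear and the decay estimate \ref{cond:dec} applicable to the cohomological equation; without restricting to $\mc{S}$, the kernel of $\partial_{\mc{W}}F(0,0)$ would contain the one-dimensional family of random acims and the operator could not be inverted.
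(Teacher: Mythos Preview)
Your proposal is correct and follows essentially the same approach as the paper: verify $F(0,0)=0$, compute $\partial_{\mc W}F(0,0)\mc U=\mcl_{\sig^{-1}\om}\mc U(\sig^{-1}\om,\cdot)-\mc U(\om,\cdot)$, invert it via the explicit series $-\sum_{k\ge 0}\mcl_{\sig^{-k}\om}^{(k)}\mc V(\sig^{-k}\om,\cdot)$ using \ref{cond:dec}, and apply the analytic implicit function theorem. The only difference is in the injectivity step: the paper argues that a nonzero $\mc U\in\mc S$ in the kernel would give a second equivariant family $\mcl_\om\mc U_\om=\mc U_{\sig\om}$ outside the span of $v_\om^0$, contradicting one-dimensionality of the top Oseledets space (Lemma~\ref{lem:qc+1dim}), whereas you iterate the kernel relation and apply \ref{cond:dec} directly to force $\mc U=0$; your argument is slightly more self-contained.
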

\begin{proof}
We notice that  $F(0,0)=0$. Moreover, Proposition~\ref{difF} implies that
\[
 (D_{d+1}F(0,0) \mathcal X)(\om, \cdot)=\mathcal L_{\sigma^{-1} \om}(\mathcal X(\sigma^{-1}\om, \cdot))-\mathcal X(\om, \cdot) \quad \text{for $\om \in \Om$ and $\mathcal X\in \mathcal S$,}
\]
where $D_{d+1}F$ denotes the derivative of $F$ with respect to $\mc W$.
 We now prove that $D_{d+1} F(0,0)$ is bijective operator.

For injectivity, we have that if $D_{d+1}F(0, 0)\mathcal X=0$ for some nonzero $\mc{X}\in \mc{S}$, then $\mcl_\om \mc{X}_\om = \mc{X}_{\sig\om}$ for \paeom. Notice that $\mc{X}_\om \notin \langle v^0_\om \rangle$ because $\int \mc{X}_\om(\cdot) dm=0$ and  $ \mc{X}_\om\neq 0$. Hence, this yields a
contradiction with the one-dimensionality of the top Oseledets space of the cocycle $\mathcal L$, given by Lemma~\ref{lem:qc+1dim}. Therefore,  $D_{d+1}F(0,0)$ is injective.
To prove surjectivity, take $\mc{X}\in \mc{S}$ and
 let \begin{equation}\label{inverse} \tld{\mc{X}}(\om, \cdot):= - \sum_{j=0}^\infty  \mcl_{\sig^{-j}\om}^{(j)} \mc{X}(\sig^{-j}\om, \cdot). \end{equation} It follows from~\ref{cond:dec} that $\tld{\mc{X}}
 \in \mathcal S$ and it is easy to verify that
$D_{d+1} F(0,0)\tld{\mc{X}}=\mc{X}$.
Thus, $D_{d+1}F(0,0)$ is surjective.

Combining the previous arguments, we conclude that $D_{d+1}F(0,0)$ is bijective. The conclusion of the lemma  now follows directly from the  implicit complex analytic implicit function theorem in Banach spaces (see for instance the appendix in \cite{Val}).
\end{proof}

\subsection{On the top Lyapunov exponent for the twisted cocycle}
\label{LMB}
Let $\Lam(\theta)$ be the largest Lyapunov exponent associated to the twisted cocycle $\mcl^\theta$. Let $0<\ep<1$ and $O(\theta)$ be as in Lemma~\ref{thm:IFT}.
Let
\begin{equation}\label{eq:vomt}
v_\om^\theta(\cdot):= v_\om^0(\cdot) +O(\theta)(\om,\cdot).
\end{equation}
We notice that $\int v_\om^\theta(\cdot)\ dm =1$ and by Lemma~\ref{thm:IFT}, $\theta \mapsto v^\theta$ is analytic.
 Let us define
\begin{equation}\label{eq:hatLam}
 \hat\Lambda (\theta) :=  \int_\Om \log \Big|\int_X e^{\theta \cdot  g(\om, x)} v_\om^\theta(x) \,d m(x) \Big|\, d\bbp(\om),
\end{equation}
and
\begin{equation}\label{eq:int}
\lot :=  \int_X e^{\theta \cdot  g(\om, x)} v_\om^\theta(x) \,d m(x)
= \int_X \mcl_\om^\theta v_\om^\theta(x) \,d m(x),
\end{equation}
where the last identity follows from~\eqref{intprop}.

The proof of the following result is identical to the proof of~\cite[Lemma 3.8.]{DFGTV1}.
\begin{lemma}\label{lem:lowerBoundLam}
For every $\theta \in B_{\C^d}(0,\ep):= \{ \theta \in \C : |\theta|<\ep \}$, $ \hat\Lambda (\theta)\leq \Lambda (\theta)$.
\end{lemma}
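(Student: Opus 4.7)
The plan is to extract a cocycle eigenvalue relation from the identity $F(\theta, O(\theta))=0$ and then compare the growth it yields with the top Lyapunov exponent given by the MET.

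First, I would unpack the equation $F(\theta, O(\theta))=0$ using~\eqref{defF} and the definition $v_\om^\theta = v_\om^0 + O(\theta)(\om,\cdot)$ to obtain
\[
\frac{\Lsot v_{\sigma^{-1}\om}^\theta}{\int \Lsot v_{\sigma^{-1}\om}^\theta\, dm} = v_\om^\theta \quad \text{for $\paeom$,}
\]
which, after substituting $\om \mapsto \sigma\om$ and recalling~\eqref{eq:int}, yields the equivariance relation
\[
\mcl_\om^\theta v_\om^\theta = \lot\, v_{\sigma\om}^\theta \quad \text{for $\paeom$.}
\]
Iterating (using the cocycle property $\mcl_\om^{\theta,(n)} = \mcl_{\sigma^{n-1}\om}^\theta \circ \cdots \circ \mcl_\om^\theta$) gives
\[
\mcl_\om^{\theta,(n)} v_\om^\theta = \Big(\prod_{i=0}^{n-1} \lambda_{\sigma^i\om}^\theta\Big)\, v_{\sigma^n\om}^\theta.
\]

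Second, I would take $\mathcal{B}$-norms, logarithms, and divide by $n$:
\[
\frac{1}{n}\log \lVert \mcl_\om^{\theta,(n)} v_\om^\theta \rVert_\B = \frac{1}{n}\sum_{i=0}^{n-1} \log |\lambda_{\sigma^i\om}^\theta| + \frac{1}{n}\log\lVert v_{\sigma^n\om}^\theta\rVert_\B.
\]
Since $\theta \mapsto v^\theta$ is analytic (Lemma~\ref{thm:IFT}) and $v_\om^0$ satisfies~\eqref{eq:boundedv} and~\eqref{lowerbound}, by shrinking $\ep$ if necessary I can ensure both $\esssup_\om \lVert v_\om^\theta\rVert_\B < \infty$ and that $\lot$ is uniformly bounded above and bounded away from $0$ in modulus (noting $\lambda_\om^0 = 1$). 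The first bound gives $\frac{1}{n}\log\lVert v_{\sigma^n\om}^\theta\rVert_\B \to 0$; the second makes $\log|\lot|$ an element of $L^\infty(\mathbb P) \subset L^1(\mathbb P)$, so Birkhoff's ergodic theorem (using ergodicity of $\sigma$) gives
\[
\lim_{n\to\infty} \frac{1}{n}\sum_{i=0}^{n-1}\log |\lambda_{\sigma^i\om}^\theta| = \int_\Om \log|\lot|\, d\bbp(\om) = \hat\Lambda(\theta) \quad \text{for $\paeom$.}
\]

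Finally, by the definition of $\Lambda(\theta) = \Lambda(\mc{R}^\theta)$ and Theorem~\ref{thm:MET} applied to the quasi-compact cocycle $\mc{R}^\theta$ (for $\ep$ small, $\mc{R}^\theta$ remains admissible in the sense needed, which is justified by Lemma~\ref{l49} together with perturbation arguments carried out earlier in the section), we have
\[
\limsup_{n\to\infty} \frac{1}{n}\log \lVert \mcl_\om^{\theta,(n)} f\rVert_\B \le \Lambda(\theta) \quad \text{for every $f\in \B$ and $\paeom$.}
\]
Applying this with $f = v_\om^\theta$ and combining with the limit computation above yields $\hat\Lambda(\theta) \le \Lambda(\theta)$.

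The main subtlety is the integrability/boundedness of $\log|\lot|$, which requires the uniform lower bound $|\lot| \geq c > 0$ for $|\theta|<\ep$. This is the step I expect to need the most care, but it follows from continuity of $\theta \mapsto \lot$ (as a map into $L^\infty(\mathbb P)$, via analyticity of $v^\theta$ and Lemma~\ref{l49}) together with $\lambda_\om^0 \equiv 1$; the companion uniform upper bound $|\lot|\le e^{|\theta|M}$ is immediate from~\eqref{eq:int} and~\eqref{obs}.
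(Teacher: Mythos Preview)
Your approach is essentially correct and follows the natural line one would expect from the referenced proof in~\cite[Lemma 3.8]{DFGTV1}: exploit the eigenvalue relation $\mcl_\om^\theta v_\om^\theta=\lot v_{\sigma\om}^\theta$, apply Birkhoff to identify the growth rate of the product as $\hat\Lambda(\theta)$, and compare with the definition of $\Lambda(\theta)$.

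There is, however, a circularity in your final step. You invoke Theorem~\ref{cor:quasicompactness} (quasi-compactness of $\mc{R}^\theta$) and the MET to get the bound
\[
\limsup_{n\to\infty}\frac1n\log\lVert \mcl_\om^{\theta,(n)} f\rVert_\B \le \Lambda(\theta).
\]
But Theorem~\ref{cor:quasicompactness} appears \emph{after} the present lemma, and its proof (via~\cite[Theorem 3.12]{DFGTV1}) uses Lemma~\ref{lem:lowerBoundLam} to show that $\Lambda(\theta)$ stays above $\kappa(\mc{R}^\theta)$ for small $\theta$. So you cannot appeal to quasi-compactness here.

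Fortunately, you do not need it. The inequality above follows immediately from the \emph{definition} of $\Lambda(\theta)$ in Section~\ref{sectionMET}: by Lemma~\ref{l49} we have $\log^+\lVert\mcl_\om^\theta\rVert\in L^1(\bbp)$, so Kingman's subadditive ergodic theorem gives $\Lambda(\theta)=\lim_{n\to\infty}\frac1n\log\lVert\mcl_\om^{\theta,(n)}\rVert$ for $\paeom$, with no quasi-compactness assumption. Then simply bound
\[
\frac1n\log\lVert \mcl_\om^{\theta,(n)} v_\om^\theta\rVert_\B \le \frac1n\log\lVert \mcl_\om^{\theta,(n)}\rVert + \frac1n\log\lVert v_\om^\theta\rVert_\B,
\]
and let $n\to\infty$. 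With this correction your argument is complete. (Note also that $\int v_\om^\theta\,dm=1$ forces $\lVert v_\om^\theta\rVert_\B\ge 1$, so the term $\frac1n\log\lVert v_{\sigma^n\om}^\theta\rVert_\B$ is indeed nonnegative and bounded above, hence tends to~$0$.)
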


The proof of the following result can be established by repeating the arguments in the proof of~\cite[Lemma 3.9.]{DFGTV1}.
\begin{lemma}\label{difflambda}
We have that $\hat\Lambda$ is differentiable on a neighborhood of 0, and for each $i\in \{1, \ldots, d\}$ we have that
\[
D_i\hat \Lambda (\theta)=
\Re \Bigg( \int_\Om \frac{ \overline{\lot}  ( \int_X g^i(\om, \cdot)e^{\theta \cdot g(\om, \cdot)}v_\om^\theta(\cdot)\, dm+\int_X  e^{\theta \cdot  g(\om, \cdot)}(D_i O(\theta))_\om (\cdot)\, dm )}{|\lot |^2}\, d\bbp(\om) \Bigg),
\]
where $\Re (z)$ denotes the real part of a complex number $z$ and $\overline{z}$ the complex conjugate of $z$. Here, $D_i$ denotes the derivative with respect to $\theta_i$, where $\theta=(\theta_1, \ldots, \theta_d)$.
\end{lemma}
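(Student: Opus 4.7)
The plan is to justify exchanging differentiation with both the integral over $X$ defining $\lot$ and the integral over $\Om$ defining $\hat\Lambda$, then compute the derivative via the standard formula $D_i\log|z|=\Re(D_i z /z)$.

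First I would collect uniform-in-$\om$ bounds. By Lemma~\ref{thm:IFT}, $\theta \mapsto O(\theta) \in \mc{S}$ is analytic on $\{|\theta|<\ep\}$, so $\theta \mapsto v_\om^\theta = v_\om^0 + O(\theta)(\om,\cdot)$ is analytic in $\theta$ with values in $\B$, and both $\esssup_\om\|v_\om^\theta\|_\B$ and $\esssup_\om\|(D_iO(\theta))_\om\|_\B$ are bounded uniformly on any compact neighborhood of $0$ (by the definition of $\|\cdot\|_\infty$ on $\mc{S}'$ and Cauchy estimates applied to the analytic function with values in $\mc{S}'$). Combined with $\|g\|_{L^\infty}\le M$, this makes the integrand $e^{\theta\cdot g(\om,x)}v_\om^\theta(x)$ and its $\theta_i$-derivative $(g^i(\om,x)v_\om^\theta(x) + (D_iO(\theta))_\om(x))e^{\theta\cdot g(\om,x)}$ uniformly bounded in $(\om,x)$ for $\theta$ in such a neighborhood. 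Applying dominated convergence inside the integral over $X$ then yields
\[
D_i\lot = \int_X g^i(\om,\cdot)e^{\theta\cdot g(\om,\cdot)}v_\om^\theta(\cdot)\,dm + \int_X e^{\theta\cdot g(\om,\cdot)}(D_iO(\theta))_\om(\cdot)\,dm,
\]
with $|D_i\lot|$ bounded uniformly in $\om$ for $\theta$ in a compact neighborhood of $0$.

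Next I would verify that $\lot$ stays away from $0$ uniformly in $\om$. At $\theta=0$, $\lambda_\om^0=\int v_\om^0\,dm = 1$ by the normalisation of the random a.c.i.m. Since $(\theta,\om)\mapsto \lot$ is continuous in $\theta$ uniformly in $\om$ (by the uniform bound on $\|v_\om^\theta\|_\B$ and $\|g\|_{L^\infty}\le M$), shrinking $\ep$ further we may assume $|\lot - 1|\le 1/2$, hence $|\lot|\ge 1/2$, for all $|\theta|<\ep$ and $\paeom$. Then a holomorphic branch of $\log\lot$ exists on this neighborhood and $\log|\lot|=\Re\log\lot$, so that
\[
D_i\log|\lot| = \Re\Bigl(\frac{D_i\lot}{\lot}\Bigr) = \Re\Bigl(\frac{\overline{\lot}\,D_i\lot}{|\lot|^2}\Bigr).
\]

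Finally, to differentiate $\hat\Lambda(\theta)=\int_\Om\log|\lot|\,d\bbp$, I would once more apply dominated convergence: the lower bound $|\lot|\ge 1/2$ together with the uniform bound on $|D_i\lot|$ provides a uniform-in-$\om$ majorant for the difference quotients $(\log|\lambda_\om^{\theta+he_i}|-\log|\lot|)/h$, allowing the exchange of $D_i$ and $\int_\Om d\bbp$. Substituting the expression for $D_i\lot$ derived above yields exactly the stated formula.

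The main obstacle is the uniformity of all these bounds in $\om$, which is not automatic from pointwise analyticity; it relies essentially on the fact that $O(\theta)$ takes values in the Banach space $\mc{S}$ whose norm $\|\cdot\|_\infty$ is the $\esssup$ over $\om$, so analyticity of $\theta\mapsto O(\theta)\in\mc{S}$ directly produces the required $\esssup_\om$ control on $(D_iO(\theta))_\om$. Once this uniformity is secured, both applications of dominated convergence are routine.
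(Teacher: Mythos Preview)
Your proposal is correct and follows essentially the same route as the argument the paper defers to (namely \cite[Lemma 3.9]{DFGTV1}): obtain uniform-in-$\om$ bounds from the $\mc{S}$-valued analyticity of $O(\theta)$, use them to justify differentiating under both integrals, and compute via $D_i\log|\lot|=\Re(D_i\lot/\lot)$. Your emphasis on why uniformity holds---that the norm on $\mc{S}$ is exactly $\esssup_\om\|\cdot\|_\B$, so Cauchy estimates for the $\mc{S}$-valued analytic map $\theta\mapsto O(\theta)$ automatically control $\esssup_\om\|(D_iO(\theta))_\om\|_\B$---is precisely the point that makes the argument work.
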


\begin{lemma}\label{zero}
For $i\in \{1, \ldots, d\}$, 
 we have that $D_i \hat \Lambda(0)=0$.
\end{lemma}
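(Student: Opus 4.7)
The plan is to simply substitute $\theta = 0$ into the formula for $D_i\hat\Lambda$ provided by Lemma~\ref{difflambda} and show that each of the two terms under the integral vanishes.

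First, I would compute $\lambda_\omega^0$. From \eqref{eq:int} together with the fact that $v_\omega^0$ is a probability density (so $\int v_\omega^0\, dm = 1$), we obtain $\lambda_\omega^0 = 1$ for $\mathbb P$-a.e.\ $\omega$, hence $\overline{\lambda_\omega^0} = 1$ and $|\lambda_\omega^0|^2 = 1$. Thus the formula from Lemma~\ref{difflambda} collapses at $\theta = 0$ to
\[
D_i\hat\Lambda(0) = \Re\int_\Omega \left( \int_X g^i(\omega, x) v_\omega^0(x)\, dm(x) + \int_X (D_i O(0))_\omega(x)\, dm(x) \right) d\mathbb P(\omega).
\]

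Next I would dispose of both inner integrals. The first one vanishes for $\mathbb P$-a.e.\ $\omega$ by the fiberwise centering assumption \eqref{zeromean} on the observable $g$. The second one vanishes because, by construction of the map $O$ in Lemma~\ref{thm:IFT}, we have $O(\theta) \in \mathcal{S}$ for all $\theta$ in a neighborhood of $0$; differentiating in $\theta_i$ and using that $\mathcal{S}$ is a closed subspace of $\mathcal{S}'$ (so its tangent space at any point is contained in $\mathcal{S}$ itself), we conclude $D_i O(0) \in \mathcal{S}$, which by definition of $\mathcal{S}$ means $\int_X (D_iO(0))_\omega(x)\, dm(x) = 0$ for $\mathbb P$-a.e.\ $\omega$.

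Combining these two observations, the integrand vanishes $\mathbb P$-a.e., so $D_i\hat\Lambda(0) = 0$. There is no real obstacle here; the only subtle point worth being careful about is the justification that $D_iO(0)$ still lies in $\mathcal{S}$, which follows immediately from linearity of the derivative together with the fact that $\mathcal{S}$ is a closed linear subspace of the Banach space $\mathcal{S}'$.
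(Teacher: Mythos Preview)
Your proof is correct and follows essentially the same approach as the paper: evaluate the formula from Lemma~\ref{difflambda} at $\theta=0$, use $\lambda_\omega^0=1$, kill the first term with the centering condition~\eqref{zeromean}, and kill the second by showing $D_iO(0)\in\mathcal S$. The only minor difference is in the justification of this last point: the paper invokes the implicit function theorem formula $D_iO(0)=-D_{d+1}F(0,0)^{-1}(D_iF(0,0))$ and uses that $D_{d+1}F(0,0)^{-1}$ maps $\mathcal S$ to $\mathcal S$, whereas you argue directly that an analytic map into a closed subspace has its derivative in that subspace---a cleaner route that avoids unpacking the IFT.
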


\begin{proof}
Since $\lambda_\om^0=1$, it follows from the previous lemma  that 
\begin{equation}\label{6:20}
D_i \hat \Lambda(0)=\Re \Bigg( \int_\Om    \int_X \left(g^i(\om, \cdot)v_\om^0(\cdot)+ (D_i O(0))_\om (\cdot) \right) \, dm \, d\bbp(\om) \Bigg).
\end{equation}
On the other hand,  it follows from the implicit function theorem that
 \[
 D_i O(0)=-D_{d+1}F(0,0)^{-1}( D_iF(0,0)).
 \]
It was proved in Lemma~\ref{thm:IFT} that $D_{d+1}F(0,0) \colon \mathcal S \to \mathcal S$ is bijective. Thus, $D_{d+1}F(0,0)^{-1} \colon \mathcal S \to \mathcal S$ and therefore  $D_iO(0) \in \mathcal S$ which implies that
\begin{equation}\label{zerod}
\int_X D_iO(0)_\om \, dm=0 \text{ for \paeom.}
\end{equation}
 The conclusion of the lemma now
follows directly  from~\eqref{zeromean}, \eqref{6:20} and~\eqref{zerod}.
\end{proof}

The proofs of the following two results are identical to the proofs of~\cite[Theorem 3.12]{DFGTV1} and~\cite[Corollary 3.14.]{DFGTV1} respectively.
\begin{thm}[Quasi-compactness of twisted cocycles, $\theta$ near 0]\label{cor:quasicompactness}
Assume that the cocycle $\mc{R}=(\Om, \mathcal F, \bbp, \sig, \B, \mathcal L)$ is admissible. 
For $\theta \in \C^d$ sufficiently close to $0$, we have that the twisted cocycle $\mathcal L^\theta$ is quasi-compact.
Furthermore, for such $\theta$, the top Oseledets space of $\mathcal L^\theta$ is one-dimensional. That is, $\dim Y^\theta(\om)=1$ for \paeom.
\end{thm}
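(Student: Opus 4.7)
The plan is to follow the strategy used to prove~\cite[Theorem 3.12]{DFGTV1} in the scalar ($d=1$) setting, observing that the vector-valued parameter $\theta \in \C^d$ enters the construction only through the bounded analytic multiplier $e^{\theta\cdot g(\om,\cdot)}$. Lemmas~\ref{lj} and~\ref{l49} already upgrade the two crucial $\BV$-estimates to this setting with $|\theta|$ interpreted as the Euclidean norm on $\C^d$, so the scalar estimates can be reused essentially verbatim.

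The first step is to promote admissibility~\ref{C0} to a Lasota--Yorke inequality for the twisted cocycle. Starting from $\mcl_\om^{\theta,(N)} f = \mcl_\om^{(N)}(e^{\theta\cdot S_N g(\om,\cdot)} f)$ in~\eqref{eq:adjointPert}, using (V8), Lemma~\ref{lj} and~\eqref{obs} to control the $\BV$-norm of the twisting factor, and then applying~\ref{C0}, one obtains
\begin{equation*}
\|\mcl_\om^{\theta,(N)} f\|_{\BV} \le \tilde\alpha^N(\om,\theta)\,\|f\|_{\BV} + \tilde\beta^N(\om,\theta)\,\|f\|_1,
\end{equation*}
with $\tilde\alpha^N, \tilde\beta^N$ continuous in $\theta$ and satisfying $\tilde\alpha^N(\cdot,0) = \alpha^N$, $\tilde\beta^N(\cdot,0) = \beta^N$. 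Since $\int \log \alpha^N \, d\bbp < 0$, this negativity persists for $\tilde\alpha^N(\cdot,\theta)$ once $|\theta|$ is small enough. Together with the uniform operator-norm bound of Lemma~\ref{l49}, the quasi-compactness criterion of~\cite[Lemma~2.1]{DFGTV1} gives quasi-compactness of $\mc{R}^\theta$, with an upper bound on $\kappa(\mc{R}^\theta)$ close to $\frac{1}{N}\int \log \alpha^N \, d\bbp < 0$.

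The second step identifies the top Oseledets space with $\om \mapsto \langle\vot\rangle$. Unfolding $F(\theta, O(\theta)) = 0$ from Lemma~\ref{thm:IFT} using~\eqref{eq:vomt} and~\eqref{eq:int} yields
\begin{equation*}
\mcl_\om^\theta \vot = \lot\, v_{\sigma\om}^\theta \qquad \text{for \paeom,}
\end{equation*}
exhibiting $\langle\vot\rangle$ as a one-dimensional equivariant family. Analytic dependence of $v^\theta$ on $\theta$ together with~\eqref{eq:boundedv} and~\eqref{lowerbound} at $\theta = 0$ furnishes uniform $\BV$-bounds on $\vot$ from above and below, so the Lyapunov exponent along this family is exactly $\hat\Lambda(\theta)$. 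By Lemma~\ref{lem:lowerBoundLam} $\hat\Lambda(\theta) \le \Lambda(\theta)$, and by continuity of $\hat\Lambda$ at $0$ (using $\hat\Lambda(0)=0$) combined with Step~1, the gap $\hat\Lambda(\theta) > \kappa(\mc{R}^\theta)$ persists in a neighborhood of $0$.

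The main obstacle is ruling out further top Oseledets directions, for which I argue by the local uniqueness in Lemma~\ref{thm:IFT}. Given a hypothetical $w_\om \in Y^\theta(\om)$ not proportional to $\vot$, one rescales so that $\int w_\om\,dm = 1$ and sets $\mc{X} := w - v^\theta$, which lies in $\mc{S}$. A projection argument onto $\langle\vot\rangle$ using the dual equivariant family (extracted from Corollary~\ref{cor:METAdjoint} applied to $\mc{R}^\theta$) together with the spectral gap $\hat\Lambda(\theta) > \kappa(\mc{R}^\theta)$ identifies the scalar cocycle along $w$ with $\lot$, so that $\mc{X}$ becomes a second solution of $F(\theta, \cdot) = 0$ close to $O(\theta)$. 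Local uniqueness in Lemma~\ref{thm:IFT} then forces $\mc{X} = O(\theta)$, so $w$ is a scalar multiple of $\vot$ and $\dim Y^\theta(\om) = 1$. Making this scalar-cocycle identification rigorous in the random (cocycle) setting, rather than the deterministic (single-operator) setting, is the technical heart of the argument, and is precisely the content of the corresponding step in~\cite{DFGTV1}, which transfers without change once $|\theta|$ is read as the $\C^d$-norm.
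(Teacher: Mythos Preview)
Your Steps~1 and~2 are correct and align with the paper's approach, which simply declares the proof identical to~\cite[Theorem~3.12]{DFGTV1}: the perturbed Lasota--Yorke inequality gives quasi-compactness with $\kappa(\mc{R}^\theta)$ bounded away from $0$, and the equivariant line $\langle\vot\rangle$ with exponent $\hat\Lambda(\theta)$ is produced exactly as you describe.

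Your Step~3, however, has a genuine gap. The implicit function theorem in Lemma~\ref{thm:IFT} yields only \emph{local} uniqueness: $O(\theta)$ is the unique zero of $F(\theta,\cdot)$ inside $B_{\mc{S}}(0,R)$. A hypothetical second top-Oseledets direction $w_\om$, even granting the normalization $\int w_\om\,dm=1$ and the scalar-cocycle identification (which is itself not automatic, since individual vectors in a multi-dimensional Oseledets space need not be eigenvectors of the one-step map), would produce $\mc{W}':=w-v^0$ solving $F(\theta,\mc{W}')=0$; but nothing forces $\mc{W}'\in B_{\mc{S}}(0,R)$, because Oseledets subspaces carry no uniform norm bounds on their elements. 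So local uniqueness cannot exclude $w$, and your reduction to~\cite{DFGTV1} does not close this.

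The argument in~\cite[Theorem~3.12]{DFGTV1} takes a different route for one-dimensionality: it perturbs the uniform exponential decay~\ref{cond:dec} directly to the twisted cocycle, producing a codimension-one equivariant family on which $\mcl_\om^{\theta,(n)}$ contracts at a uniform exponential rate $r<1$ for all small $\theta$. This forces every exceptional Lyapunov exponent other than the one carried by $\vot$ to lie below $\log r$, which in one stroke gives both $\dim Y^\theta(\om)=1$ and $\Lambda(\theta)=\hat\Lambda(\theta)$. Replace your Step~3 by this perturbation-of-\ref{cond:dec} argument; the vector-valued $\theta$ plays no role beyond Lemma~\ref{lj}, so the transfer from $d=1$ is indeed routine there.
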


\begin{lemma}\label{cor:LamHatLam}
For  $\theta\in \C^d$ near 0, we have that $\Lam(\theta)=\hat\Lam(\theta)$.
In particular, $\Lam(\theta)$ is differentiable near $0$ and $D_i \Lam(0)=0$, for every $i\in \{1, \ldots, d\}$.
\end{lemma}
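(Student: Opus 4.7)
The plan is to identify $v_\om^\theta$ as a generator of the top Oseledets space $Y^\theta(\om)$ of the twisted cocycle $\mcl^\theta$, which by Theorem~\ref{cor:quasicompactness} is one-dimensional for $\theta$ near $0$. Once this is done, $\Lam(\theta)$ — the exponential growth rate of vectors in $Y^\theta(\om)$ — will coincide with the growth rate computed along the specific family $v_\om^\theta$, which a direct calculation identifies with $\hat\Lam(\theta)$. Combined with Lemma~\ref{lem:lowerBoundLam} (which already supplies $\hat\Lam(\theta)\le\Lam(\theta)$), this yields equality.

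First I would unwind the identity $F(\theta,O(\theta))\equiv 0$ from Lemma~\ref{thm:IFT} together with~\eqref{eq:vomt} to obtain the equivariance-up-to-scalar relation $\mcl_\om^\theta v_\om^\theta = \lam_\om^\theta v_{\sigma\om}^\theta$ for $\bbp$-a.e.\ $\om$, where $\lam_\om^\theta$ is as in~\eqref{eq:int}. Iterating gives
\[
\mcl_\om^{\theta,(n)} v_\om^\theta = \Big(\prod_{i=0}^{n-1} \lam_{\sigma^i\om}^\theta\Big) v_{\sigma^n\om}^\theta.
\]
Analyticity of $\theta\mapsto v_\om^\theta$ at $0$, together with the uniform upper bound~\eqref{eq:boundedv} and lower bound~\eqref{lowerbound} on $v^0$, forces $\|v_{\sigma^n\om}^\theta\|_\B$ to be bounded above and below uniformly in $n$ a.s.\ for $\theta$ sufficiently close to $0$. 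Applying Birkhoff's ergodic theorem to $\log|\lam^\theta_\cdot|$ therefore yields $\tfrac{1}{n}\log\|\mcl_\om^{\theta,(n)} v_\om^\theta\|_\B \to \hat\Lam(\theta)$ almost surely.

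Next I would decompose $v_\om^\theta = y_\om(\theta) + h_\om(\theta)$ along $\B = Y^\theta(\om)\oplus H^\theta(\om)$. Equivariance of this splitting together with the recursion above forces $y_\om(\theta)$ and $h_\om(\theta)$ to obey the same scalar recursion, so the set $\{\om : y_\om(\theta)=0\}$ is $\sigma$-invariant and, by ergodicity, has $\bbp$-measure $0$ or $1$. To rule out the latter, the most direct route is to repeat the IFT construction of Section~\ref{lar} on the adjoint cocycle $\mcl^{\theta*}$, producing an analytic family $\phi_\om^\theta\in Y^{\theta*}(\om)$ with $\phi_\om^0$ equal (up to normalization) to the integration functional $f\mapsto\int f\,dm$. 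Since $\phi_\om^0(v_\om^0)=1$, analyticity of $\theta\mapsto\phi_\om^\theta(v_\om^\theta)$ forces nonvanishing on a neighborhood of $0$, and Lemma~\ref{lem:AnnihilatorOsSplittings} then gives $y_\om(\theta)\neq 0$ on a positive-measure set, hence a.s.

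With $y_\om(\theta)\neq 0$ a.s., the $Y^\theta$-component of $\mcl_\om^{\theta,(n)}v_\om^\theta$ dominates, so by the Oseledets characterization the growth rate of $\|\mcl_\om^{\theta,(n)}v_\om^\theta\|_\B$ equals $\Lam(\theta)$; comparing with the Birkhoff computation above yields $\Lam(\theta)=\hat\Lam(\theta)$. Differentiability of $\Lam$ near $0$ and $D_i\Lam(0)=0$ are then immediate consequences of Lemma~\ref{difflambda} and Lemma~\ref{zero}. I expect the nonvanishing of $y_\om(\theta)$ to be the principal obstacle, since it depends on transferring the analytic IFT construction of $v^\theta$ to the adjoint side (or, equivalently, establishing a sufficiently uniform measurable perturbation statement for the top Oseledets projector of $\mcl^\theta$ in $\theta$).
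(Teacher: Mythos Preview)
Your overall plan---show that $v_\om^\theta$ lies in the top Oseledets space $Y^\theta(\om)$, then read off $\Lam(\theta)$ as the Birkhoff average of $\log|\lam_\om^\theta|$---is exactly the paper's route (via~\cite[Corollary~3.14]{DFGTV1}), and steps~1, 2, 4 of your outline are carried out essentially as you describe. The uniform lower bound on $\|v_{\sigma^n\om}^\theta\|_\B$ is in fact immediate from the normalization $\int v_\om^\theta\,dm=1$, so you do not need~\eqref{lowerbound} there.

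The point of departure is your step~3. The fact that $y_\om(\theta)\neq 0$ (equivalently, that $v_\om^\theta$ spans $Y^\theta(\om)$) is not left as a separate obstacle in the paper: it is obtained inside the proof of Theorem~\ref{cor:quasicompactness} (i.e.\ \cite[Theorem~3.12]{DFGTV1}). That proof constructs an explicit equivariant splitting $\B=\langle v_\om^\theta\rangle\oplus\ker\phi_\om^\theta$ and shows, via uniform Lasota--Yorke inequalities for the twisted cocycle, that $\mcl^{\theta}$ restricted to $\ker\phi_\om^\theta$ has top Lyapunov exponent strictly below $\hat\Lam(\theta)$. Uniqueness of the Oseledets splitting then forces $Y^\theta(\om)=\langle v_\om^\theta\rangle$; both one-dimensionality and the identification with $v_\om^\theta$ come out simultaneously. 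So once you cite Theorem~\ref{cor:quasicompactness} you may in fact take $v_\om^\theta\in Y^\theta(\om)$ as already established, and the Birkhoff computation finishes the argument.

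Your proposed alternative via an adjoint IFT is not wrong in spirit, but as you yourself flag, it is circular as written: asserting that the IFT-constructed $\phi_\om^\theta$ lies in $Y^{*\theta}(\om)$ is the same problem as showing $v_\om^\theta\in Y^\theta(\om)$ on the primal side, and nonvanishing of $\phi_\om^\theta(v_\om^\theta)$ alone does not force either inclusion without the exponential-decay estimate on the complement. The paper resolves this not by a duality trick but by the direct Lasota--Yorke estimate just described.
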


By arguing as in the proof of~\cite[Proposition 2.]{DH}, we have that there exists a positive semi-definite $d\times d$  matrix $\Sigma^2$  such that for $\mathbb P$-a.e. $\om \in \Om$, we have that 
\begin{equation}\label{VAR}
\Sigma^2=\lim_{n\to \infty}\frac 1 n \text{Cov}_\omega (S_n g(\om, \cdot)),
\end{equation}
where $\text{Cov}_\om$ denotes the e covariance with respect to the probability measure $\mu_\omega$. Moreover, the entries $\Sigma^2_{ij}$ of $\Sigma^2$ are given by
\begin{equation}\label{varelem}
\begin{split}
\Sig^2_{ij} &=\int_{\Omega \times X}g^i(\om, x)g^j (\om, x)\, d\mu(\om, x)+\sum_{n=1}^\infty \int_{\Omega \times X} g^i (\om, x)g^j (\tau^n (\om, x))\, d\mu(\om, x)\\
&\phantom{=}+\sum_{n=1}^\infty \int_{\Omega \times X} g^j (\om, x)g^i (\tau^n (\om, x))\, d\mu(\om, x).
\end{split}
\end{equation}
We also recall that $\Sig^2$ is positive definite if and only if $g$ does not satisfy that
\[
v\cdot g=r-r\circ \tau \quad \text{$\mu$-a.e.,}
\]
for all $v\in \R^d$, $v\neq 0$ and some $r\in L^2_{\mu}(\Omega \times X)$.

\begin{lemma}\label{lem:Lam''0}
We have that $\Lambda$ is of class $C^2$ on a neighborhood of $0$ and  $D^2 \Lam(0)=\Sig^2$, where $D^2\Lam (0)$ denotes the Hessian of $\Lambda$ in $0$.

\end{lemma}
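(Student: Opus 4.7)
The plan is to invoke Lemma~\ref{cor:LamHatLam} to reduce the statement to $\hat\Lambda$: since $\Lambda = \hat\Lambda$ on a neighborhood of $0$, it suffices to prove that $\hat\Lambda$ is $C^2$ near $0$ and that its Hessian at $0$ equals $\Sigma^2$. For the $C^2$ regularity, Lemma~\ref{thm:IFT} gives that $\theta\mapsto v^\theta$ is analytic as a map into $\mathcal S$; together with (V3) this implies that $\theta\mapsto v_\omega^\theta\in L^\infty(X,m)$ is analytic with derivatives bounded uniformly in $\omega$. Combined with the uniform bound $|e^{\theta\cdot g(\omega,\cdot)}|\le e^{|\theta|M}$ coming from \eqref{obs}, this shows that for each $\omega$ the scalar function $\theta\mapsto\lambda_\omega^\theta$ defined by \eqref{eq:int} is analytic, with partial derivatives dominated uniformly in $\omega$. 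Since $\lambda_\omega^0 = 1$, shrinking the neighborhood of $0$ ensures $|\lambda_\omega^\theta|\ge 1/2$ uniformly in $\omega$; hence $\log|\lambda_\omega^\theta|$ is $C^2$ in $\theta$ with uniformly dominated derivatives, and dominated convergence justifies differentiating under the integral in \eqref{eq:hatLam} to obtain $\hat\Lambda\in C^2$.

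Restricting to real $\theta$ near $0$ (where $\lambda_\omega^\theta>0$, so $\log|\lambda_\omega^\theta|=\log\lambda_\omega^\theta$), the logarithmic derivative identity reads
\[
D_iD_j\log\lambda_\omega^\theta = \frac{D_iD_j\lambda_\omega^\theta}{\lambda_\omega^\theta} - \frac{D_i\lambda_\omega^\theta\, D_j\lambda_\omega^\theta}{(\lambda_\omega^\theta)^2}.
\]
At $\theta=0$ we have $\lambda_\omega^0=1$, and the crucial observation is that $D_i\lambda_\omega^0 = 0$ for $\mathbb P$-a.e.\ $\omega$: from \eqref{eq:int},
\[
D_i\lambda_\omega^0 = \int_X g^i_\omega v_\omega^0\,dm + \int_X D_iv_\omega^0\,dm = 0,
\]
the first term vanishing by \eqref{zeromean} and the second because $D_iO(0)\in\mathcal S$. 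Consequently $D_iD_j\hat\Lambda(0) = \int_\Omega D_iD_j\lambda_\omega^0\,d\mathbb P$, and expanding \eqref{eq:int} once more (noting $\int D_iD_jO(0)_\omega\,dm = 0$) yields
\[
D_iD_j\lambda_\omega^0 = \int g^i_\omega g^j_\omega v_\omega^0\,dm + \int g^i_\omega (D_jv_\omega^0)\,dm + \int g^j_\omega (D_iv_\omega^0)\,dm.
\]

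Finally, I would identify $D_iv_\omega^0 = D_iO(0)_\omega$ through the implicit function formula $D_iO(0) = -D_{d+1}F(0,0)^{-1}(D_iF(0,0))$. A direct computation from \eqref{defF}, in which \eqref{zeromean} kills the derivative of the denominator, yields $D_iF(0,0)(\omega,\cdot) = \mathcal L_{\sigma^{-1}\omega}(g^i_{\sigma^{-1}\omega}v_{\sigma^{-1}\omega}^0)$, and applying \eqref{inverse} (whose convergence is ensured by \ref{cond:dec}) produces the Green--Kubo series
\[
D_iv_\omega^0 = \sum_{n=1}^\infty \mathcal L^{(n)}_{\sigma^{-n}\omega}\bigl(g^i_{\sigma^{-n}\omega}v_{\sigma^{-n}\omega}^0\bigr).
\]
Using duality for the transfer operator and the change of variables $\omega\mapsto\sigma^n\omega$ (which preserves $\mathbb P$), each summand transforms into $\int_{\Omega\times X} g^i(\omega,x)g^j(\tau^n(\omega,x))\,d\mu$, and together with the symmetric term for $D_jv_\omega^0$ and with $\int g^i_\omega g^j_\omega v_\omega^0\,dm$ integrated against $\mathbb P$, this reproduces exactly the three sums in \eqref{varelem}, giving $D^2\Lambda(0)=\Sigma^2$. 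The main technical obstacles will be the uniform-in-$\omega$ regularity needed to differentiate under the integral (handled via Lemma~\ref{thm:IFT} and the bounds of Lemma~\ref{lem:boundedv}) and the termwise integration in the Green--Kubo series, which rests on the exponential decay \ref{cond:dec}.
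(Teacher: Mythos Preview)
Your proposal is correct and follows essentially the same route as the paper's proof: both differentiate $\log\lambda_\omega^\theta$ under the $\mathbb P$-integral, use the centering condition and $D_iO(0)\in\mathcal S$ to kill the first derivatives, compute $D_iO(0)$ via the implicit function formula together with the explicit inverse~\eqref{inverse}, and then unwind the resulting Green--Kubo series by duality and the $\sigma$-invariance of $\mathbb P$ to match~\eqref{varelem}. The only cosmetic difference is that you restrict to real $\theta$ (where $\log|\lambda_\omega^\theta|=\log\lambda_\omega^\theta$), whereas the paper carries the $\Re(\cdot)$ throughout; since the final expressions are real this is immaterial.
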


\begin{proof}
By repeating the arguments in the proof of~\cite[Lemma 3.15.]{DFGTV1} one can show that $\Lambda$ is of class $C^2$ and that
\[
D_{ij} \Lam (\theta)=\Re  \bigg ( \int_\Om \bigg ( \frac{D_{ij}\lam_\om^\theta}{\lam_\om^\theta}-\frac{D_i \lam_\om^\theta D_j \lam_\om^\theta}{(\lam_\om^\theta)^2} \bigg ) \, d\mathbb P(\om) \bigg ),
\]
where $D_i\lam_\om^\theta$ denotes the derivative of $\theta \mapsto \lam_\om^\theta$ with respect to $\theta_i$ and $D_{ij}\lam_\om^\theta$ is the derivative of $\theta \mapsto D_j \lam_\om^\theta$ with respect to $\theta_i$. Moreover, using (\ref{eq:int}), the same arguments as in the proof of~\cite[Lemma 3.15.]{DFGTV1} yield that
\[
D_i \lam_\om^\theta=\int_X (g^i (\om, x)e^{\theta \cdot g(\om, x)}v_\om^\theta(x)+e^{\theta \cdot g(\om, x) }(D_i O(\theta))_\om (x))\, dm(x)
\]
and
\[
\begin{split}
D_{ij}\lam_\om^\theta &=\int_X(g^i (\om ,x)g^j (\om, x)e^{\theta \cdot g(\om, x)}v_\om^\theta (x) +g^j (\om, x)e^{\theta \cdot g(\om, x)}(D_i O(\theta))_\om (x))\, dm(x) \\
&\phantom{=}+\int_X (g^i (\om, x)e^{\theta \cdot g(\om, x)}(D_j O(\theta))_\om (x)+e^{\theta \cdot g(\om, x)} (D_{ij}O(\theta))_\om (x))\, dm(x).
\end{split}
\]
Since $D_{ij}O(0)\in S$ for $i, j\in \{1, \ldots, d\}$, we have that 
\begin{equation}\label{zerodd}
\int_X (D_{ij}O(0))_\om \, dm=0, \quad \text{for $\mathbb P$-a.e. $\om \in \Om$ and $i, j\in \{1, \ldots, d\}$.}
\end{equation}
From~\eqref{zeromean} and~\eqref{zerod}, we conclude that $D_i \lam_\om^\theta \rvert_{\theta=0}=0$ and 
\[
\begin{split}
D_{ij}\lam_\om^\theta \rvert_{\theta=0} &=\int_X g^i (\om ,x)g^j (\om, x)\, d\mu_\om (x) \\
&\phantom{=}+\int_X  (g^j (\om, x) (D_i O(0))_\om (x)+g^i(\om, x)(D_j O(0))_\om (x))\, dm(x).
\end{split}
\]
Hence,
\[
\begin{split}
D_{ij} \Lam (0) &= \Re \bigg (\int_{\Omega \times X} g^i (\om ,x)g^j (\om, x)\, d\mu(\om, x) +\int_{\Om} \int_X g^j (\om, x) (D_i O(0))_\om (x)\, dm(x)\, d\mathbb P(\om) \\
&\phantom{=}+\int_{\Om} \int_X g^i (\om, x) (D_j O(0))_\om (x)\, dm(x)\, d\mathbb P(\om) \bigg ).
\end{split}
\]
On the other hand, by the implicit function theorem, we have that
\[
D_i O(0)_\om=-(D_{d+1} F(0,0)^{-1}(D_i F(0,0)))_\om.
\]
Furthermore, \eqref{inverse} implies that
\[
(D_{d+1}F(0,0)^{-1}\mathcal W)_\om=-\sum_{n=0}^\infty \mathcal L_{\sigma^{-n}\om}^{(n)} (\mc W_{\sigma^{-n} \om}),
\]
for each $\mc W\in \mc S$. Hence, it follows from Proposition~\ref{difF} that 
 \[
D_i O(0)_\om=\sum_{n=1}^\infty \mathcal L_{\sigma^{-n}  \om}^{(n)} (g^i(\sigma^{-n} \om, \cdot) v_{\sigma^{-n} \om}^0(\cdot)).
\]
Consequently, since  $\sigma$ preserves $\mathbb P$, we have that
\[
\begin{split}
& \int_{\Om} \int g^j (\om, x) (D_i O(0))_\om (x)\, dm(x)\, d\mathbb P(\om) \\
 &=\sum_{n=1}^\infty \int_{\Om} \int_X  g^j (\om, x)\mathcal L_{\sigma^{-n}  \om}^{(n)} (g^i(\sigma^{-n} \om, \cdot) v_{\sigma^{-n} \om}^0)\, dm(x) \, d\mathbb P(\om) \\
&=\sum_{n=1}^\infty \int_{\Om} \int_X g^j (\om, T_{\sigma^{-n} \om}^{(n)}x)g^i (\sigma^{-n} \om, x)\, d \mu_{\sigma^{-n} \om} (x)\, d\mathbb P(\om) \\
&=\sum_{n=1}^\infty \int_{\Om} \int_X g^j (\sigma^n \om, T_{ \om}^{(n)}x)g^i ( \om, x)\, d \mu_{ \om} (x)\, d\mathbb P(\om) \\
&=\sum_{n=1}^\infty \int_{\Omega \times X} g^i(\om, x)g^j (\tau^n (\om, x))\, d\mu (\om, x).
\end{split}
\]
Thus, $D_{ij}\Lam (0)=\Sig_{ij}^2$ and the conclusion of the lemma follows. 
\end{proof}

\section{Limit theorems}
In this section we establish the main results of our paper. More precisely, we prove a number of limit laws for a broad classes of random piecewise dynamics and  for vector-valued observables. In particular, we prove the large deviations principle, central limit theorem and the local limit theorem, thus extending the main results in~\cite{DFGTV1} from scalar to vector-valued observables. In addition, we prove a number of additional limit laws that have not been discussed earlier. Namely, we establish the moderate deviations principle, concentration inequalities, self-normalized Berry-Esseen bounds as well as Edgeworth and large deviations (LD) expansions. 
\subsection{Choice of bases for top Oseledets spaces $Y_\omega ^\theta$ and $Y_\omega ^{*\theta}$}
\label{sec:choiceOsBases}

We recall that $Y_\omega^\theta$ and $Y_\omega^{*\theta}$ are top Oseledets subspaces for twisted and adjoint twisted cocycle, $\mcl^\theta$ and $\mcl^{\theta*}$, respectively.
 The Oseledets decomposition for these cocycles  can be written in the form
\begin{equation}\label{sod}
\BV=Y^\theta_\om \oplus H^\theta_\om  \quad \text{ and } \quad
\BV^* = Y^{*\,\theta}_\om \oplus H^{*\,\theta}_\om,
\end{equation}
where $H^\theta_\om=V^\theta(\omega)\oplus\bigoplus_{j=2}^{l_\theta} Y^\theta_j(\omega)$ is the equivariant complement to $Y^\theta_\om:= Y_1^\theta(\om)$, and $H^{*\,\theta}_\om$ is defined similarly.
Furthermore, Lemma~\ref{lem:AnnihilatorOsSplittings} shows that the following duality relations hold:
\begin{equation}\label{eq:DualityRel}
\begin{split}
\psi(y)&=0 \text{ whenever } y \in Y^\theta_\om \text{ and } \psi \in H^{*\,\theta}_\om,\quad \text{ and }\\
\phi(f) &=0 \text{ whenever } \phi  \in Y^{*\, \theta}_\om \text{ and } f \in  H^{\theta}_\om.
\end{split}
\end{equation}

Let us fix convenient choices for elements of the one-dimensional top Oseledets spaces $Y^\theta_\om$ and $Y^{*\,\theta}_\om$, for $\theta \in \C^d$ close to $0$.
Let  $v_\omega^\theta\in Y^\theta_\om$ be as in \eqref{eq:vomt}, so that $\int v_\om^\theta(\cdot)dm=1$. We recall that
\[
\mcl_\om^\theta v_\om^\theta=\lam_\om^\theta v_{\sigma \om}^\theta \quad \text{for $\mathbb P$-a.e. $\om \in \Om$,}
\]
where
\[
\lam_\om^\theta =\int e^{\theta \cdot g(\om, \cdot)}v_\om^\theta \, dm(x).
\]
Let us fix $\phi^\theta_\omega \in Y^{*\,\theta}_\om$ so that $\phi^\theta_\omega(v^\theta_\omega)=1$. We note that this selection is  possible and unique, because of~\eqref{eq:DualityRel}. Moreover, as in~\cite{DFGTV1} we easily conclude that
\[
(\mathcal{L}^{\theta}_\omega)^*\phi^\theta_{\sigma\omega}=\lambda^\theta_\omega \phi^\theta_{\omega}, \quad \text{for $\mathbb P$-a.e. $\om \in \Om$.}
\]

\subsection{Large deviations properties}

The proof of the following result is identical to the proof of~\cite[Lemma 4.2.]{DFGTV1}.
\begin{lemma}\label{L:growthExpSums}
Let $\theta\in \C^d$ be sufficiently close to 0, so that the results of Section~\ref{sec:choiceOsBases} apply.
Let $f\in \BV$ be such that $f\notin H_\om^\theta$, i.e. $\phi^\theta_\omega (f) \neq 0$.
Then,
\[
\lim_{n\to\infty}\frac{1}{n} \log \Big| \int e^{\theta \cdot  S_ng(\omega,\cdot )}f\ dm \Big| =  \Lam(\theta) \quad \text{for \paeom.}
\]
\end{lemma}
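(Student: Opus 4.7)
The plan is to reduce the integral to the dominant Oseledets direction and exploit the quasi-compactness gap for the twisted cocycle. First, by \eqref{intprop}, we have
\[
\int e^{\theta \cdot S_n g(\omega,\cdot)} f\, dm = \int \mathcal{L}_\omega^{\theta,(n)} f\, dm,
\]
so it suffices to estimate the right-hand side. Using the splitting $\BV = Y_\omega^\theta \oplus H_\omega^\theta$ from \eqref{sod}, I would decompose
\[
f = \phi_\omega^\theta(f)\, v_\omega^\theta + h_\omega, \qquad h_\omega := f - \phi_\omega^\theta(f)\, v_\omega^\theta \in H_\omega^\theta,
\]
where membership in $H_\omega^\theta$ follows from $\phi_\omega^\theta(v_\omega^\theta)=1$ and the duality relations \eqref{eq:DualityRel}.

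Next, I would apply $\mathcal{L}_\omega^{\theta,(n)}$ to both sides, using the equivariance $\mathcal{L}_\omega^{\theta,(n)} v_\omega^\theta = \lambda_\omega^{\theta,(n)} v_{\sigma^n\omega}^\theta$ with $\lambda_\omega^{\theta,(n)} := \prod_{j=0}^{n-1} \lambda_{\sigma^j\omega}^\theta$, and the equivariance $\mathcal{L}_\omega^{\theta,(n)} H_\omega^\theta \subset H_{\sigma^n\omega}^\theta$. Since $\int v_{\sigma^n\omega}^\theta\, dm = 1$, integration yields
\[
\int \mathcal{L}_\omega^{\theta,(n)} f\, dm = \phi_\omega^\theta(f)\, \lambda_\omega^{\theta,(n)} + \int \mathcal{L}_\omega^{\theta,(n)} h_\omega\, dm.
\]
For the first term, Birkhoff's ergodic theorem applied to the $\mathbb{P}$-integrable function $\omega \mapsto \log|\lambda_\omega^\theta|$ (bounded above by Lemma~\ref{l49} and bounded below via the lower bound $\essinf v_\omega^\theta > 0$ inherited from \eqref{lowerbound} after shrinking $\ep$) gives
\[
\frac{1}{n} \log|\lambda_\omega^{\theta,(n)}| = \frac{1}{n}\sum_{j=0}^{n-1} \log|\lambda_{\sigma^j\omega}^\theta| \xrightarrow{n\to\infty} \int_\Omega \log|\lambda_\omega^\theta|\, d\mathbb{P}(\omega) = \hat{\Lambda}(\theta) = \Lambda(\theta),
\]
where the last equality is Lemma~\ref{cor:LamHatLam}. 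Since $\phi_\omega^\theta(f) \neq 0$ by hypothesis, the first term grows exactly at exponential rate $\Lambda(\theta)$.

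For the remainder term, the key observation is that $h_\omega \in H_\omega^\theta$, and by Theorem~\ref{cor:quasicompactness} the twisted cocycle is quasi-compact with $\dim Y_\omega^\theta = 1$, so by the MET there exists $\lambda_\ast(\theta) < \Lambda(\theta)$ (either the second exceptional exponent or, failing that, $\kappa(\mathcal{R}^\theta)$) such that, for any $\delta > 0$,
\[
\limsup_{n\to\infty} \frac{1}{n} \log \|\mathcal{L}_\omega^{\theta,(n)} h_\omega\|_{\BV} \le \lambda_\ast(\theta) < \Lambda(\theta).
\]
Bounding $|\int \mathcal{L}_\omega^{\theta,(n)} h_\omega\, dm| \le \|\mathcal{L}_\omega^{\theta,(n)} h_\omega\|_1 \le \|\mathcal{L}_\omega^{\theta,(n)} h_\omega\|_{\BV}$, the remainder is exponentially dominated by the main term, so the limit equals $\Lambda(\theta)$.

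The main obstacle I anticipate is justifying that the dominant term's exponential rate is genuinely attained and not cancelled. Concretely, I must verify that $\log|\lambda_\omega^\theta| \in L^1(\mathbb{P})$ (needed to invoke Birkhoff), which requires a uniform lower bound on $|\lambda_\omega^\theta|$ for $\theta$ near zero; this follows because $\lambda_\omega^0 \equiv 1$ and $\lambda_\omega^\theta$ is continuous in $\theta$ uniformly in $\omega$ via \eqref{eq:int}, \eqref{lowerbound}, and \eqref{obs}, so $\lambda_\omega^\theta$ stays bounded away from $0$ after shrinking $\ep$. Once this integrability is in hand, the remainder of the argument is a straightforward application of the MET gap for the quasi-compact twisted cocycle.
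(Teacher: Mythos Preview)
Your proof is correct and follows essentially the same route as the paper's (the paper defers to \cite[Lemma~4.2]{DFGTV1}, and the argument is sketched in Section~5 for the hyperbolic case): decompose $f$ along the Oseledets splitting $Y_\omega^\theta\oplus H_\omega^\theta$, identify the growth of the $Y_\omega^\theta$-component with $\Lambda(\theta)$ via $\frac{1}{n}\sum_{j}\log|\lambda_{\sigma^j\omega}^\theta|\to\Lambda(\theta)$, and absorb the $H_\omega^\theta$-remainder using the MET gap. One cosmetic remark: your parenthetical justification for the lower bound on $|\lambda_\omega^\theta|$ via ``$\essinf v_\omega^\theta>0$'' is not quite the right mechanism (that bound is for $v_\omega^0$, and $v_\omega^\theta$ is complex-valued); your later, cleaner explanation---that $\lambda_\omega^0\equiv 1$ and $\theta\mapsto\lambda_\omega^\theta$ is continuous uniformly in $\omega$, as in the proof of Lemma~\ref{lem:FwellDef}---is the correct one.
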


Next, suppose that $\Sig^2$ is positive definite and let $B\subset\mathbb R^d$ be a closed ball around the origin so that $D^2\Lambda(t)$ is positive definite for any $t\in B$ and set
\[
\Lambda^*(x)=\sup_{t\in B}\left(t\cdot x-\Lambda(t)\right).
\]
Observe that the existence of $B$ follows from Lemma~\ref{lem:Lam''0}.
By combining Lemma~\ref{L:growthExpSums} with Theorem~\ref{GEThm}, we obtain the following local large deviations principle.

\begin{thm}\label{LDthm} 
For $\mathbb P$-a.e. $\omega \in \Omega$,  we have:

(i) for any closed set $A\subset\mathbb R^d$,
\[
\limsup_{n\to\infty}\frac1{n}\log \mu_\om(\{S_n g(\om,\cdot)/n\in A \})\leq-\inf_{x\in A}\Lambda^*(x);
\]

(ii) there exists a closed ball $B_0$ around the origin (which does not depend on $\om$) so that for any open subset $A$ of $B_0$ we have
\[
\liminf_{n\to\infty}\frac1{n}\log \mu_\om(\{S_n g(\om,\cdot)/n\in A \})\geq-\inf_{x\in A}\Lambda^*(x).
\]
\end{thm}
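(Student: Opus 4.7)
The plan is to reduce the theorem to a Gärtner--Ellis type statement (Theorem \ref{GEThm}, cited in the statement) by producing the logarithmic moment generating function limit from Lemma \ref{L:growthExpSums} applied to a carefully chosen test function. Concretely, I would take $f = v_\omega^0 \in \B$. At $\theta = 0$ we have $\phi_\omega^0(v_\omega^0) = 1 \neq 0$, so $v_\omega^0 \notin H_\omega^0$. Since the Oseledets projections $\phi^\theta_\omega$ depend continuously on $\theta$ near $0$ (a consequence of the perturbation theory developed in Section 3, together with Theorem \ref{cor:quasicompactness}), shrinking the neighborhood of $0$ we may assume that $\phi_\omega^\theta(v_\omega^0) \neq 0$, i.e.\ $v_\omega^0 \notin H_\omega^\theta$, for all $\theta$ in some neighborhood of $0$. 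Observing that
\[
\int e^{\theta \cdot S_n g(\om,\cdot)} v_\omega^0 \, dm = \int e^{\theta \cdot S_n g(\om,\cdot)} \, d\mu_\omega,
\]
Lemma \ref{L:growthExpSums} yields, for \paeom\ and all $\theta$ sufficiently close to $0$,
\[
\lim_{n\to\infty}\frac{1}{n}\log \int e^{\theta\cdot S_n g(\om,\cdot)}\, d\mu_\omega = \Lambda(\theta).
\]

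Given this pointwise limit of the logarithmic moment generating functions, both parts (i) and (ii) follow from the general large deviations machinery, which is the content of Theorem \ref{GEThm}. For the upper bound (i), one applies the standard exponential Chebyshev/Markov argument: for any $t \in B$ and any closed half-space containing $A$ and avoiding a point $x$, one obtains $\mu_\omega(\{S_ng/n \in A\}) \le \exp(-n(t\cdot x - \tfrac{1}{n}\log\int e^{t\cdot S_n g}d\mu_\omega))$, and optimizing over $t \in B$ together with a standard covering argument on the closed set $A$ produces the Legendre transform $\Lambda^*(x)$ on the right-hand side. Note that only the behavior of $\Lambda$ on $B$ is used, so the restriction of the supremum to $t \in B$ in the definition of $\Lambda^*$ is harmless for the upper bound.

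For the lower bound (ii), the key geometric input is Lemma \ref{lem:Lam''0}, which gives $D\Lambda(0) = 0$ and $D^2\Lambda(0) = \Sigma^2$ positive definite. The inverse function theorem applied to the gradient map $D\Lambda \colon B \to \mathbb{R}^d$ shows that $D\Lambda$ is a $C^1$-diffeomorphism from a smaller ball $B' \subset B$ onto some open neighborhood $B_0$ of $0$. For each $x \in B_0$, the unique $\theta_x \in B'$ with $D\Lambda(\theta_x) = x$ can then be used to perform the standard exponential tilting: one shows, via a change of measure and local weak convergence of the tilted distribution $d\mu_\omega^{\theta_x, n} \propto e^{\theta_x \cdot S_n g}d\mu_\omega$ to a distribution concentrated near $x$, that
\[
\liminf_{n\to\infty}\tfrac{1}{n}\log \mu_\omega(\{S_n g/n \in A\}) \ge \theta_x \cdot x - \Lambda(\theta_x) = \Lambda^*(x)
\]
for $x$ in the open set $A \subset B_0$, followed by taking the supremum over $x\in A$.

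The main obstacle here is ensuring that Theorem \ref{GEThm} applies in the quenched (random) setting with $\Lambda$ defined only on a neighborhood of $0$, and that the ball $B_0$ in (ii) can be chosen independently of $\omega$. The $\omega$-independence of $B_0$ follows because $\Lambda$ itself, together with its derivatives at $0$, do not depend on $\omega$; the domain of the inverse of $D\Lambda$ is determined entirely by deterministic data. The full-measure set of $\omega$ on which the pointwise limit holds can be taken from Lemma \ref{L:growthExpSums} directly; since we only need countably many $\theta$'s (e.g.\ a dense subset of a neighborhood of $0$) together with uniformity in $\theta$ coming from the analyticity and regularity of $\theta \mapsto \Lambda(\theta)$, no further measurability subtleties arise.
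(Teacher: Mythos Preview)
Your proposal is correct and follows essentially the same approach as the paper: the paper's proof of Theorem~\ref{LDthm} is a one-line reference stating that the result follows by combining Lemma~\ref{L:growthExpSums} with Theorem~\ref{GEThm}. Your write-up simply unpacks this, making explicit the choice $f=v_\omega^0$ (so that $\int e^{\theta\cdot S_n g}\,v_\omega^0\,dm=\int e^{\theta\cdot S_n g}\,d\mu_\omega$), the nondegeneracy $\phi_\omega^\theta(v_\omega^0)\neq 0$ for $\theta$ near $0$, and the verification of Assumption~\ref{LD-AS} via Lemmas~\ref{cor:LamHatLam} and~\ref{lem:Lam''0}; the sketch of how Theorem~\ref{GEThm} runs is extra commentary rather than a different method.
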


\begin{rmk}
In the scalar case, for $\mathbb P$-a.e. $\om \in \Om$ and  for any sufficiently small $\ve>0$, we have (see~\cite[Lemma XIII.2.]{HennionHerve}) that 
\[
\lim_{n\to\infty}\frac1n \log \mu_\om (\{x: S_ng(\om,x)>n\ve\})=-\Lambda^*(\ve).
\]
The above conclusion was already obtained in~\cite[Theorem A]{DFGTV1}.

 In the multidimensional case we can apply \cite[Theorem 3.2.]{StLD}, and conclude that for any box $A$ around the origin with a sufficiently small diameter, 
\[
\lim_{n\to\infty}\frac1n \log \mu_\om(\{S_ng(\om,\cdot)/n\notin A \})=-\inf_{a\in\partial A}\Lambda^*(a).
\]
We also refer the reader to  \cite[Theorem 3.1.]{StLD} which, in particular, deals with the asymptotic behavior of probabilities of the form $\mu_\om(\{S_ng(\om,\cdot)/n\in C\})$, where $C$ is a cone with a  non-empty interior.
\end{rmk}

Next, we establish the following (optimal) global moderate deviations principle. 
Let $(a_n)_n$ be a sequence in $\R$ such that $\lim_{n\to\infty}\frac{a_n}{\sqrt n}=\infty$ and 
$\lim_{n\to\infty}\frac{a_n}n=0$. 

\begin{thm}\label{MDthm}
For $\mathbb P$-a.e.  $\omega \in \Omega$ and any $\theta\in\mathbb R^d$,  we have that
\[
\lim_{n\to\infty}\frac{1}{a_n^2/n}\log \mathbb  E[e^{\theta \cdot  S_ng (\om, \cdot)/c_n}]=\frac 1 2\theta^T\Sigma^2\theta,
\]
where $c_n=n/a_n$.
Consequently, when $\Sig^2$ is positive definite we have that:
\begin{enumerate}
\item[(i)] for any closed set $A\subset\mathbb R^d$,
\[
\limsup_{n\to\infty}\frac{1}{a_n^2/n}\log  \mu_\omega(\{S_n g(\om,\cdot)/a_n\in A\})\leq -\frac 1 2 \inf_{x\in A}x^T\Sigma^{-2} x;
\]
\item[(ii)] for any open set $A\subset\mathbb R^d$ we have
\[
\liminf_{n\to\infty}\frac{1}{a_n^2/n}\log  \mu_\omega(\{S_n g(\om,\cdot)/a_n\in A\})\geq-\frac 1 2 \inf_{x\in A}x^T\Sigma^{-2} x,
\]
where $\Sig^{-2}$ denotes the inverse of $\Sig^2$.
\end{enumerate}
\end{thm}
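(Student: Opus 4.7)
The plan is to first prove the logarithmic moment generating function limit via the spectral decomposition of the twisted cocycle $\mcl^\tau$ for $\tau$ near $0$, and then deduce parts (i) and (ii) from the G\"artner--Ellis theorem. Set $\tau_n := \theta/c_n = \theta a_n/n$, which tends to $0$. By \eqref{intprop} and $d\mu_\om = v_\om^0\, dm$,
\[ \E\bigl[e^{\theta\cdot S_n g(\om,\cdot)/c_n}\bigr] = \int_X \mcl_\om^{\tau_n,(n)}(v_\om^0)\, dm. \]
For $n$ large enough that $\tau_n$ lies in the neighborhood of $0$ supplied by Theorem~\ref{cor:quasicompactness}, decompose $v_\om^0 = \phi_\om^{\tau_n}(v_\om^0)\, v_\om^{\tau_n} + h_\om^{\tau_n}$ along $Y_\om^{\tau_n} \oplus H_\om^{\tau_n}$ as in Section~\ref{sec:choiceOsBases}. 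Using equivariance of $Y^\theta$, the normalization $\int v_\om^\theta\, dm = 1$, and the spectral gap (which persists near $\tau = 0$) to show that the contribution of $h_\om^{\tau_n}$ is exponentially smaller than the main term, one obtains
\[ \log \E\bigl[e^{\theta\cdot S_n g/c_n}\bigr] = \sum_{k=0}^{n-1}\log \lambda_{\sig^k\om}^{\tau_n} + O(1), \]
where the $O(1)$ error is bounded in $n$ for $\paeom$ (it absorbs $\log \phi_\om^{\tau_n}(v_\om^0)$, which tends to $\log \phi_\om^0(v_\om^0) = 0$).

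Next, Taylor-expand $\tau \mapsto \log \lot$ around $\tau = 0$. Since $\lambda_\om^0 = 1$ and, by the fiberwise centering \eqref{zeromean} combined with the computation in the proof of Lemma~\ref{lem:Lam''0}, $D_i\lot|_{\tau=0} = \int g^i(\om,\cdot)\, v_\om^0\, dm = 0$ for every $i$, we may write
\[ \log \lot = \tfrac12 \tau^T B_\om \tau + R_\om(\tau), \]
with $B_\om := D^2 \log \lot|_{\tau=0}$ uniformly bounded in $\om$ and $\sup_\om |R_\om(\tau)|/|\tau|^2 \to 0$ as $\tau \to 0$; the required uniformity is extracted from the uniform-in-$\om$ analytic estimates for $\theta \mapsto v_\om^\theta$ and $\theta \mapsto \phi_\om^\theta$ furnished by Lemma~\ref{thm:IFT}. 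By Lemmas~\ref{cor:LamHatLam} and~\ref{lem:Lam''0}, $\Lam(\tau) = \E[\log \lot]$ near $0$ and $D^2\Lam(0) = \Sig^2$, so $\E[B_\om] = \Sig^2$. Applying Birkhoff's ergodic theorem to $\om \mapsto B_\om$ yields $\sum_{k=0}^{n-1} B_{\sig^k\om} = n\Sig^2 + o(n)$ $\bbp$-a.s., and therefore
\[ \sum_{k=0}^{n-1}\log \lambda_{\sig^k\om}^{\tau_n} = \tfrac{n}{2}\tau_n^T \Sig^2 \tau_n + o(n|\tau_n|^2) = \tfrac{a_n^2}{n}\!\left(\tfrac12 \theta^T\Sig^2\theta + o(1)\right). \]
Dividing through by $a_n^2/n \to \infty$ (which swallows the $O(1)$ term from the spectral projection) establishes the claimed moment generating function limit.

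Finally, since $H(\theta) := \tfrac12\theta^T \Sig^2\theta$ is a finite, differentiable, strictly convex function on $\bbR^d$ (in particular essentially smooth), parts (i) and (ii) will follow directly from the G\"artner--Ellis theorem applied to $S_n g(\om,\cdot)/a_n$ at speed $a_n^2/n$; positive-definiteness of $\Sig^2$ guarantees that the Legendre--Fenchel transform is $H^*(x) = \tfrac12 x^T \Sig^{-2} x$, giving the stated rate function. The principal obstacle is the remainder estimate $\sup_\om |R_\om(\tau)| = o(|\tau|^2)$: although $\tau \mapsto \lot$ is analytic fiberwise, the Birkhoff-sum step requires the $o(\cdot)$ control to be \emph{uniform} in $\om$ along the shrinking sequence $\tau_n$, which must be derived from the uniform analytic dependence of the top Oseledets data $(v_\om^\theta, \phi_\om^\theta)$ on $\theta$ established in the preceding subsection.
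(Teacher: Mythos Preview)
Your proposal is correct and takes essentially the same approach as the paper: decompose via the top Oseledets projection to isolate $\prod_k \lambda_{\sigma^k\om}^{\tau_n}$ up to an $O(1)$ error, Taylor-expand $\log\lambda_\om^\theta$ at $\theta=0$ (where the gradient vanishes by fiberwise centering), control the quadratic term via ergodic averaging, and conclude with G\"artner--Ellis. The only cosmetic difference is that the paper handles the second-order term through the relation $\lvert D^2\Pi_{\om,n}(0)-\mathrm{Cov}_{\mu_\om}(S_ng)\rvert\le C$ (from the Cauchy integral formula) together with $\mathrm{Cov}_{\mu_\om}(S_ng)/n\to\Sigma^2$, whereas you apply Birkhoff directly to the fiberwise Hessian $B_\om$; since $\sum_{k<n}B_{\sigma^k\om}=D^2\Pi_{\om,n}(0)$ these are the same computation, and the uniform-in-$\om$ cubic remainder you flag as the ``principal obstacle'' is exactly what the paper extracts from the uniform analyticity of $\theta\mapsto\lambda_\om^\theta$.
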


\begin{proof}
Let $\Pi_\omega(\theta)$ be an analytic branch of $\log \lambda_\omega^\theta$ around $0$ so that $\Pi_\omega(0)=0$ and $|\Pi_\omega(\theta)|\leq c$ for some $c>0$. Note that it is indeed possible to construct such functions $\Pi_\om$ in a deterministic neighborhood of $0$ since $\lam_\om^0=1$ and $\te\to\lam_\om^\te$ are analytic functions which are uniformly bounded around the origin. Set $\Pi_{\omega,n}(\theta)=\sum_{j=0}^{n-1}\Pi_{\sigma^j\om}(\theta)$.
Then $\nabla\Pi_\omega(0)=\nabla\lambda_\omega^{\theta}|_{\theta=0}=0$ (see the proof of Lemma \ref{lem:Lam''0})
and hence
\begin{equation}\label{D1 Pi}
\nabla\Pi_{\omega,n}(0)=0.
\end{equation}
 By applying $\mcl_\om^{\theta, (n)}$ to the identity  $v_\om^0=\phi_\om^\theta (v_\om^0)v_\om^\theta+ (v_\om^0-\phi_\om^\theta (v_\om^0)v_\om^\theta)$ and integrating with respect to $m$, we obtain that 
\begin{equation}\label{Basic relation}
\int_X e^{\theta \cdot S_ng(\omega,\cdot)}d\mu_\omega=\int_X \mcl_\om^{\theta, (n)} v_\om^0\, dm=
\phi_\omega^{\theta}(v_\omega^0)e^{\Pi_{\omega,n}(\theta)}+\int_X\mathcal L^{\theta,(n)}_\omega(v_\omega^{0}-\phi_\omega^{\theta}(v_\omega^0)v_\omega^\theta)dm.
\end{equation}
By Lemma \ref{lem:UnifExpDecayY2} the second term in the above right hand side is $O(r^n)$ uniformly in $\omega$ and $\theta$ (around the origin), for some $0<r<1$. Using  the Cauchy integral formula we get that
\begin{equation}\label{D2 Pi}
\left|D^2\Pi_{\omega,n}(0)-\text{Cov}_{\mu_\omega}(S_ng(\omega,\cdot))\right|\leq C,
\end{equation}
where $C$ is some constant which does not depend on $\omega$ and $n$. In the derivation of (\ref{D2 Pi}) we have also used that the function $\theta\to\phi_\om^\theta(v_\om^0)$ is analytic and uniformly bounded in $\om$, which can be proved as in~\cite[Appendix C]{DFGTV1}, using again the complex analytic implicit function theorem. 

Next, let $\theta\in\mathbb R^d$ and set $\theta_n=\theta/c_n$, where $c_n=n/a_n$ and $(a_n)_n$ is the sequence from the statement of the theorem. Then $\lim_{n\to\infty}c_n=\infty$ and $\lim_{n\to\infty}c_n^2/n=0$. Set $\Sigma^2_{\omega,n}=\text{Cov}_{\mu_\omega}(S_ng(\omega,\cdot))$. By (\ref{D2 Pi}), when $n$ is sufficiently large we can write 
\[
\Pi_{\omega,n}(\theta_n)=\frac12\theta_n^T\Sigma^2_{\omega,n}\theta_n+\mathcal O(|\theta_n|^2)+\mathcal O(n|\theta_n|^3).
\]
Therefore,
\[
\lim_{n\to\infty}\frac{c_n^2}{n}\Pi_{\omega,n}(\theta_n)=\frac12\theta^T\Sigma^2\theta.
\]
This together with (\ref{Basic relation}) implies that 
\[
\lim_{n\to\infty}\frac{c_n^2}{n}\log  \mathbb E[e^{\theta \cdot S_n g(\om, \cdot)/c_n}]=\lim_{n\to\infty}\frac{c_n^2}{n}\Pi_{\omega,n}(\theta_n)=\frac 12\theta^T\Sigma^2\theta.
\]
The upper and lower large deviations bounds follow now from the Gartner-Ellis Theorem (see~\cite[Theorem 2.3.6.]{DemZet}).
\end{proof}

Theorems \ref{LDthm} and \ref{MDthm} deal with the asymptotic behavior of probabilities of rare events on  an exponential scale. We will also obtain more explicit (but not tight) exponential upper bounds.

\begin{proposition}\label{ConcenProp}
There exist constants $c_1,c_2>0$ such  that for $\mathbb P$-a.e. $\om \in \Om$,  for any $\varepsilon>0$ and $n\in \N$ we have
\[
\mu_\omega (\{x\in X: |S_ng(\omega,x)|\geq \varepsilon n+c_1\} )\leq 2de^{-c_2 \varepsilon^2 n}.
\]
\end{proposition}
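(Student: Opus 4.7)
My plan is to reduce to a component-wise sub-Gaussian concentration bound via a union bound and then apply Chernoff's inequality to the exponential moments produced by the twisted-transfer-operator machinery already used in the proof of Theorem~\ref{MDthm}. The event $\{|S_n g(\om, \cdot)| \geq \varepsilon n + c_1\}$ is contained in $\bigcup_{i=1}^d \{|S_n g^i(\om, \cdot)| \geq (\varepsilon n + c_1)/\sqrt d\}$ (for the Euclidean norm), so a union bound over the $d$ coordinates and the two signs reduces the proposition to producing uniform constants $c_1, c_2 > 0$ such that
\[
\mu_\omega \bigl( \bigl\{ \pm S_n g^i(\om, \cdot) \geq \varepsilon n + c_1 \bigr\} \bigr) \leq e^{-c_2 \varepsilon^2 n}
\]
for every $i$, sign, $\bbp$-a.e.\ $\om$, every $\varepsilon > 0$, and every $n \in \N$ (possibly after inflating $c_1, c_2$ by factors depending only on $d$).

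First I would establish a uniform sub-Gaussian exponential-moment estimate of the form
\[
\int_X e^{t S_n g^i(\om, \cdot)} \, d\mu_\om \leq A \, e^{C n t^2}, \qquad t \in [-t_0, t_0],
\]
with $A, C, t_0 > 0$ independent of $\om, n, i$. This is a direct consequence of (\ref{Basic relation}) applied with $\theta = t e_i$, together with three ingredients already exploited in the proof of Theorem~\ref{MDthm}: the remainder in (\ref{Basic relation}) is $O(r^n)$ uniformly in $(\om, t)$ for some $r\in(0,1)$; the map $t \mapsto \phi_\om^{te_i}(v_\om^0)$ is uniformly bounded on a deterministic neighborhood of $0$ (as in~\cite[Appendix~C]{DFGTV1}); and the analytic branch $\Pi_\om$ of $\log \lambda_\om^\theta$ satisfies $\Pi_\om(0)=0$, $\nabla \Pi_\om(0)=0$ (by \eqref{D1 Pi}) and is uniformly bounded on a deterministic complex neighborhood of $0$, whence Cauchy's estimates force $|\Pi_\om(t e_i)|\leq C t^2$ on $[-t_0, t_0]$ and hence $|\Pi_{\om, n}(t e_i)|\leq C n t^2$.

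Chernoff's inequality then gives
\[
\mu_\om(\{S_n g^i \geq \varepsilon n + c_1\}) \leq A \, e^{-t(\varepsilon n + c_1) + C n t^2}, \qquad t \in [0, t_0].
\]
Optimizing with $t = \varepsilon/(2C)$ in the range $\varepsilon \leq 2 C t_0$, and taking $t = t_0$ together with the deterministic a priori bound $|S_n g^i|\leq M n$ (which follows from $\|g\|_\infty \leq M$ and cuts off the tail at $s = M n$) to handle the complementary range $\varepsilon \geq 2C t_0$, I obtain a uniform tail bound of the form $A \, e^{-c(\varepsilon n + c_1)^2/n}$ for some $c > 0$. Choosing $c_2 < c$ and $c_1$ large enough absorbs the multiplicative constant $A$ on the nontrivial range, noting that the proposition is automatic whenever $2d \, e^{-c_2 \varepsilon^2 n} \geq 1$, i.e.\ whenever $\varepsilon^2 n \leq c_2^{-1}\log(2d)$. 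The only real obstacle is bookkeeping: converting the natural tail estimate $A e^{-c\varepsilon^2 n}$ into the clean form with shift $c_1$ prescribed by the statement; no new analytic input beyond the proofs of Lemma~\ref{lem:Lam''0} and Theorem~\ref{MDthm} is required.
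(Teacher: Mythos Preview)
Your approach is correct and genuinely different from the paper's. The paper does not use the twisted-transfer-operator machinery here at all: instead it reduces to the scalar case and invokes a martingale approximation from~\cite[(51)]{DH}, obtaining a reverse martingale $\mathbf{M}_n=X_1+\cdots+X_n$ with uniformly bounded differences and $\sup_n\|S_ng(\omega,\cdot)-\mathbf{M}_n\|_{L^\infty(m)}\le C$; Azuma--Hoeffding then gives $\mathbb E_\omega[e^{\lambda\mathbf{M}_n}]\le e^{\lambda^2c^2n}$ with no multiplicative prefactor, and the constant $c_1$ emerges naturally from the approximation error $C$.

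Your route stays entirely inside the spectral framework already built for Theorem~\ref{MDthm}: the sub-Gaussian bound $\int e^{tS_ng^i}\,d\mu_\omega\le Ae^{Cnt^2}$ on $[-t_0,t_0]$ follows from \eqref{Basic relation}, the uniform $O(r^n)$ remainder, the uniform boundedness of $\theta\mapsto\phi_\omega^\theta(v_\omega^0)$, and the second-order vanishing of $\Pi_\omega$ at $0$ together with Cauchy's estimates. This has the advantage of being self-contained within the paper (no need to import the martingale construction from~\cite{DH}), and in fact highlights that the concentration inequality is a direct consequence of the same analytic input that drives the moderate deviations principle. The price is the bookkeeping you flag: you must absorb the prefactor $A$ by splitting into the trivial regime $\varepsilon^2n\le c_2^{-1}\log(2d)$ and the regime where $e^{-(1/(4C)-c_2)\varepsilon^2n}$ dominates $A$, and handle $\varepsilon>2Ct_0$ by either taking $t=t_0$ or using the crude bound $|S_ng^i|\le Mn$. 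The paper's martingale argument avoids these case distinctions because Azuma--Hoeffding produces $A=1$ from the outset and the shift $c_1$ is exactly the $L^\infty$ approximation error.
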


\begin{proof}
It is sufficient to establish the desired conclusion in the case when  $g$ is real-valued. Then by~\cite[(51)]{DH} there is a reverse martingale $\textbf{M}_n=X_1+...+X_n$ (which depends on $\omega$) with the following properties:
\begin{itemize}
\item there exists $c>0$ independent on $\om$ such that $\lVert X_i\rVert_{L^\infty (m)} \le c$;
\item there exists $C>0$ independent on $n$ and $\om$ such that
\begin{equation}\label{MartApp}
\sup_n \lVert S_n g(\omega,\cdot)-\textbf{M}_n(\cdot)\rVert_{L^\infty(m)}\leq C.
\end{equation}
\end{itemize}
The proof of the proposition is completed now using the Chernoff bounding method. More precisely, by 
 applying the Azuma-Hoeffding inequality with the martingale differences $Y_{k}=X_{n-k}$ we get that for any $\lambda>0$,
\[
\mathbb E_\om [e^{\lambda \textbf{M}_n}]\leq e^{\lambda^2c^2 n}.
\]
Therefore, by the Markov inequality we have that
\[
\mu_\omega (\{\textbf{M}_n\geq \varepsilon n \})=\mu_\om(\{e^{\lambda\textbf{M}_n}\geq e^{\lambda \varepsilon n}\})\leq 
e^{n(\lambda^2 c^2-\lambda\varepsilon)}.
\]
By taking $\lambda=\frac{\varepsilon}{2c^2}$, we obtain that $\mu_\om (\{\textbf{M}_n\geq \varepsilon n \})\leq e^{-\frac{\varepsilon^2}{4c^2}n}$.
Furthermore, by replacing $\textbf{M}_n$ with $-\textbf{M}_n$ we derive that
\[
\mu_\om (\{|\textbf{M}_n|\geq \varepsilon n \})\leq 2e^{-\frac{\varepsilon^2}{4c^2}n}.
\]
The proof of the proposition is completed using (\ref{MartApp}).
\end{proof}
\begin{rmk}
We remark that we can get upper bounds on the constants $c$ and $C$ appearing in the above proof, and so we can express $c_1$ and $c_2$ in terms of the parameters appearing in (V1)-(V8) and (C1)-(C5).
\end{rmk}

\subsection{Central limit theorem}
We need the following lemma. 
\begin{lemma}\label{lem:UnifExpDecayY2}
There exist $C>0$ and $0<r<1$ such that for every $\theta \in \mathbb{C}^d$ sufficiently close to 0, every $n\in \N$ and $\paeom$, we have
\begin{equation}
\Big| \int\mathcal L_\om^{\theta, (n)}(v_\om^0 -\phi_\om^{\theta}(v_\om^0) v_{\om}^{\theta})\, dm \Big|
\leq Cr^n \lvert \theta \rvert.
\end{equation}
\end{lemma}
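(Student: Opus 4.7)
The plan is to write $f_\om^\theta := v_\om^0 - \phi_\om^\theta(v_\om^0) v_\om^\theta$ and establish two properties of this element: (a) $\lVert f_\om^\theta\rVert_\B \le C|\theta|$ uniformly in $\om$; and (b) $f_\om^\theta \in H_\om^\theta$, so that the twisted cocycle contracts it at a uniform exponential rate. The conclusion then follows from the trivial bound $\bigl|\int h\,dm\bigr| \le \lVert h\rVert_1 \le \lVert h\rVert_\B$.

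Property (a) is essentially analytic in nature. At $\theta=0$ one has $v_\om^0 = v_\om^\theta|_{\theta=0}$ and $\phi_\om^0(v_\om^0)=1$, so $f_\om^0=0$. By~\eqref{eq:vomt} and Lemma~\ref{thm:IFT}, $\theta\mapsto v_\om^\theta$ is a $\B$-valued analytic map of uniform norm in $\om$ (using also \eqref{eq:boundedv}); an analogous implicit-function-theorem construction for the dual eigenfunctional (cf.\ Appendix~C of~\cite{DFGTV1}) shows that $\theta\mapsto \phi_\om^\theta(v_\om^0)$ is analytic and uniformly bounded in $\om$. Hence $\theta\mapsto f_\om^\theta$ is $\B$-valued analytic on a fixed neighbourhood of the origin and vanishes at $\theta=0$, so the Cauchy estimate on a disc of fixed radius gives $\lVert f_\om^\theta\rVert_\B\le C|\theta|$. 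Property (b) is immediate from the normalisation $\phi_\om^\theta(v_\om^\theta)=1$: indeed, $\phi_\om^\theta(f_\om^\theta)=\phi_\om^\theta(v_\om^0)-\phi_\om^\theta(v_\om^0)\phi_\om^\theta(v_\om^\theta)=0$, and the duality relations \eqref{eq:DualityRel} identify $\ker \phi_\om^\theta \cap \B$ with $H_\om^\theta$.

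The heart of the argument --- and what I expect to be the principal obstacle --- is to establish uniform exponential contraction on the equivariant family $H^\theta$, i.e.\ constants $C'>0$ and $r\in(0,1)$, independent of $\om$ and of $\theta$ in a sufficiently small neighbourhood of $0$, such that
\[
\lVert \mcl_\om^{\theta,(n)} f\rVert_\B \le C'r^n\lVert f\rVert_\B, \qquad f\in H_\om^\theta.
\]
For $\theta=0$ this is exactly condition~\ref{cond:dec}, since $H_\om^0 = \{f\in\B : \int f\,dm=0\}$. For $\theta$ near $0$ one argues by perturbation: the rank-one projectors $\Pi_\om^\theta f := \phi_\om^\theta(f)\,v_\om^\theta$ onto $Y_\om^\theta$ depend analytically on $\theta$ uniformly in $\om$, so $\Pi_\om^\theta - \Pi_\om^0 = O(|\theta|)$ in operator norm, and an equivariant conjugation transfers the $\theta=0$ decay rate on mean-zero functions to $H_\om^\theta$ with a slightly worse but still uniform rate $r<1$. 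Combining this uniform contraction with the bound $\lVert f_\om^\theta\rVert_\B \le C|\theta|$ from (a) gives $\lVert \mcl_\om^{\theta,(n)}f_\om^\theta\rVert_\B \le CC' r^n|\theta|$, and integrating against $m$ yields the lemma. The subtlety is that both the constants $C',r$ and the subspace $H_\om^\theta$ must be controlled \emph{simultaneously} in $\om$ and $\theta$; as in \cite{DFGTV1}, this requires persistence of the spectral gap for the twisted cocycle rather than the merely $\theta$-pointwise rates provided directly by the MET.
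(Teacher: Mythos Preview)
Your approach is essentially the same as the paper's, and is correct. The paper's proof is extremely terse: it simply notes that the left-hand side is $O(r^n)$ uniformly in $\omega$ and $\theta$ (this is precisely the uniform exponential contraction on $H_\omega^\theta$ that you spell out), that it is analytic in $\theta$, and that it vanishes at $\theta=0$; the factor $|\theta|$ then follows from the Cauchy integral formula applied to the scalar function $\theta\mapsto \int \mcl_\om^{\theta,(n)}(v_\om^0-\phi_\om^\theta(v_\om^0)v_\om^\theta)\,dm$. You apply the Cauchy estimate one step earlier, to the $\B$-valued map $\theta\mapsto f_\om^\theta$, and then invoke the exponential contraction; the ingredients and the outcome are identical.
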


\begin{proof}
The lemma now follows since the left hand side is $\mathcal O(r^n)$ uniformly in $\om$ and $\te$ (around the origin), it is analytic in $\theta$ and it vanishes at $\theta=0$ (and therefore, by the Cauchy integral formula its derivative is of order $\mathcal O(r^n)$ as well).
\end{proof}

\begin{thm}\label{CLT}
Assume the transfer operator cocycle $\mc{R}$ is admissible, and the observable $g$ satisfies conditions~\eqref{obs} and~\eqref{zeromean}. 
Assume also that the asymptotic covariance matrix $\Sig^2$ is positive definite. 
Then, for every bounded and continuous function $\phi \colon \R^d \to \R$ and \paeom, we have
\[
\lim_{n\to\infty}\int \phi \bigg{(}\frac{S_n g(\om, x)}{ \sqrt n}\bigg{)}\, d\mu_\om (x)=\int \phi \, d\mathcal N(0, \Sig^2).
\]
\end{thm}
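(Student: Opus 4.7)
The plan is to apply the Lévy continuity theorem and show that the characteristic functions of $S_n g(\omega,\cdot)/\sqrt{n}$ under $\mu_\omega$ converge pointwise to those of $\mathcal{N}(0,\Sigma^2)$, for $\mathbb{P}$-a.e. $\omega$. Since the limit is continuous, this is equivalent to weak convergence, and hence to the stated integral convergence for bounded continuous $\phi$.

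First, fix $t\in\mathbb{R}^d$ and set $\theta_n=it/\sqrt{n}$. By \eqref{intprop},
\[
\int e^{it\cdot S_n g(\omega,\cdot)/\sqrt{n}}\,d\mu_\omega
=\int \mathcal L_\omega^{\theta_n,(n)}(v_\omega^0)\,dm.
\]
Decomposing $v_\omega^0=\phi_\omega^{\theta_n}(v_\omega^0)\,v_\omega^{\theta_n}+\bigl(v_\omega^0-\phi_\omega^{\theta_n}(v_\omega^0)v_\omega^{\theta_n}\bigr)$ and using the equivariance relation $\mathcal L_\omega^{\theta_n,(n)}v_\omega^{\theta_n}=\bigl(\prod_{j=0}^{n-1}\lambda_{\sigma^j\omega}^{\theta_n}\bigr)v_{\sigma^n\omega}^{\theta_n}$ together with $\int v_{\sigma^n\omega}^{\theta_n}dm=1$, I obtain (just as in \eqref{Basic relation})
\[
\int e^{it\cdot S_ng(\omega,\cdot)/\sqrt{n}}\,d\mu_\omega
=\phi_\omega^{\theta_n}(v_\omega^0)\,e^{\Pi_{\omega,n}(\theta_n)}
+\int \mathcal L_\omega^{\theta_n,(n)}\bigl(v_\omega^0-\phi_\omega^{\theta_n}(v_\omega^0)v_\omega^{\theta_n}\bigr)dm.
\]
Lemma~\ref{lem:UnifExpDecayY2} bounds the error term by $C r^n\,|\theta_n|=Cr^n|t|/\sqrt{n}\to 0$. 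Moreover, since $\theta\mapsto\phi_\omega^\theta(v_\omega^0)$ is analytic at $0$ with $\phi_\omega^0(v_\omega^0)=\int v_\omega^0\,dm=1$, we have $\phi_\omega^{\theta_n}(v_\omega^0)\to 1$.

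The key step is the Taylor expansion of $\Pi_{\omega,n}$ at $0$. Using $\Pi_{\omega,n}(0)=0$, equation~\eqref{D1 Pi} giving $\nabla\Pi_{\omega,n}(0)=0$, and the Cauchy integral formula on the uniformly bounded analytic functions $\Pi_{\sigma^j\omega}$, the third-order remainder is controlled by $Kn|\theta_n|^3=K|t|^3/\sqrt{n}$. Combining~\eqref{D2 Pi} with the definition \eqref{VAR} of $\Sigma^2$, one has $D^2\Pi_{\omega,n}(0)=\mathrm{Cov}_{\mu_\omega}(S_ng(\omega,\cdot))+O(1)=n\Sigma^2+o(n)$ for $\mathbb P$-a.e.\ $\omega$. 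Therefore
\[
\Pi_{\omega,n}(\theta_n)=\tfrac12\theta_n^T\bigl(n\Sigma^2+o(n)\bigr)\theta_n+O(|t|^3/\sqrt{n})
=-\tfrac12 t^T\Sigma^2 t+o(1).
\]
Putting the three pieces together,
\[
\lim_{n\to\infty}\int e^{it\cdot S_ng(\omega,\cdot)/\sqrt{n}}\,d\mu_\omega=e^{-\tfrac12 t^T\Sigma^2 t}
\]
for every $t\in\mathbb{R}^d$ and $\mathbb P$-a.e.\ $\omega$. Lévy's continuity theorem yields weak convergence of $S_ng(\omega,\cdot)/\sqrt{n}$ to $\mathcal N(0,\Sigma^2)$ under $\mu_\omega$, which gives the stated conclusion.

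The main obstacle is the asymptotic quadratic behavior of $\Pi_{\omega,n}$: one must have uniform analyticity of $\theta\mapsto\lambda_\omega^\theta$ on a deterministic neighborhood of $0$ (so that Cauchy estimates on derivatives are $\omega$-independent), together with the almost-sure convergence $\tfrac1n\mathrm{Cov}_{\mu_\omega}(S_ng(\omega,\cdot))\to\Sigma^2$ from \eqref{VAR}. Both ingredients are available from the preceding sections and from the argument already used in the proof of Theorem~\ref{MDthm}, so the CLT essentially drops out of the same expansion used for moderate deviations, specialized to the scale $\sqrt{n}$.
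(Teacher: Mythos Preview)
Your proof is correct and follows essentially the same route as the paper: reduce to characteristic functions via L\'evy's theorem, use the decomposition~\eqref{Basic relation}, kill the remainder with Lemma~\ref{lem:UnifExpDecayY2}, and Taylor-expand the logarithm of the product of the $\lambda_{\sigma^j\omega}^{\theta_n}$ to second order. The only cosmetic difference is how the quadratic term is identified with $\Sigma^2$: the paper computes $D^2\tilde H(0)(\om)$ explicitly via the Appendix formulas and then invokes Birkhoff's ergodic theorem on the resulting ergodic average, whereas you short-circuit this by appealing to~\eqref{D2 Pi} and the a.s.\ covariance limit~\eqref{VAR} (both already established in the moderate deviations argument). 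Your packaging is arguably cleaner, since it reuses work already done in Theorem~\ref{MDthm}; the paper's version is more self-contained for readers who skip that theorem.
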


\begin{proof}
It follows from Levy's continuity theorem that it is sufficient to prove that, for every $t\in \R^d$,
\[
 \lim_{n\to \infty}\int e^{it^T\frac{S_n g(\om, \cdot)}{\sqrt n}t} \, d\mu_\om=e^{-\frac{1}{2}t^T \Sig^2t} \quad \text{for \paeom,}
\]
where $t^T$ denotes the transpose of $t$.  Substituting $\te=t/\sqrt n$ in  (\ref{Basic relation}) and taking into account that $\lim_{\te\to 0}\phi_\om^\te(v_\om^0)=\phi_\om^0(v_\om^0)=1$, we conclude that it is sufficient to prove that 
\begin{equation}\label{uy}
\lim_{n \to \infty} \sum_{j=0}^{n-1} \log  \lambda_{\sigma^j \om}^{\frac{it}{\sqrt n}} = -\frac{1}{2} t^T \Sig^2 t, \quad \text{for \paeom.}
\end{equation}
We recall that $\lambda_\om^\theta=H(\theta, O(\theta))(\sigma \om)$, where $H$ is again given by~\eqref{GH}. We define $\tilde H$ on  a neighborhood of $0\in \C^d$ with values in $L^\infty(\Omega)$ by 
\[
\tilde H(\theta) (\om)=\log H(\theta, O(\theta)) (\om), \quad \om \in \Om.
\]
Observe that $\tilde H(0)(\om)=0$ for $\mathbb P$-a.e. $\om \in \Om$, and
that in the notations of the proof of Theorem \ref{MDthm} we have $\tilde H(\theta) (\om)=\Pi_{\sig^{-1}\om}(\te)$.
Therefore, as at the beginning of the proof of Theorem \ref{MDthm} we find that $\tilde H$ is analytic on  a neighborhood of $0$. Furthermore, by proceeding as in the proof of~\cite[Lemma 4.5.]{DFGTV1} we find that   
\[
D_i \tilde H(\theta)(\om)=\frac{1}{H(\theta, O(\theta)) (\om)}[D_i H(\theta, O(\theta)) (\om)+ (D_{d+1}H(\theta, O(\theta))D_i O(\theta))(\om)].  
\]
In particular, using Lemmas~\ref{l1} and~\ref{l2} we obtain that 
\[
D_i \tilde H(0)(\om)=\int g^i (\sigma^{-1}\om, \cdot )v_{\sigma^{-1}\om}^0 \, dm+\int (D_i O (0))_{\sigma^{-1} \om}\, dm.
\]
Thus, it follows from~\eqref{zeromean} and~\eqref{zerod} that $D_i \tilde H(0)(\om)=0$ for $i\in \{1, \ldots, d\}$ and for $\mathbb P$-a.e. $\om \in \Om$. 

Moreover, by  taking into account that $D_{d+1,d+1}H$ vanishes,  we have that  
\[
\begin{split}
& D_{ji}\tilde H(\theta)(\om)  \\
&=\frac{-E_i (\om) E_j(\om)}{[H(\theta, O(\theta)) (\om)]^2}  \\
&\phantom{=}+\frac{1}{H(\theta, O(\theta)) (\om)}[D_{ji}H(\theta, O(\theta)) (\om)+(D_{d+1, i}H(\theta, O(\theta)) D_j O(\theta))(\om)]\\
&\phantom{=}+\frac{1}{H(\theta, O(\theta)) (\om)} [(D_{j,d+1}H(\theta, O(\theta))D_i O(\theta))(\om)+(D_{d+1}H(\theta, O(\theta))D_{ji} O(\theta))(\om)],
\end{split}
\]
where 
\[
E_i(\om)=D_i H(\theta, O(\theta)) (\om)+ (D_{d+1}H(\theta, O(\theta))D_i O(\theta))(\om).
\]
By applying Lemma~\ref{laux} and using (\ref{zerodd}), we find that 
\begin{eqnarray*}
 D_{ji}\tilde H(0)(\om)=\int \Big(g^i(\sigma^{-1}\om, \cdot)g^j(\sigma^{-1}\om, \cdot)v_{\sigma^{-1}\om}^0+g^i (\sigma^{-1}\om, \cdot) (D_j O(0))_{\sigma^{-1} \om}+\\
g^j(\sigma^{-1}\om, \cdot)(D_i O(0))_{\sigma^{-1}\om}\Big)\, dm.
\end{eqnarray*}
Developing $\tilde{H}$  in a Taylor series around $0$, we have that
\[
\tilde{H}(\theta)(\om)= \log H(\theta, O(\theta))(\om) =\frac{1}{2} \theta^T  D^2 \tilde{H}(0)(\om) \theta+ R(\theta)(\om),
\]
where $R$ denotes the remainder. Therefore,
\[
 \log H \bigg{(}\frac{it}{\sqrt n}, O(\frac{it}{\sqrt n})\bigg{)}(\sigma^{ j+1} \om)=-\frac{1}{2n} t^T D^2 \tilde{H}(0)(\sigma^{j+1} \om)t+R(it/\sqrt n)(\sigma^{j+1} \om),
\]
which implies that
\begin{equation}\label{ii}
 \sum_{j=0}^{n-1} \log H \bigg{(}\frac{it}{\sqrt n}, O(\frac{it}{\sqrt n})\bigg{)}(\sigma^{ j+1} \om)=-\frac{1 }{2}\cdot \frac 1 n\sum_{j=0}^{n-1}t^T D^2 \tilde{H}(0)(\sigma^{j+1} \om) t+\sum_{j=0}^{n-1}R(it/\sqrt n)(\sigma^{j+1} \om).
 \end{equation}
By Birkhoff's ergodic theorem, we have that
\[
\begin{split}
\lim_{n\to \infty}\frac  1n \sum_{j=0}^{n-1}t^T D^2 \tilde{H}(0)(\sigma^{j+1} \om) t &=  \lim_{n\to \infty}\frac  1n \sum_{j=0}^{n-1}\sum_{k,l=1}^d t_k D_{kl}\tilde H(0) (\sigma^{j+1}\om ) t_l \\
&=\sum_{k,l=1}^d t_k D^2 \Lam (0)t_l \\
&=t^T \Sig^2 t,
\end{split}
\]
for $\mathbb P$-a.e. $\om \in \Om$, where we have used the penultimate equality in the Proof of Lemma \ref{lem:Lam''0}.
Furthermore, since $\tilde H(\te)$ are analytic in $\te$ and uniformly bounded in $\om$ we have that when $|t/\sqrt n|$ is sufficiently small then 
$|R(it/\sqrt n)(\om)|\leq C|t/\sqrt n|^3$, where $C>0$ is some constant which does not depend on $\om$ and $n$, and hence
\[
\lim_{n\to \infty}\sum_{j=0}^{n-1}R(it/\sqrt n)(\sigma^{j+1} \om) =0, \quad \text{for $\mathbb P$-a.e. $\om \in \Om$.}
\]
Thus, \eqref{ii} implies that
\[
\lim_{n\to \infty}\sum_{j=0}^{n-1} \log H \bigg{(}\frac{it}{\sqrt n}, O(\frac{it}{\sqrt n})\bigg{)}(\sigma^{ j+1} \om)=-\frac 1 2 t^T\Sig^2 t \quad \text{for $\mathbb P$-a.e. $\om \in \Om$,}
\]
and therefore~\eqref{uy} holds. This completes the proof of the theorem.
\end{proof}

\subsection{Berry-Esseen bounds}
In this subsection we restrict to the case when $d=1$, i.e. we consider real-valued observables. In this case $\Sigma^2$ is a nonnegative number and  in fact,
\[
\Sigma^2=\int_{\Omega \times X}g(\omega, x)^2\, d\mu(\omega, x)+2\sum_{n=1}^\infty \int_{\Omega \times X}g(\omega, x)g(\tau^n (\omega, x))\, d\mu(\omega, x).
\]
In this section we assume that $\Sig^2>0$ which means that $g$ is not an $L^2(\mu)$ coboundary with respect to the skew product $\tau$ (see~\cite[Proposition 3.]{DFGTV0}).
For $\omega \in \Omega$ and $n\in \N$, set
\[
\alpha_{\omega, n}:=\begin{cases}
\sum_{j=0}^{n-1}\tilde H''(0)(\sigma^{j+1}\omega) & \text{if $\sum_{j=0}^{n-1}\tilde H''(0)(\sigma^{j+1}\omega) \neq 0$;} \\
n\Sigma^2 & \text{if $\sum_{j=0}^{n-1}\tilde H''(0)(\sigma^{j+1}\omega)=0$,}
\end{cases}
\]
where $\tilde H$ is introduced in the previous subsection.  Then, 
\begin{equation}\label{8x}
\lim_{n\to \infty}\frac{\alpha_{\omega, n}}{n}=\Sigma^2, \quad \text{for $\mathbb P$-a.e. $\omega \in \Omega$.}
\end{equation}
Take now $\omega \in \Omega$ such that~\eqref{8x} holds. Set
\[
a_n:=\frac{\alpha_{\omega, n}}{n} \quad \text{and} \quad r_n=\sqrt{a_n},
\]
for $n\in \N$. Observe that $a_n$ and $r_n$ depend on $\om$ but in order to simplify the notation, we will not make this explicit. Taking $\te=tn^{-1/2}/r_n$ in (\ref{Basic relation}) we have that
\[
\begin{split}
\int_X e^{it\frac{S_ng(\omega, \cdot)}{r_n\sqrt n}}\, d\mu_\omega &=\phi_\omega^{\frac{it}{r_n\sqrt n}}(v_\omega^0)\prod_{j=0}^{n-1} \lambda_{\sigma^j \omega}^{\frac{it}{r_n \sqrt n}}\\
&\phantom{=}+\int_X\mathcal L_\omega^{\frac{it}{r_n \sqrt n}, (n)}(v_\omega^0-\phi_\omega^{\frac{it}{r_n\sqrt n}}(v_\omega^0)v_\omega^{\frac{it}{r_n \sqrt n}})\, dm.
\end{split}  
\]
Hence, 
\[
\begin{split}
\bigg{\lvert} \int_X e^{it\frac{S_ng(\omega, \cdot)}{r_n\sqrt n}}\, d\mu_\omega-e^{-\frac 1 2 t^2} \bigg{\rvert} &\le \bigg{\lvert}\phi_\omega^{\frac{it}{r_n\sqrt n}}(v_\omega^0)\prod_{j=0}^{n-1} \lambda_{\sigma^j \omega}^{\frac{it}{r_n \sqrt n}}-e^{-\frac 1 2 t^2} \bigg{\rvert} \\
&\phantom{\le}+\bigg{\lvert}\int_X\mathcal L_\omega^{\frac{it}{r_n \sqrt n}, (n)}(v_\omega^0-\phi_\omega^{\frac{it}{r_n\sqrt n}}(v_\omega^0)v_\omega^{\frac{it}{r_n \sqrt n}})\, dm \bigg{\rvert}.
\end{split}
\]
Observe that
\[
\begin{split}
\bigg{\lvert}\phi_\omega^{\frac{it}{r_n\sqrt n}}(v_\omega^0)\prod_{j=0}^{n-1} \lambda_{\sigma^j \omega}^{\frac{it}{r_n \sqrt n}}-e^{-\frac 1 2 t^2} \bigg{\rvert} &\le \lvert \phi_\omega^{\frac{it}{r_n\sqrt n}}(v_\omega^0)-1\rvert \cdot \bigg{\lvert}\prod_{j=0}^{n-1} \lambda_{\sigma^j \omega}^{\frac{it}{r_n \sqrt n}} \bigg{\rvert}\\
&\phantom{\le}+\bigg{\lvert}\prod_{j=0}^{n-1} \lambda_{\sigma^j \omega}^{\frac{it}{r_n \sqrt n}}-e^{-\frac 1 2 t^2}\bigg{\rvert} \\
&=:I_1+I_2.
\end{split}
\]
By~\cite[Lemma 4.6.]{DFGTV1}, we have that 
\[
\bigg{\lvert}\prod_{j=0}^{n-1} \lambda_{\sigma^j \omega}^{\frac{it}{r_n \sqrt n}} \bigg{\rvert} \le e^{-\frac{\Sigma^2}{8}r_n^{-2}t^2},
\]
for each $n\in \N$ sufficiently large  and $t$ such that $\lvert \frac{t}{r_n \sqrt n}\rvert$ is sufficiently small. Moreover, using the analyticity of the map $\theta \mapsto \phi^\theta$ (which as we already commented  can be obtained by repeating the arguments in~\cite[Appendix C]{DFGTV1}) and the fact\footnote{This is obtained by differentiating the identity $1=\phi_\om^\theta (v_\om^\theta)$ with respect to $\theta$ and evaluating at $\theta=0$.} that $\frac{d}{d\theta}\phi_\om^\theta \rvert_{\theta=0} (v_\om^0)=0$,  there exists $A>0$ (independent on $\om$ and $n$) such that 
\begin{equation}\label{Second order}
\lvert \phi_\omega^{\frac{it}{r_n\sqrt n}}(v_\omega^0)-1\rvert=\lvert \phi_\omega^{\frac{it}{r_n\sqrt n}}(v_\omega^0)-\phi_\omega^0(v_\omega^0)\rvert \le At^2r_n^{-2}n^{-1},
\end{equation}
whenever $\lvert \frac{t}{r_n \sqrt n}\rvert$ is sufficiently small. Consequently,  for $n$ sufficiently large and if $\lvert \frac{t}{r_n \sqrt n}\rvert$ is sufficiently small, 
\[
I_1 \le At^2r_n^{-2}n^{-1}e^{-\frac{\Sigma^2}{8}r_n^{-2}t^2}.
\]
On the other hand, we have that 
\[
\begin{split}
I_2 &=\bigg{\lvert} \prod_{j=0}^{n-1} \lambda_{\sigma^j \omega}^{\frac{it}{r_n \sqrt n}}-e^{-\frac 1 2 t^2}\bigg{\rvert} \\
&=\bigg{\lvert} e^{\sum_{j=0}^{n-1}\log \lambda_{\sigma^j \omega}^{\frac{it}{r_n \sqrt n}}}-e^{-\frac 1 2 t^2}\bigg{\rvert} \\
&=e^{-\frac 1 2 t^2}\bigg{\lvert} \exp \bigg{(}\sum_{j=0}^{n-1}\log \lambda_{\sigma^j \omega}^{\frac{it}{r_n \sqrt n}}+\frac 1 2 t^2 \bigg{)}-1\bigg{\rvert}.
\end{split}
\]
Observe that for $n$ sufficiently large, 
\[
\begin{split}
\sum_{j=0}^{n-1}\log \lambda_{\sigma^j \omega}^{\frac{it}{r_n \sqrt n}} &=\sum_{j=0}^{n-1}\tilde{H}(\frac{it}{r_n \sqrt n})(\sigma^{j+1}\omega)\\
&=\sum_{j=0}^{n-1} \bigg{(}\frac{-t^2 \tilde{H}''(0)(\sigma^{j+1} \omega)}{2nr_n^2}+R(\frac{it}{r_n\sqrt n})(\sigma^{j+1} \omega) \bigg{)}\\
&=-\frac{t^2}{2}+\sum_{j=0}^{n-1}R(\frac{it}{r_n\sqrt n})(\sigma^{j+1} \omega),
\end{split}
\]
and therefore
\[
\sum_{j=0}^{n-1}\log \lambda_{\sigma^j \omega}^{\frac{it}{r_n \sqrt n}}+\frac{t^2}{2}=\sum_{j=0}^{n-1}R(\frac{it}{r_n\sqrt n})(\sigma^{j+1} \omega).
\]
Using that $R(\frac{it}{r_n\sqrt n})=\frac{\tilde{H}'''(p_t)}{3!}(\frac{it}{r_n\sqrt n})^3$, for some $p_t$ between $0$ and $\frac{it}{r_n\sqrt n}$, we conclude that there exists $M>0$ such that
\[
\bigg{\lvert}\sum_{j=0}^{n-1}\log \lambda_{\sigma^j \omega}^{\frac{it}{r_n \sqrt n}}+\frac{t^2}{2} \bigg{\rvert} \le nM\lvert \frac{it}{r_n \sqrt n}\rvert^3 =\frac{M\lvert t\rvert^3}{r_n^3 \sqrt n}.
\]
Since $\lvert e^z-1\rvert \le 2\lvert z\rvert$ whenever $\lvert z\rvert$ is sufficiently small, we conclude that
\[
I_2\le 2Me^{-\frac{t^2}{2}}\lvert t\rvert^3r_n^{-3}n^{-1/2}.
\]
Observe that Lemma~\ref{lem:UnifExpDecayY2} implies that 
\[
\bigg{\lvert}\int_X\mathcal L_\omega^{\frac{it}{r_n \sqrt n}, (n)}(v_\omega^0-\phi_\omega^{\frac{it}{r_n\sqrt n}}(v_\omega^0)v_\omega^{\frac{it}{r_n \sqrt n}})\, dm \bigg{\rvert} \le C\frac{r^n \lvert t\rvert}{r_n \sqrt n},
\]
for some $C>0$ and whenever $\lvert \frac{t}{r_n \sqrt n}\rvert$ is sufficiently small.

Let $F_n \colon \R \to \R$ be a distribution function of $\frac{S_ng(\omega, \cdot)}{r_n \sqrt n}=\frac{S_n g(\om, \cdot)}{\sqrt {\alpha_{\om, n}}}$. Furthermore, let $F\colon \R \to \R$ be a distribution function of $\mathcal N(0, 1)$. Then, it follows from Berry-Esseen inequality that 
\begin{equation}\label{Esseen}
\sup_{x\in \R}\lvert F_n(x)-F(x)\rvert \le \frac{2}{\pi}\int_0^T\bigg{\lvert}\frac{\mu_\omega(e^{\frac{itS_ng(\omega, \cdot)}{r_n \sqrt n}})-e^{-\frac 1 2 t^2}}{t}\bigg{\rvert}\, dt +\frac{24}{\pi T}\sup_{x\in \R}\lvert F'(x)\rvert,
\end{equation}
for any $T>0$. It follows from the estimates we established that there exists $\rho>0$ such that
\[
\begin{split}
\int_0^{\rho r_n \sqrt n}\bigg{\lvert}\frac{\mu_\omega(e^{\frac{itS_ng(\omega, \cdot)}{r_n \sqrt n}})-e^{-\frac 1 2 t^2}}{t}\bigg{\rvert}\, dt  &\le Ar_n^{-2}n^{-1}\int_0^\infty te^{-\frac{\Sigma^2}{8}r_n^{-2}t^2}\, dt \\
&\phantom{\le}+2Mr_n^{-3}n^{-\frac 1 2}\int_0^\infty t^2 e^{-\frac{t^2}{2}}\, dt \\
&\phantom{\le}+C\rho r^n,
\end{split}
\]
for sufficiently large $n$. Since
\[
\sup_n \int_0^\infty te^{-\frac{\Sigma^2}{8}r_n^{-2}t^2}\, dt <\infty \quad \text{and} \quad \int_0^\infty t^2 e^{-\frac{t^2}{2}}\, dt <\infty, 
\]
we conclude that
\begin{equation}\label{BE1}
\sup_{x\in \R}\lvert F_n(x)-F(x)\rvert \le R(\omega)n^{-\frac 1 2}, 
\end{equation}
for some random variable $R$. 

Next, notice that in the notations of the proof of Theorem \ref{MDthm} we have 
\[
\al_{\om,n}=\Pi_{\om,n}''(0).
\]
Set $\sig_{\om,n}^2=\var_{\mu_\om}\big(S_ng(\om,\cdot)\big)$. Then by (\ref{D2 Pi}) we have
\[
|\sig_{\om,n}^2-\al_{\om,n}|\leq C,
\]
where $C$ is some constant which does not depend on $n$. Since $\al^{-\frac12}-\sig^{-\frac12}=\frac{\sig-\al}{\sqrt{\al\sig}(\sqrt \alpha+\sqrt \sigma)}$ for any nonzero $\al$ and $\sig$,  taking into account (\ref{obs}) we have 
\[
\left|S_ng(\om,\cdot)/\sqrt{\al_{\om,n}}-S_ng(\om,\cdot)/\sig_{\om,n}\right|\leq C_1n^{-\frac12}
\]
for some constant $C_1$ which does not depend on $n$. By applying~\cite[Lemma 3.3]{HK-BE} with $a=\infty$, we conclude from (\ref{BE1}) that the following self-normalized version of the Berry-Esseen theorem holds true:
\begin{equation}\label{BE2}
\sup_{x\in \R}\lvert \bar{F}_n(x)-F(x)\rvert \le R_1(\omega)n^{-\frac 1 2}
\end{equation}
for some random variable $R_1$, where $\bar{F}_n$ is a distribution function of $\frac{S_n g(\om, \cdot)}{\sigma_{\om, n}}$. 
\begin{rmk}
We stress that analogous result (using different techniques) for random expanding dynamics was obtained in~\cite[Theorem 7.1.1.]{HK}.
In Theorem~\ref{EdgeThm} we will give a somewhat different proof of~\eqref{BE2}, as well as prove  certain Edgeworth expansions of order one.
\end{rmk}

\subsection{Local limit theorem}

\begin{thm}\label{LLTthm}
Suppose that $\Sigma^2$ is positive definite and that for any compact set $J\subset\mathbb R^d\setminus\{0\}$ there exist $\rho \in (0,1)$ and a random variable $C\colon \Om \to (0, \infty)$ such that 
\begin{equation}\label{Large t's}
\lVert \mcl_\om^{it, (n)}\rVert \le C(\om) \rho^n, \quad \text{for $\mathbb P$-a.e. $\om \in \Om$, $t\in J$ and $n\in \N$.}
\end{equation}
Then, for $\mathbb P$-a.e. $\om \in \Om$ we have that 
\[
\lim_{n\to \infty}\sup_{s\in \R^d} \bigg{\lvert} \lvert \Sig \rvert n^{d/2}\mu_\om (s+S_n g(\om, \cdot)\in J)-\frac{1}{(2\pi)^{d/2}}e^{-\frac{1}{2n}s^T\Sigma^{-2}s}\lvert J\rvert \bigg{\rvert}=0, 
\]
where $\lvert \Sig \rvert=\sqrt{\det \Sig^2}$, $\Sig^{-2}$ is the inverse of $\Sig^2$ and $\lvert J\rvert$ denotes the volume of $J$. 
\end{thm}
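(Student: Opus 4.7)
The plan is to adapt the classical Stone-type Fourier-inversion proof of the LLT to the random setting, combining the spectral decomposition~\eqref{Basic relation}, the uniform error estimate in Lemma~\ref{lem:UnifExpDecayY2}, and the large-frequency hypothesis~\eqref{Large t's}. Throughout, fix a full-$\bbp$-measure set of $\om\in\Om$ on which all relevant almost-sure conclusions of the preceding sections, together with Birkhoff's theorem applied to $\om\mapsto D^2\tld H(0)(\om)$, hold simultaneously.

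First I would reduce to smooth test functions. Given $\eta>0$, choose non-negative Schwartz functions $\phi^\pm$ whose Fourier transforms are supported in a ball $\{|t|\le L\}$ and which satisfy $\phi^-\le\mathbf{1}_J\le\phi^+$ with $\int(\phi^+-\phi^-)\,d\leb<\eta$ (standard Beurling--Selberg-type approximation). The sandwich
\[
\int\phi^-(s+S_n g)\,d\mu_\om\le \mu_\om(s+S_n g\in J)\le\int\phi^+(s+S_n g)\,d\mu_\om
\]
reduces the theorem to proving, for each such test function $\phi$, that
\[
\lim_{n\to\infty}\sup_{s\in\R^d}\left||\Sig|n^{d/2}\!\int\phi(s+S_n g(\om,\cdot))\,d\mu_\om - \frac{1}{(2\pi)^{d/2}}e^{-\frac{1}{2n}s^T\Sig^{-2}s}\!\int\phi\,d\leb\right|=0,
\]
and then sending $\eta\to 0$.

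For such a $\phi$, Fourier inversion combined with~\eqref{intprop} and the substitution $t=u/\sqrt n$ yields
\[
n^{d/2}\!\int\phi(s+S_n g)\,d\mu_\om = \frac{1}{(2\pi)^d}\!\int_{|u|\le L\sqrt n}\!\hat\phi(u/\sqrt n)\,e^{-iu\cdot s/\sqrt n}\!\int_X\mcl_\om^{-iu/\sqrt n,(n)}(v_\om^0)\,dm\,du.
\]
I split the integration domain as $\{|u|\le\del\sqrt n\}\cup\{\del\sqrt n<|u|\le L\sqrt n\}$ for a small $\del>0$. On the outer annulus, hypothesis \eqref{Large t's} applied to $K=\{\del\le|t|\le L\}\subset\R^d\setminus\{0\}$ gives $|\int\mcl_\om^{-iu/\sqrt n,(n)}(v_\om^0)\,dm|\le C(\om)\rho^n\|v_\om^0\|_{\B}$, so the annular contribution is $O(\rho^n n^{d/2})\to 0$ uniformly in $s$. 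On the inner ball, \eqref{Basic relation} and Lemma~\ref{lem:UnifExpDecayY2} give
\[
\int\mcl_\om^{-iu/\sqrt n,(n)}(v_\om^0)\,dm = \phi_\om^{-iu/\sqrt n}(v_\om^0)\prod_{j=0}^{n-1}\lam_{\sig^j\om}^{-iu/\sqrt n} + O\!\left(r^n|u|/\sqrt n\right),
\]
whose error integrates to $O(r^n n^{d/2})\to 0$. Using the analytic branch $\tld H(\theta)(\om)=\log\lam_{\sig^{-1}\om}^\theta$ from the proof of Theorem~\ref{CLT}, together with $\tld H(0)=D\tld H(0)=0$ and the uniform (in $\om$) analyticity of $\tld H$, Taylor expansion produces
\[
\sum_{j=0}^{n-1}\log\lam_{\sig^j\om}^{-iu/\sqrt n}=-\frac{1}{2n}\sum_{j=0}^{n-1}u^T D^2\tld H(0)(\sig^{j+1}\om)\,u+R_{\om,n}(u),
\]
with $|R_{\om,n}(u)|\le C|u|^3/\sqrt n$ for $|u|\le\del\sqrt n$, while Birkhoff's theorem yields $\tfrac1n\sum_{j=0}^{n-1}D^2\tld H(0)(\sig^{j+1}\om)\to\Sig^2$. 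Combined with $\phi_\om^{-iu/\sqrt n}(v_\om^0)\to 1$ and $\hat\phi(u/\sqrt n)\to\hat\phi(0)=\int\phi\,d\leb$, dominated convergence (with the cubic remainder absorbed into the Gaussian envelope by taking $\del$ small) gives that the inner-ball contribution tends to
\[
\frac{\hat\phi(0)}{(2\pi)^d}\int_{\R^d}e^{-iu\cdot s/\sqrt n}e^{-\frac12 u^T\Sig^2 u}\,du = \frac{\int\phi\,d\leb}{(2\pi)^{d/2}|\Sig|}e^{-\frac{1}{2n}s^T\Sig^{-2}s},
\]
via the standard Gaussian Fourier transform.

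The main obstacle is arranging that every error estimate be uniform in $s\in\R^d$ and survive as the inner ball $\{|u|\le\del\sqrt n\}$ expands with $n$. Uniformity in $s$ is actually automatic, since $s$ enters only through the unimodular phase $e^{-iu\cdot s/\sqrt n}$ and its dependence in the leading term is recovered exactly by the Gaussian integral. The genuinely technical point is to extend the Taylor-remainder bound, the $\phi_\om^\theta$-continuity estimate, and Lemma~\ref{lem:UnifExpDecayY2} uniformly in $\om$ and over the growing domain $|u|\le\del\sqrt n$; this is achieved by the uniform analyticity and boundedness of $\te\mapsto(\lam_\om^\te,v_\om^\te,\phi_\om^\te)$ established in Section~\ref{lar}, together with the obvious Gaussian tail estimate on $|u|\ge\del\sqrt n$ in the target expression.
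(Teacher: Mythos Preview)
Your proposal is correct and follows essentially the same route as the paper: reduce to test functions with compactly supported Fourier transform (the paper cites Morita's density argument where you invoke Beurling--Selberg), express everything via Fourier inversion and twisted transfer operators, split the frequency range into an inner ball and an outer annulus, and treat the annulus by~\eqref{Large t's} and the inner ball via the spectral decomposition~\eqref{Basic relation}, Lemma~\ref{lem:UnifExpDecayY2}, Taylor expansion of $\sum_j\log\lam_{\sigma^j\om}^{it/\sqrt n}$, Birkhoff's theorem, and dominated convergence with a Gaussian majorant. The paper makes the five-fold splitting $(I)$--$(V)$ explicit where you bundle $(I)$, $(II)$, $(III)$, $(V)$ into the inner-ball analysis and $(IV)$ into the annulus, but the ingredients and estimates are the same.
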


\begin{proof}
The proof is analogous to the proof of~\cite[Theorem C.]{DFGTV1}.
 Using the density argument (analogous to that in~\cite{Morita}), it is sufficient to show that
 \begin{equation}
 \label{66a}
 \sup_{s\in \R^d} \bigg{\lvert} \lvert \Sig \rvert  n^{d/2}\int h(s+S_ng(\om, \cdot))\, d\mu_\om-\frac{1}{(2\pi)^{d/2}}e^{-\frac{1}{2n}s^T\Sigma^{-2}s}\int_{\R^d} h(u)\, du\bigg{\rvert} \to 0,
 \end{equation}
when $n\to \infty$ for every $h\in L^1(\R^d)$ whose Fourier transform $\hat{h}$ has compact support. By using the inversion formula
\[
 h(x)=\frac{1}{(2\pi)^d}\int_{\R^d} \hat h(t)e^{it\cdot x}\, dt,
\]
and Fubini's theorem we have that 
\[
 \begin{split}
\lvert  \Sig  \rvert n^{d/2}\int h(s+S_ng(\om,\cdot))\, d\mu_\om &=\frac{\lvert \Sig \rvert  n^{d/2} }{(2\pi)^d} \int \int_{\R^d}\hat h(t)e^{it \cdot (s+S_n g(\om, \cdot))}\, dt \, d\mu_\om \\
  &=\frac{\lvert \Sig \rvert  n^{d/2} }{(2\pi)^d} \int_{\R^d} e^{it\cdot s}\hat h(t)\int e^{it \cdot S_n g(\om, \cdot)}\, d\mu_\om \, dt \\
  &=\frac{\lvert \Sig \rvert  n^{d/2} }{(2\pi)^d} \int_{\R^d} e^{it\cdot s}\hat h(t)\int e^{it \cdot S_n g(\om, \cdot)}v_\om^0\, dm \, dt \\
  &=\frac{\lvert \Sig \rvert  n^{d/2} }{(2\pi)^d} \int_{\R^d} e^{it\cdot s}\hat h(t)\int \mathcal L_{\om}^{it, (n)}v_\om^0\, dm \, dt \\
  &=\frac{\lvert \Sigma \rvert}{(2\pi)^d}\int_{\R^d} e^{\frac{it \cdot s}{\sqrt n}}\hat h(\frac{t}{\sqrt n})\int \mathcal L_{\om}^{\frac{it}{\sqrt n}, (n)}v_\om^0\, dm \, dt.
 \end{split}
\]
Recalling that the Fourier transform of  $f(x)=e^{-\frac{1}{2}x^T\Sig^2 x}$ is given by $\hat f(t)=\frac{(2\pi)^{d/2}}{\lvert \Sigma \rvert}e^{- \frac 1 2 t^T\Sig^{-2}t}$, we have that 
\begin{eqnarray*}
\frac{1}{(2\pi)^{d/2}}e^{-\frac{1}{2n}s^T\Sig^{-2}s}\int_{\R^d} h(u)\, du&=&\frac{\hat h(0)}{(2\pi)^{d/2}}e^{-\frac{1}{2n}s^T\Sig^{-2}s}\\
&=&\frac{\hat h(0) \lvert \Sigma \rvert}{(2\pi)^d} \hat f(-s/\sqrt n) \\
&=&\frac{\hat h(0)\lvert \Sigma \rvert}{(2\pi)^d} \int_{\R^d} e^{\frac{it\cdot s}{\sqrt n}} \cdot e^{-\frac{1}{2}t^T\Sig^2 t}\, dt.
 \end{eqnarray*}
Therefore, in order to complete the proof of the theorem we need to show that 
\[
\sup_{s\in \R^d} \bigg{\lvert}\frac{\lvert \Sigma \rvert}{(2\pi)^d}\int_{\R^d} e^{\frac{it \cdot s}{\sqrt n}}\hat h(\frac{t}{\sqrt n})\int \mathcal L_{\om}^{\frac{it}{\sqrt n}, (n)}v_\om^0\, dm \, dt-\frac{\hat h(0)\lvert \Sigma \rvert}{(2\pi)^d} \int_{\R^d} e^{\frac{it\cdot s}{\sqrt n}} \cdot e^{-\frac{1}{2}t^T\Sig^2 t}\, dt
\bigg{\rvert} \to 0,
\]
when $n\to \infty$ for $\mathbb P$-a.e. $\om \in \Om$. Choose $\delta >0$ such that the support of $\hat h$ is contained in $\{t\in \R^d: \lvert t\rvert \le \delta \}$. Then, for any $\tilde \delta \in (0, \delta)$, we have that 
\begin{align*}
 &\frac{\lvert \Sigma \rvert}{(2\pi)^d}\int_{\R} e^{\frac{it\cdot s}{\sqrt n}}\hat h(\frac{t}{\sqrt n})\int \mathcal L_{\om}^{\frac{it}{\sqrt n}, (n)}v_\om^0\, dm \, dt -
 \frac{\hat h(0)\lvert \Sigma \rvert}{(2\pi)^d} \int_{\R} e^{\frac{it\cdot s}{\sqrt n}} \cdot e^{-\frac{1}{2}t^T\Sig^2 t}\, dt \displaybreak[0] \\
 &=\frac{\lvert \Sigma \rvert}{(2\pi)^d} \int_{\lvert t\rvert < \tilde \delta \sqrt n} e^{\frac{it\cdot s}{\sqrt n}} \Big(\hat h(\frac{t}{\sqrt n})\prod_{j=0}^{n-1}\lambda_{\sigma^j \om}^{\frac{it}{\sqrt n}}-\hat h(0)e^{-\frac{1}{2}t^T\Sig^2 t} \Big)\, dt \displaybreak[0] \\
&\phantom{=}+\frac{\lvert \Sigma \rvert}{(2\pi)^d}\int_{\lvert t\rvert < \tilde \delta \sqrt n}e^{\frac{it\cdot s}{\sqrt n}}\hat h(\frac{t}{\sqrt n})
\int
\prod_{j=0}^{n-1}\lambda_{\sigma^j \om}^{\frac{it}{\sqrt n}} \Big( \phi_\om^{\frac{it}{\sqrt n}}( v_\om^0 ) v_{\sig^{n}\om}^{\frac{it}{\sqrt n}}-1 \Big)\,dm \, dt \displaybreak[0] \\
&\phantom{=}+\frac{\lvert\Sig \rvert n^{d/2}}{(2\pi)^d}\int_{\lvert t\rvert <\tilde \delta}e^{it\cdot s}\hat h(t)\int  \mathcal L_{\om}^{it, (n)} (v_\om^0 - \phi_\om^{it}( v_\om^0 ) v_{\om}^{it}) \, dm\, dt \displaybreak[0] \\
&\phantom{=}+\frac{\lvert \Sig \rvert n^{d/2}}{(2\pi)^d}\int_{\tilde \delta \le \lvert t\rvert < \delta}e^{it\cdot s}\hat h(t)\int \mathcal L_\om^{it, (n)}v_\om^0\, dm\, dt \displaybreak[0] \\
&\phantom{=}-\frac{\lvert \Sigma \rvert}{(2\pi)^d}\hat h(0) \int_{\lvert t\rvert \ge \tilde \delta \sqrt n}e^{\frac{it\cdot s}{\sqrt n}} \cdot e^{-\frac 1 2 t^T\Sig^2 t}\, dt=: (I)+(II)+(III)+(IV)+(V).
\end{align*}
One can now proceed as in the proof of~\cite[Theorem C.]{DFGTV1} and show that each of the terms $(I)$--$(V)$ converges to zero as $n\to \infty$. For the convenience of the reader, we give here  complete arguments for terms $(I)$ (which is most involved) and $(IV)$ (since this is the only part of the proof that requires~\eqref{Large t's}).

\paragraph{Control of (I).}
We claim that for \paeom,
\[
\lim_{n\to \infty}
\sup_{s\in \R^d} \bigg{\lvert}\int_{\lvert t\rvert < \tilde \delta \sqrt n}e^{\frac{it\cdot s}{\sqrt n}}
 \Big(\hat h(\frac{t}{\sqrt n})\prod_{j=0}^{n-1} \lambda_{\sigma^j \om}^{\frac{it}{\sqrt n}}-\hat h(0)e^{-\frac 1 2 t^T\Sig^2 t}\Big)\, dt \bigg{\rvert} = 0.
 \]
Observe that 
 \[
\begin{split}
&  \sup_{s\in \R^d} \bigg{\lvert}\int_{\lvert t\rvert < \tilde \delta \sqrt n}e^{\frac{it\cdot s}{\sqrt n}}
 \Big(\hat h(\frac{t}{\sqrt n})\prod_{j=0}^{n-1} \lambda_{\sigma^j \om}^{\frac{it}{\sqrt n}}-\hat h(0)e^{-\frac 1 2 t^T\Sig^2 t} \Big)\, dt \bigg{\rvert} \\
& \le \int_{\lvert t\rvert < \tilde \delta \sqrt n}\bigg{\lvert} \hat h(\frac{t}{\sqrt n})\prod_{j=0}^{n-1} \lambda_{\sigma^j \om}^{\frac{it}{\sqrt n}}-\hat h(0)e^{-\frac 1 2 t^T\Sig^2 t}
 \bigg{\lvert} \, dt.
\end{split}
 \]
It follows from the continuity of $\hat h$ and~\eqref{uy} that for \paeom \ and every $t$,
\begin{equation}
\label{intermediate}
 \hat h(\frac{t}{\sqrt n})\prod_{j=0}^{n-1} \lambda_{\sigma^j \om}^{\frac{it}{\sqrt n}}-\hat h(0)e^{-\frac 1 2 t^T\Sig^2 t}\to 0, \quad \text{when $n\to \infty$.}
\end{equation}
The desired conclusion will  follow from the dominated convergence theorem once we establish the following lemma.

\begin{lemma}\label{lem:boundProdLam}
 For $\tilde \delta >0$ sufficiently small, there exists $n_0\in \N$ such that for all $n\ge n_0$ and $t$ such that $\lvert t\rvert < \tilde \delta \sqrt n$,
 \[
  \bigg{\lvert} \prod_{j=0}^{n-1} \lambda_{\sigma^j \om}^{\frac{it}{\sqrt n}} \bigg{\rvert} \le e^{-\frac 1 8 t^T\Sig^2 t}.
 \]
\end{lemma}

\begin{proof}[Proof of the lemma]
We will use the same notation as in the proof of Theorem~\ref{CLT}.  We have that
 \[
  \bigg{\lvert} \prod_{j=0}^{n-1} \lambda_{\sigma^j \om}^{\frac{it}{\sqrt n}} \bigg{\rvert}=e^{-\frac{1}{2n} \Re ( \sum_{j=0}^{n-1} t^T D^2 \tilde{H}''(0)(\sigma^{j+1} \om) t)}\cdot
  e^{\Re (\sum_{j=0}^{n-1} R(it/\sqrt n)(\sigma^{j+1} \om))}.
 \]
In the proof of Theorem~\ref{CLT} we have showed that \[\frac{1}{n} \sum_{j=0}^{n-1}  D^2 \tilde{H}(0)(\sigma^{j+1} \om) \to \Sigma^2  \quad \text{for \paeom.} \]
Therefore, for $\mathbb P$-a.e. $\om \in \Om$ there exists $n_0=n_0(\om) \in \N$ such that
\[
e^{-\frac{1}{2n} \Re ( \sum_{j=0}^{n-1} t^T D^2 \tilde{H}''(0)(\sigma^{j+1} \om) t)} \le e^{-\frac 1 4 t^T\Sig^2 t}, \quad \text{for $n\ge n_0$ and $t\in \R^d$.}
\]
Finally, recall that $|R(it/\sqrt n)(\om)|\leq C|t/\sqrt n|^3$, where $C>0$ is some constant which does not depend on $\om$ and $n$ when $|t/\sqrt n|$ is small enough. Therefore, if $|t|\leq\sqrt n\tilde\del$ and $\tilde\del$ is small enough we have
 we have 
\[
 e^{\Re (\sum_{j=0}^{n-1} R(it/\sqrt n)(\sigma^{j+1} \om))} \le e^{C|t|^3n^{-\frac12}} \leq e^{-\frac 1 8 t^T\Sig^2 t}.
\]
Here we have used that $|t|^3n^{-1/2}\leq\tilde\del |t|^2$ and that $t^T\Sig^2 t\geq a|t|^2$ for some $a>0$ and all $t\in\bbR^d$. 
The conclusion of the lemma follows directly from the last two estimates. 
\end{proof}
\paragraph{Control of (IV).}
By~\eqref{Large t's},
\[
 \sup_{s\in \R^d}\frac{\lvert \Sig \rvert n^{d/2}}{(2\pi)^d} \bigg{\lvert}\int_{\tilde \delta \le \lvert t\rvert \le \delta} e^{it \cdot s}\hat h(t)\int \mathcal L_{\om}^{it, (n)}v_\om^0\, dm \, dt
 \bigg{\rvert} \le CV_{\delta, \tilde \delta}\frac{ \lvert \Sig \rvert n^{d/2}}{(2\pi)^d}\lVert \hat h\rVert_{L^\infty} \cdot \rho^n \cdot \lVert v^0\rVert_\infty  \to 0,
\]
when $n\to \infty$ by \eqref{eq:boundedv} and the fact that $\hat h$ is continuous. Here $V_{\delta, \delta'}$ denotes the volume of $\{t\in \R^d: \tilde \delta \le \lvert t\rvert \le \delta\}$.

\end{proof}

Let us now discuss  conditions under which~\eqref{Large t's} holds.

\begin{lemma}\label{Per0}
Assume that: 
\begin{enumerate}
\item $\mathcal F$ is a Borel $\sigma$-algebra on $\Omega$;
\item $\sigma$ has a periodic point $\om_0$ (whose period is denoted by $n_0$), and $\sigma$ is continuous at each point that belongs to the orbit of $\om_0$;
\item $\mathbb P(U)>0$ for any open set $U$ that intersects the orbit  of $\om_0$;
\item for any compact set $J\subset\bbR^d$,  the family of  maps $\om\to\cL_\om^{it},\,t\in J$ is uniformly continuous at the orbit points of $\om_0$;
\item for any  $t\not=0$ the spectral radius of $\cL_{\om_0}^{it,(n_0)}$ is smaller than $1$;
\item  for any compact set $J\subset\bbR^d$ there exists a constant $B(J)>0$ such that 
\begin{equation}\label{Bound J}
\sup_{t\in J}\sup_{n\geq 1}\|\cL_\om^{it,(n)}\|\leq B(J).
\end{equation}
\end{enumerate}
Then, for any compact $J\subset\bbR^d\setminus\{0\}$ there exists a random variable $C\colon \Omega \to (0, \infty)$ and  a constant $d=d(J)>0$ such that for $\mathbb P$-a.e. $\om \in \Om$ and  for any $n\geq 1$, we have that 
\[
\sup_{t\in J}\|\cL_\om^{it,(n)}\|\leq C(\om ) e^{-nd}.
\]
\end{lemma}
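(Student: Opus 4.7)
The plan is to transfer the strict contraction at the periodic point $\om_0$ (assumption 5) first to a uniform bound on $J$, then to a positive-$\bbp$-measure neighborhood, and finally to exploit ergodicity of $\sigma$ via a greedy selection of "good blocks".

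\textbf{Step 1 (Uniform contraction along the periodic orbit).} For $t\in J$, let $\rho(t)$ denote the spectral radius of $\cL_{\om_0}^{it,(n_0)}$. By (5), $\rho(t)<1$. Assumption (4) makes $t\mapsto \cL_{\om_0}^{it,(n_0)}$ operator-norm continuous, hence $\rho(\cdot)$ is upper semicontinuous; on the compact $J$ we obtain $\rho_*:=\sup_{t\in J}\rho(t)<1$. Fix $\rho_1\in(\rho_*,1)$. Gelfand's formula gives for each $t\in J$ some $k(t)\in\bbN$ with $\|\cL_{\om_0}^{it,(k(t)n_0)}\|\le \rho_1^{k(t)}$, and by operator-norm continuity in $t$ the same estimate holds on an open neighborhood $U_t\subset J$. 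A finite subcover $U_{t_1},\dots,U_{t_N}$ together with $K:=\operatorname{lcm}(k(t_1),\dots,k(t_N))$ yields, via the periodicity identity
\[
\cL_{\om_0}^{is,(Kn_0)} \;=\; \bigl(\cL_{\om_0}^{is,(k(t_j)n_0)}\bigr)^{K/k(t_j)}\qquad(s\in U_{t_j}),
\]
the uniform bound $\sup_{t\in J}\|\cL_{\om_0}^{it,(Kn_0)}\|\le \rho_1^K$. Enlarging $K$ (to a multiple of itself) I may additionally assume $2\rho_1^K B(J)<1$, where $B(J)$ is from (6).

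\textbf{Step 2 (Spreading to a neighborhood $V$ of $\om_0$).} Using the standard telescoping estimate
\[
\bigl\|\cL_\om^{it,(Kn_0)}-\cL_{\om_0}^{it,(Kn_0)}\bigr\|\;\le\; Kn_0\,B(J)^{Kn_0-1}\max_{0\le j<Kn_0}\bigl\|\cL_{\sigma^j\om}^{it}-\cL_{\sigma^j\om_0}^{it}\bigr\|,
\]
together with continuity of $\sigma$ at each orbit point (2) and the equicontinuity in $\om$ uniform in $t\in J$ (4), I find an open neighborhood $V$ of $\om_0$ such that
\[
\eta \;:=\; \sup_{\om\in V,\,t\in J}\bigl\|\cL_\om^{it,(Kn_0)}\bigr\| \;\le\; 2\rho_1^K,\qquad \eta B(J)<1.
\]
Assumption (3) guarantees $\bbp(V)>0$.

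\textbf{Step 3 (Ergodic block decomposition).} Set $M:=Kn_0$. By Birkhoff's theorem applied to the ergodic map $\sigma$ and the indicator $\one_V$, for $\bbp$-a.e.\ $\om$ there is a random $n_1(\om)$ such that
\[
T_n(\om)\;:=\;\#\{0\le j\le n-M:\sigma^j\om\in V\}\;\ge\;\tfrac{\bbp(V)}{3}\,n\qquad(n\ge n_1(\om)).
\]
Greedy selection of good times $j_1<j_2<\cdots<j_L$ subject to $j_{k+1}\ge j_k+M$ yields $L\ge T_n(\om)/M\ge \bbp(V)\,n/(3M)$ non-overlapping good blocks, and the decomposition
\[
\cL_\om^{it,(n)} \;=\; F_{L+1}\circ G_L\circ F_L\circ\cdots\circ G_1\circ F_1,
\]
in which each $G_k$ is a good block of length $M$ with $\|G_k\|\le\eta$, and each $F_k$ is a filler of the form $\cL_{\sigma^\cdot\om}^{it,(\cdot)}$ with $\|F_k\|\le B(J)$ by (6) (taking $B(J)\ge 1$ without loss of generality). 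Consequently
\[
\|\cL_\om^{it,(n)}\|\;\le\;B(J)\,\bigl(\eta B(J)\bigr)^L\;\le\;B(J)\,e^{-dn}, \qquad d:=\tfrac{\bbp(V)}{3M}\log\tfrac{1}{\eta B(J)}>0,
\]
for $n\ge n_1(\om)$. Setting $C(\om):=B(J)\,e^{d\,n_1(\om)}$ absorbs the initial range $n<n_1(\om)$ and yields the claim.

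\textbf{Main obstacle.} The delicate part is Step 1: upgrading the pointwise assertion $\rho(t)<1$ to a uniform estimate $\|\cL_{\om_0}^{it,(Kn_0)}\|\le\rho_1^K$ with a single iterate $K$ valid for all $t\in J$. Upper semicontinuity of the spectral radius alone does not give uniform convergence in Gelfand's formula; the key trick is to combine a finite open cover of $J$ with the $\operatorname{lcm}$ of the local iterates, exploiting the periodicity of $\om_0$ to factor the $Kn_0$-fold composition as an integer power of the localized one.
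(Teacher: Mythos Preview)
Your proof is correct and follows essentially the same three–step strategy as the paper (which defers to the proof of Lemma~\ref{Per} and to \cite[Lemma~2.10.4]{HK}): uniform contraction at the periodic orbit, transfer to a positive-measure neighborhood, and an ergodic block decomposition of the cocycle. Your Step~1 is in fact more carefully argued than the paper's version, which simply asserts the uniform bound; the only minor slip is that the operator-norm continuity of $t\mapsto\cL_{\om_0}^{it,(n_0)}$ follows from the analytic dependence of the twisted operators on $\theta$ (see Lemma~\ref{l49}) rather than from assumption~(4), which concerns continuity in $\om$.
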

The proof of Lemma~\ref{Per0} is identical to the proof of ~\cite[Lemma 2.10.4]{HK}.
We also refer the readers to the arguments in proof of Lemma \ref{Per}.
Condition (\ref{Bound J}) is satisfied for the distance expanding maps considered in~\cite[Chapter 5]{HK} (assuming they are non-singular). 
Indeed, the proof of the Lasota-Yorke inequality (see~\cite[Lemma 5.6.1]{HK}) proceeds similarly for vectors $z\in\bbC^d$ instead of complex numbers. Therefore, there exists a constant $C>0$ so that $P$-almost surely, for any $t\in\bbR^d$ and $n\geq 1$ we have
\[
\|\cL_\om^{it,(n)}\|\leq C(1+|t|)\sup|\cL_\om^{(n)}\textbf{1}|
\]
where $\textbf{1}$ is the function which takes the constant value $1$. Note that in the circumstances of \cite{HK},\, $\var(\cdot)=v_\al(\cdot)$ is the H\"older constant corresponding to some exponent $\al\in(0,1]$. In particular $\mathcal B$ contains only H\"older continuous functions and the norm $\|\cdot\|_{\mathcal B}$ is equivalent to the norm $\|g\|_\al=v_\al(g)+\sup|g|$. Therefore, by \ref{cond:dec} for $P$-almost any $\om$ we have 
\[
\sup|\cL_\om^{(n)}\textbf{1}|=\|\cL_\om^{(n)}\textbf{1}\|_{L^\infty}\leq C
\]
for some $C$ which does not depend on $\om$ and $n$, and hence (\ref{Bound J}) holds true.

\subsection{Edgeworth and LD expansions}
Let us restrict ourselves again to the scalar case $d=1$.
Our main result here is the following Edgeworth expansion of order $1$.
\begin{thm}\label{EdgeThm}
Suppose that $\Sigma^2>0$. 

(i) The following self normalized version of the Berry-Esseen theorem holds true:
\begin{equation}\label{SelfNor BE}
\sup_{t\in\bbR}\left|\mu_\omega(\{S_ng(\omega,\cdot)\leq t\sigma_n\})-\Phi(t)\right|\leq R_\om n^{-\frac12},
\end{equation}
for some random variable  $R_\om$, where $\Phi(t)$ is the standard normal distribution function and $\sigma_n^2=\sigma_{\omega,n}^2=\text{Var}_{\mu_\omega}(S_ng(\omega,\cdot))$.

(ii) Assume, in addition, that for any compact set $J\subset\mathbb R\setminus\{0\}$ we have
\begin{equation}\label{Large t's2}
\lim_{n\to\infty}n^{1/2}\left|\int_J\int_X e^{\frac{it}{\sqrt{n}}  S_n g(\omega,x)}d\mu_\omega(x)dt\right|=0,\,P\text{-a.s.}
\end{equation}
Let $A_{\omega,n}$ be a function whose derivative's Fourier transform is $e^{-\frac 12 t^2}(1+\cP_{\om,n}(t))$, where
\[
\cP_{\om,n}(t)=-\frac12\Pi_{\omega,n}''(0)\left(\frac{t}{\sigma_n}\right)^2+\frac12t^2-\frac{i}6\Pi_{\omega,n}'''(0)\left(\frac{t}{\sigma_n}\right)^3.
\]
Then,
\[
\lim_{n\to\infty}\sqrt n\sup_{t\in\mathbb R}\left|\mu_\omega(\{S_ng(\omega,\cdot)\leq t\sigma_n\})-A_{\omega,n}(t)\right|=0.
\]
\end{thm}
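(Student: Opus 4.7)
The plan is to compare the characteristic function $\phi_n(t):=\int_X e^{it S_n g(\om,\cdot)/\sigma_n}\, d\mu_\om$ with the Fourier transform $\hat a_n(t) := e^{-t^2/2}(1+\cP_{\om,n}(t))$ of $A_{\om,n}'$, and then convert this comparison into a bound on the corresponding distribution functions via an Esseen-type smoothing inequality. Starting from identity (\ref{Basic relation}) with $\te = it/\sigma_n$ and using Lemma~\ref{lem:UnifExpDecayY2},
\[
\phi_n(t) = \phi_\om^{it/\sigma_n}(v_\om^0)\, e^{\Pi_{\om,n}(it/\sigma_n)} + \mathcal O\bigl(r^n |t|/\sigma_n\bigr),
\]
uniformly for $t$ with $|t|/\sigma_n$ in a fixed neighborhood of $0$. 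A third order Taylor expansion of $\Pi_{\om,n}$ around $0$, using $\Pi_{\om,n}'(0)=0$ from (\ref{D1 Pi}) and the uniform-in-$\om$ bound $|\tilde H(\te)(\om)|\leq c$ on a fixed disc (so that Cauchy's formula controls each fixed-order derivative of $\Pi_{\om,n}$ at $0$, of size $\mathcal O(n)$), combined with $|\phi_\om^{it/\sigma_n}(v_\om^0)-1|=\mathcal O(t^2/\sigma_n^2)$ from (\ref{Second order}) and $\al_{\om,n}/\sigma_n^2 = 1+\mathcal O(1/n)$, yields for $|t|\leq \ep\sqrt n$
\[
\phi_n(t) = \hat a_n(t) + e^{-t^2/8}\,\mathcal O\bigl(Q(|t|)/n\bigr)
\]
for some fixed polynomial $Q$: by construction $\cP_{\om,n}$ absorbs exactly the quadratic correction from $\al_{\om,n}/\sigma_n^2-1$ and the cubic term in the expansion of $\Pi_{\om,n}$.

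Next, apply an Esseen smoothing inequality in its form valid for signed approximations (noting $\sup_x|A_{\om,n}'(x)|$ is bounded uniformly in $n$ and $\om$, since $\hat a_n$ is Schwartz with uniform bounds): for every $T>0$,
\[
\sup_{x\in\R}|F_n(x)-A_{\om,n}(x)| \leq \frac{1}{\pi}\int_{-T}^T \frac{|\phi_n(t)-\hat a_n(t)|}{|t|}\, dt + \frac{C}{T},
\]
where $F_n$ is the distribution function of $S_n g(\om,\cdot)/\sigma_n$ under $\mu_\om$. For part (i), take $T=\ep\sqrt n$: the above expansion makes the integral $\mathcal O(n^{-1/2})$ (the leading $\mathcal O(t^3/\sqrt n\cdot e^{-t^2/4})$ term dominates after dividing by $|t|$), and the tail term is also $\mathcal O(n^{-1/2})$, which gives (\ref{SelfNor BE}).

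For part (ii), choose $T=T_n$ with $T_n/\sqrt n\to\infty$ so that $C/T_n = o(n^{-1/2})$. The contribution of $|t|\leq \ep\sqrt n$ to the Esseen integral is now $\mathcal O(1/n)=o(n^{-1/2})$, thanks to the cancellation built into $\cP_{\om,n}$. In the intermediate range $\ep\sqrt n<|t|\leq T_n$ the $\hat a_n$ part is super-exponentially small and negligible, whereas the $\phi_n$ part must be controlled using the hypothesis (\ref{Large t's2}); this band is the main obstacle. Since (\ref{Large t's2}) bounds the oscillatory integral of $\E_\om[e^{i(t/\sqrt n)S_n g}]$ over compact sets bounded away from $0$ rather than $|\phi_n(t)|/|t|$ pointwise, the strategy is to integrate by parts once in the Esseen integrand so that the antiderivative $\Psi_n(t):=\int_0^t \phi_n(s)\,ds$ replaces $\phi_n(t)/t$. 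After the change of variable $s=u\sigma_n/\sqrt n$ a dyadic decomposition of the resulting integration range into compact pieces reduces matters to (\ref{Large t's2}) applied on each piece, while the boundary contributions at $t=\pm T_n$ are absorbed in the $C/T_n$ tail and those at $t=\pm\ep\sqrt n$ by the small-$t$ expansion.
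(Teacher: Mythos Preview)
Your argument for part~(i) is essentially the paper's: the same characteristic-function expansion via \eqref{Basic relation}, Lemma~\ref{lem:UnifExpDecayY2}, \eqref{Second order}, and a third-order Taylor expansion of $\Pi_{\om,n}$, followed by the Esseen smoothing inequality with $T$ of order $\sqrt n$. That part is fine.

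For part~(ii), however, there is a genuine gap. The choice $T_n/\sqrt n\to\infty$ forces you to control $\int_{\epsilon\sqrt n}^{T_n}|\phi_n(t)|/|t|\,dt$ on an interval which, after rescaling $u=t\sqrt n/\sigma_n$, becomes \emph{unbounded}. Your proposed fix---integrate by parts once so that $\Psi_n(t)=\int_0^t\phi_n(s)\,ds$ replaces $\phi_n(t)/t$---does not work, because the Esseen integrand carries $|\phi_n(t)-\hat a_n(t)|$ inside the integral and you cannot integrate by parts through an absolute value. Even setting that aside, the dyadic decomposition does not close: hypothesis~\eqref{Large t's2} is stated for each \emph{fixed} compact $J\subset\mathbb R\setminus\{0\}$, with an $o(1)$ rate that depends on $J$, so summing over $\sim\log(T_n/\sqrt n)\to\infty$ pieces does not yield a global $o(n^{-1/2})$.

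The paper avoids all of this by a much simpler choice of cutoff: fix an arbitrary $\varepsilon>0$ and take $T=C\varepsilon^{-1}\sqrt n$, so that $C/T=\varepsilon n^{-1/2}$. Then the small-$t$ contribution $|t|\le\delta_0\sqrt n$ is $\mathcal O(n^{-1})$ by the expansion you already derived, and the intermediate range $\delta_0\sqrt n\le|t|\le C\varepsilon^{-1}\sqrt n$ corresponds, after rescaling, to a \emph{single fixed} compact set $J_\varepsilon\subset\mathbb R\setminus\{0\}$, to which \eqref{Large t's2} applies directly and gives $o(n^{-1/2})$. One obtains $\sqrt n\sup_t|F_n(t)-A_{\om,n}(t)|\le\varepsilon+o(1)$, and letting $\varepsilon\to0$ finishes the proof. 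Replacing your growing $T_n$ by this $\varepsilon$-argument removes the need for any integration by parts or dyadic scheme.
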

Before proving Theorem~\ref{EdgeThm} let us introduce some additional notation and make some observations. 
It is clear that $A'_{\om,n}$ has the form $A'_{\om,n}(t)=Q_{\om,n}(t)e^{-\frac12 t^2}$ where $Q_{\om,n}(t)$ is a polynomial of degree $3$. In fact, if we set  $a_{n,\om}=\frac12\big(1-\Pi_{\om,n}''(0)/\sig_n^2\big)$ and $b_{\om,n}=\frac16\Pi_{\om,n}'''(0)/\sig_n^3$, we have that
\begin{equation}\label{Q om n}
\sqrt{2\pi}Q_{\om,n}(t)=1+a_{\om,n}+3b_{\om,n}t-a_{\om,n}t^2-b_{\om,n}t^3.
\end{equation}
By (\ref{D2 Pi}) we have $a_{\om,n}=\mathcal O(1/n)$, while $b_{\om,n}=\mathcal O(1/\sqrt n)$ (since $|\Pi_{\om,n}'''(0)|\leq cn$). Set $\varphi(t)=\frac1{\sqrt{2\pi}}e^{-\frac 12 t^2}$ and $u_{\om,n}=\frac{\Pi_{\om,n}^{(3)}(0)}{\sig_{n}^2}$, which converges to $\Sig^{-2}\int \Pi_\om^{(3)}(0)dP(\om)$ as $n\to\infty$. Using the above formula of $Q_{\om,n}$ together with $a_{\om,n}=\mathcal O(1/n)$ we conclude that
\[
\lim_{n\to\infty}\sqrt n\sup_{t\in\mathbb R}\left|\mu_\omega(\{S_ng(\omega,\cdot)\leq t\sigma_n\})-\Phi(t)-u_{\om,n}\sig_n^{-1}(t^2-1)\varphi(t)\right|=0.
\] 
\begin{rmk}
We remark that in the deterministic case (i.e. when $\Om$ is a singleton) we have $a_{\om,n}=0$ and $\Pi_{\om,n}'''(0)=n\ka_3$ for some $\ka_3$ which does not depend on $n$. Therefore, $u_{\om,n}=\ka_3\Sig^{-2}$ and we recover the  order one deterministic Edgeworth expansion that was established in \cite{FL}.
It seems unlikely that we can get the same results in the random case since this would imply that 
\[
\left|\Pi_{\om,n}^{(3)}(0)/n-\int \Pi_\om^{(3)}(0)dP(\om)\right|=o(n^{-\frac12}).
\]
The term $\Pi_{\om,n}^{(3)}(0)/n$ is an ergodic average, but such fast rate of convergence in the strong law of large number is not even true in general for sums of independent and identically distributed random variables. However, we note that under certain mixing assumptions for the base map $\sigma$, the rate of order $n^{-\frac12}\ln n$ was obtained in \cite{HafEDG} (see also \cite{HeSt}).
 \end{rmk}

\begin{rmk}
We note that condition~\eqref{Large t's2} holds whenever~\eqref{Large t's} is satisfied. 
\end{rmk}

\begin{proof}[Proof of Theorem \ref{EdgeThm}]
The purpose of the following arguments is to prove the second statement of Theorem \ref{EdgeThm}, and the proof of the first statement (the self-normalized Berry-Esseen theorem) is a by-product of these arguments. In particular, we will be using Taylor polynomials of order three of the function $\Pi_{\om,n}(\cdot)$,  but it order to prove the self normalized Berry-Esseen theorem we could have used only second order approximations.

Let $t\in\bbR$. Then by (\ref{Basic relation}) and Lemma \ref{lem:UnifExpDecayY2} when $t_n=t/\sig_{n}$ is sufficiently small, uniformly in $\om$ we have
\begin{equation}\label{One}
\int e^{it_n \cdot S_n g(\om,\cdot)}d\mu_\om=\phi_\om^{it_n}(v_\om^0)e^{\Pi_{\om,n}(it_n)}+|t_n|\mathcal O(r^n).
\end{equation}
As in (\ref{Second order}), since $\phi_\om^{0}(v_\om^0)=1$ and the derivative of $z\to \phi_\om^{z}(v_\om^0)$ vanishes at $z=0$ we have 
\[
|\phi_\om^{it_n}(v_\om^0)-1|\leq Ct_n^2.
\]
Using Lemma~\ref{lem:boundProdLam} and that $\sig_n \sim n^{\frac 12}\Sig$, we conclude  that when $n$ is sufficiently large and  $t_n=t/\sig_n$ is sufficiently small,
\begin{equation}\label{two}
\left|\int e^{it_n \cdot  S_n g(\om,\cdot)}d\mu_\om-e^{\Pi_{\om,n}(it_n)}\right|\leq C\big(t_n^2e^{-ct^2}+|t_n|r^n\big).
\end{equation}
where $c,C>0$ are some constant. Next, by considering the function $g(t)=e^{zt}$, where $z$ is a fixed complex number, we derive that 
\begin{equation}\label{e z}
\left|e^{z}-1-z\right|\leq |z|^2e^{\max\{0,\Re(z)\}}.
\end{equation}
Since $\sig_n \sim n^{\frac 12}\Sig$, Lemma~\ref{lem:boundProdLam} together with the fact that $\Sigma>0$ yields that $\Re(\Pi_{\om,n}(it_n))\leq -ct^2$ when $|t_n|$ is sufficiently small and $n$ is large enough, where $c>0$ is a constant which does not depend on $\om$, $t$ and $n$ (we can clearly assume that $c<\frac12$). It follows that  $\max\{0,\Re(t^2/2+\Pi_{\om,n}(it_n))\}\leq (\frac 12-c)t^2$. Applying (\ref{e z}) with $z=t^2/2+\Pi_{\om,n}(it_n)$ yields that when $n$ is sufficiently large and $|t_n|$ is sufficiently small,
\begin{equation}\label{three}
\left|e^{\Pi_{\om,n}(it_n)}-e^{-\frac 12 t^2}(1+\Pi_{\om,n}(it_n)+\frac12 t^2)\right|
=e^{-\frac12 t^2}|e^{z}-1-z|\leq e^{-ct^2}\left|\Pi_{\om,n}(it_n)+\frac12 t^2\right|^2.
\end{equation}
Next, using the formula for Taylor reminder of order $3$, we have that 
\begin{equation}\label{Four}
|\Pi_{\om,n}(it_n)+\frac12 t^2-\cP_{\om,n}(t)|\leq Cnt_n^4.
\end{equation}
Observe also that 
\[
\cP_{\om,n}(t)=\frac12t^2\big(1-\Pi_{\om,n}''(0)/\sig_n^2\big)-\Pi_{\om,n}'''(0)t^3/\sig_n^3.
\]
The second term on the right hand side is $\mathcal O(|t|^3)n^{-\frac12}$ while by
 (\ref{D2 Pi}) the first term is $\mathcal O(t^2/\sig_n^2)=\mathcal O(t^2)/n$. We conclude that 
\begin{equation}\label{Five}
\left|\Pi_{\om,n}(it_n)+\frac12 t^2\right|\leq C\max(t^2,|t|^3)n^{-\frac12}
\end{equation}
and hence 
\begin{equation}\label{Six}
\left|e^{\Pi_{\om,n}(it_n)}-e^{-\frac 12 t^2}(1+\Pi_{\om,n}(it_n)+\frac12 t^2)\right|
=e^{-\frac12 t^2}|e^{z}-1-z|\leq Ce^{-ct^2}\max(t^4,t^6)/n.
\end{equation}
From~\eqref{Four} and~\eqref{Six},
we conclude that
\begin{equation}\label{Seven}
\left|e^{\Pi_{\om,n}(it_n)}-e^{-\frac 12 t^2}(1+\cP_{\om,n}(t))\right|
\leq C''e^{-ct^2}\max(t^4,t^6)/n.
\end{equation}

Finally, using the Berry-Esseen inequality we derive that 
\begin{equation}\label{Tch}
\sup_{t\in\mathbb R}\left|\mu_\omega(\{S_ng(\omega,\cdot)\leq t\sigma_n\})-A_{\omega,n}(t)\right|\leq 
\int_0^T\bigg{\lvert}\frac{\mu_\omega(e^{\frac{it \cdot S_ng(\omega, \cdot)}{\sigma_n}})-e^{-\frac 1 2 t^2}(1+\cP_{\om,n}(t))}{t}\bigg{\rvert}\, dt +C/T,
\end{equation}
where $C$ is some constant.  
We have used here the fact that the derivative of $A_{\om,n}$ is bounded by some constants (since the coefficients of the polynomial $\cP_{\om,n}$ are bounded in $\om$ and $n$). In order to establish the first assertion of the theorem, we choose $T$ of the form $T=\del_0\sqrt n$, where $\del_0>0$ is sufficiently small. Indeed, observe that the above estimates imply that 
\[
\sup_{t\in\mathbb R}\left|\mu_\omega(\{S_ng(\omega,\cdot)\leq t\sigma_n \})-A_{\om,n}(t)\right|=\mathcal O(n^{-\frac12}).
\]
Set $\varphi(t)=\frac1{\sqrt{2\pi}}e^{-\frac 12 t^2}$.
Integrating both sides of the equation $A'_{\om,n}(t)=Q_{\om,n}(t)e^{-\frac12 t^2}$, where $Q_{\om,n}$ satisfies (\ref{Q om n}) and using that $a_{\om,n}=\mathcal O(1/n)$,  we conclude that
\begin{equation}\label{BE0}
\sup_{t\in\mathbb R}\left|\mu_\omega(\{S_ng(\omega,\cdot)\leq t\sigma_n \})-\Phi(t)-u_{\om,n}\sig_n^{-1}(t^2-1)\varphi(t)\right|=O(n^{-\frac12}).
\end{equation}
Recall that $u_{\om,n}=\frac{\Pi_{\om,n}^{(3)}(0)}{\sig^2_n}$, which converges to $\Sig^{-2}\int \Pi_\om^{(3)}(0)dP(\om)$ as $n\to\infty$, and in particular it is bounded. Therefore, $\sup_{t\in \R}|u_{\om,n}\sig_n^{-1}(t^2-1)\varphi(t)|=\mathcal O(n^{-\frac12})$,
which together with (\ref{BE0}) yields (\ref{SelfNor BE}).

Next, in order to prove the second item, 
fix some $\ve>0$ and choose $T$ of the form $C/T={\ve}n^{-\frac12}$. We then have that
\[
\begin{split}
& \int_0^T\bigg{\lvert}\frac{\mu_\omega(e^{\frac{it \cdot S_ng(\omega, \cdot)}{\sigma_n}})-e^{-\frac 1 2 t^2}(1+\cP_{\om,n}(t))}{t}\bigg{\rvert}\, dt +C/T \\
&\le \int_0^{\del_0 \sqrt n}\bigg{\lvert}\frac{\mu_\omega(e^{\frac{it \cdot S_ng(\omega, \cdot)}{\sigma_n}})-e^{-\frac 1 2 t^2}(1+\cP_{\om,n}(t))}{t}\bigg{\rvert}\, dt \\
&\phantom{\le}+\int_{\del_0 \sqrt n \leq |t|\leq \frac{C}{\ve}\sqrt n }\bigg{\lvert}\frac{\mu_\omega(e^{\frac{it \cdot S_ng(\omega, \cdot)}{\sigma_n}})-e^{-\frac 1 2 t^2}(1+\cP_{\om,n}(t))}{t}\bigg{\rvert}\, dt 
+\ve n^{-\frac12}.
\end{split}
\]
Using (\ref{Seven}) we see that the first integral on the above right hand side is of order $\mathcal O(n^{-1})$, while the second integral is $o(n^{-\frac12})$ by (\ref{Large t's2}).
\end{proof}
\begin{rmk}
In \cite{HafEDG} expansions of order larger than $1$ were obtained for some classes of interval maps under the assumption that the modulos of the characteristic function $\varphi_n(t)$ of $S_n g(\om,\cdot)$ does not exceed $n^{-r_1}$ when $|t|\in[K,n^{r_2}]$, where $K,r_1,r_2$ are some constants.
Of course, under such conditions we can obtain higher order expansions also in our setup, but since we do not have examples under which this condition holds true (expect from the example covered in \cite{HafEDG}),  the proof (which is  very close to \cite{HafEDG}) is omitted.
\end{rmk}
\subsubsection{Some asymptotic expansions for large deviations}\label{LDex}
In this section we again consider the 
scalar case when $d=1$. We will also assume that there exist constants $C_1,C_2,r>0$ so that for $\mathbb P$-a.e. $\om \in \Om$,   $z\in \C$ with $|z|\leq r$ and a sufficiently large $n\in \N$ we have
\begin{equation}\label{RPF}
C_1\leq \|\mathcal L_\om^{z,(n)}\| /|\lam_\om^{z,(n)}|\leq C_2,
\end{equation}
where $\lam_\om^{z, (n)}=\prod_{i=0}^{n-1}\lam_{\sigma^i \om}^z$.
Moreover,  we assume that there exists a constant $C>0$ such  that  for $\mathbb P$-a.e. $\om \in \Om$ and for any $t,s\in \R$, we have that 
\begin{equation}\label{LY}
\sup_{n\in\mathbb N}\|\cL_{\om}^{\te+is,(n)}\|/\|\cL_{\om}^{\te,(n)}\|\leq C(1+|\te|+|s|).
\end{equation}
These conditions are satisfied in the setup of~\cite[Chapter 5]{HK} (the second condition follows from the arguments in the Lasota-Yorke inequality which was proved in~\cite[Lemma 5.6.1]{HK}).

Our results in this subsection  will rely on the following lemma.
\begin{lemma}\label{Per}
Suppose that: 
\begin{enumerate}
\item  $\mathcal F$ is the Borel $\sig$-algebra on $\Om$;
\item $\sigma$ has a periodic point $\om_0$ (whose period is denoted by $n_0$), and $\sigma$ is continuous at each point that belongs to the orbit of $\om_0$;
\item $\mathbb P(U)>0$ for any open set $U$ that intersects the orbit  of $\om_0$;
\item  for any compact set $K\subset \R$ the family of  maps $\om\to\cL_\om^z,\,z\in K$ is uniformly continuous at the orbit points of $\om_0$;
\item  for any sufficiently small $\theta$ and $s\not=0$ the spectral radius of $\cL_{\om_0}^{\theta+is,(n_0)}$ is smaller than the  spectral radius of $\cL_{\om_0}^{\theta,(n_0)}$. 
\end{enumerate}
Then,  there exists $r>0$ with the following property: 
for $\mathbb P$-a.e. $\om$ and for any compact set $J\subset\bbR\setminus\{0\}$ there exist constants  $C_J(\om)$ and $c_J(\om)>0$ so that  for any sufficiently large $n$, $\theta\in [-r,r]$ and $s\in J$ we have
\[
\|\cL_{\om}^{\theta+is,(n)}\|\leq C_J(\om)e^{-c_J(\om) n}\|\cL_{\om}^{\theta,(n)}\|
\] 
\end{lemma}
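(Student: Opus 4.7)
The plan is to bootstrap the pointwise spectral gap at the periodic orbit of $\omega_0$, granted by assumption~(5), to a $\mathbb{P}$-a.e.\ exponential bound. The bootstrap proceeds in two stages: a perturbation argument that spreads the gap from $\omega_0$ to an open neighborhood (powered by assumption~(4)), followed by a block-concatenation driven by Birkhoff's ergodic theorem applied to that neighborhood, which has positive measure by assumption~(3).

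First I would use the compactness of $[-r,r]\times J$, the joint continuity of $z\mapsto\mathcal{L}_{\omega_0}^{z,(n_0)}$, and assumption~(5) to extract a uniform spectral gap: there is $\delta_0>0$ such that
\[
\rho(\mathcal{L}_{\omega_0}^{\theta+is,(n_0)})\le e^{-\delta_0}\rho(\mathcal{L}_{\omega_0}^{\theta,(n_0)})
\]
for all $(\theta,s)\in[-r,r]\times J$. Positivity and continuity of the denominator follow from (RPF) applied along the periodic orbit, where $\rho(\mathcal{L}_{\omega_0}^{\theta,(n_0)})$ and $|\lambda_{\omega_0}^{\theta,(n_0)}|$ are comparable. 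Iterating via the spectral radius formula (upper estimate) against the matching lower bound on $\|\mathcal{L}_{\omega_0}^{\theta,(kn_0)}\|$ provided again by (RPF) would let me pick $k=k(J)$ so large that $\|\mathcal{L}_{\omega_0}^{\theta+is,(kn_0)}\|\le e^{-\delta_0 k/2}\|\mathcal{L}_{\omega_0}^{\theta,(kn_0)}\|$ uniformly on $[-r,r]\times J$. Setting $N=kn_0$, continuity of $\sigma$ on the orbit combined with assumption~(4) then produces an open $V\ni\omega_0$ with $\mathbb{P}(V)>0$ such that the same inequality, up to an extra constant $C_0$, holds with any $\omega\in V$ in place of $\omega_0$, uniformly in $(\theta,s)\in[-r,r]\times J$.

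For the concatenation, I would write $n=QN+\ell$ with $0\le\ell<N$ and factor $\mathcal{L}_\omega^{\theta+is,(n)}$ into $Q$ consecutive block operators $\mathcal{L}_{\sigma^{lN}\omega}^{\theta+is,(N)}$ plus a tail of length $\ell$. Blocks starting in $V$ contribute a factor $C_0 e^{-\delta_0 k/2}$ to the ratio, while all other blocks (and the tail) contribute only a bounded factor $C_\star=C(1+r+\sup_{s\in J}|s|)$ via~\eqref{LY}. To convert the product $\prod_{l=0}^{Q-1}\|\mathcal{L}_{\sigma^{lN}\omega}^{\theta,(N)}\|$ appearing in the denominator into $\|\mathcal{L}_\omega^{\theta,(n)}\|$, I would invoke (RPF) both on each block and globally, together with the exact multiplicativity of the scalar cocycle $\lambda_\omega^{\theta,(n)}=\prod_{j=0}^{n-1}\lambda_{\sigma^j\omega}^\theta$; this absorbs an additional factor bounded by $C_2^Q/C_1$. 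Birkhoff's theorem applied to $\mathbf{1}_V$ along the subsequence of block starts yields $Q_V(n)/Q\to\mathbb{P}(V)>0$ for $\mathbb{P}$-a.e.\ $\omega$, so enlarging $k$ once more so that $\mathbb{P}(V)\delta_0 k/2$ dominates all the extra logarithmic constants ($\log C_0$, $\log C_\star$, $\log C_2$) produces the required $\mathbb{P}$-a.s.\ exponential rate $c_J(\omega)>0$.

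The main technical obstacle I anticipate lies in the perturbation step: converting assumption~(5), which is an eigenvalue statement for a single operator at $\omega_0$, into a genuine operator norm comparison that is simultaneously uniform in $(\theta,s)\in[-r,r]\times J$ and stable under the small perturbation of $\omega$ provided by assumption~(4). Carefully balancing the spectral gap against the constants in (RPF) and (LY) is the delicate point; once this uniform neighborhood-level estimate is in hand, the remaining Birkhoff/block argument is essentially the one used for Lemma~\ref{Per0} and in \cite[Lemma 2.10.4]{HK}.
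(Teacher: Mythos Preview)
Your overall strategy---extract a uniform spectral gap at the periodic orbit using assumption~(5), spread it to a neighborhood via assumption~(4), then concatenate blocks and invoke ergodicity---is exactly the paper's. But your implementation of the block step deviates from the paper's in a way that creates two related gaps.

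First, applying Birkhoff's theorem to $\mathbf{1}_V$ ``along the subsequence of block starts'' means applying it to the map $\sigma^N$. Ergodicity of $\sigma$ does not imply ergodicity of $\sigma^N$, so the limit $\lim_{Q\to\infty}Q_V(n)/Q$ need not equal $\mathbb P(V)$; it is the conditional expectation $\mathbb E[\mathbf{1}_V\mid\mathcal I_N]$, which may vanish on a set of positive $\mathbb P$-measure. Second, the final ``enlarging $k$ once more so that $\mathbb P(V)\delta_0 k/2$ dominates\ldots'' is circular: the neighborhood $V$ (and hence $\mathbb P(V)$) was already fixed in terms of $k$, so you cannot freely increase $k$ afterward, and there is no a~priori lower bound of the form $\mathbb P(V)\,k\to\infty$.

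The paper sidesteps both issues by organizing the bookkeeping differently. It first fixes $d$ large enough (and then $\varepsilon$ small enough) so that the single combined estimate
\[
\frac{\|\mathcal L_{\omega}^{\theta+is,(m)}\mathcal L_{\omega_1}^{\theta+is,(dn_0)}\|}{|\lambda_{\omega}^{\theta,(m)}\lambda_{\omega_1}^{\theta,(dn_0)}|}<\tfrac12
\]
holds for \emph{every} $m\ge 0$, every $\omega$, and every $\omega_1$ in the (then chosen) neighborhood $U$; the constants from \eqref{RPF} and \eqref{LY} are absorbed here, before $U$ is selected, so there is no circularity. It then decomposes $\mathcal L_\omega^{\theta+is,(n)}$ into blocks of \emph{variable} length bracketed by successive visits to $U$ (more precisely, along the subsequence $b_j=a_{jdn_0}(\omega)$ of visit times). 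Every block contributes a factor $<\tfrac12$ relative to the exactly multiplicative scalar cocycle $|\lambda_\omega^{\theta,(\cdot)}|$, so no ``bad'' blocks need to be balanced, and the number of blocks is counted directly via $\sigma$-ergodicity ($a_m/m\to 1/\mathbb P(U)$), not $\sigma^N$-ergodicity. Adopting this block decomposition fixes both gaps at once.
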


\begin{proof}
Denote by $	r(z),\,z\in\bbC$ the spectral radius of the deterministic transfer operator $\mathcal R_{z}:=\mathcal L_{\om_0}^{z,(n_0)}$. Let $J\subset\bbR\setminus\{0\}$ be a compact set. Since $\mathcal R_{z}$ is continuous in $z$ and $r(\te)$ is continuous around the origin, there exist $\del,d_0>0$ which depend on $J$ so that for any $\te\in[-r,r]$, $s\in J$ and $d\geq d_0$ we have
\[
\|\mathcal R_{\te+is}^d\|\leq (1-\del)^{d}r(\te)^d.
\]
Observe that we have also taken into account the last assumption in the statement of the lemma. 
Note that a deterministic version of (\ref{RPF}) holds true with the operators $\mathcal R_z$ and thus there is a constant $C>0$ such that 
\[
\|\mathcal R_\te^d\|\geq C r(\te)^d
\]
for any $\te\in[-r,r]$.
Let $K\subset\bbR$ be a bounded closed interval around the origin which contains $J$.
Fix some $d>d_0$ and let $\ve\in(0,1/2)$ and $\om_1\in\Om$ be so that 
\begin{equation}\label{U def}
\|\mcl_{\om_1}^{\te+is, (dn_0)}-\mcl_{\om_0}^{\te+is, (dn_0)}\|=\|\mcl_{\om_1}^{\te+is, (dn_0)}-\mathcal R_{\te+is}^{d}\|<\varepsilon \min\{r(\te)^d,1\},
\end{equation}
for any $\te\in[-r,r]$ and $s\in K$.
By (\ref{RPF}) we have
\begin{equation}\label{EQ1}
0<C_1\le \|\mcl_{\om}^{\te, (n)}\|/ \lvert \lam_{\om}^{\te, (n)}\rvert \leq C_2<\infty,
\end{equation}
for some constants $C_1$ and $C_2$ which do not depend on $\om$ and $n$. Therefore, if $\ve$ is small enough then
\begin{equation}\label{Ratio}
1/\lvert \lam_{\om_1}^{\te,(dn_0)} \rvert \leq C/(\|\mathcal R_{\te}^{d}\|-\varepsilon \min\{r(\te)^d,1\})\leq 
C'/(r(\te)^d-\varepsilon \min\{r(\te)^d,1\}),
\end{equation}for some constants  $C, C'>0$. 
We conclude that
\begin{equation}\label{WeConc}
\|\cL_{\om_1}^{\te+is, (dn_0)}\|/ \lvert \lam_{\om_1}^{\te, (dn_0)} \rvert \leq C\,\frac{\ve \min\{r(\te)^d,1\}+(1-\del)^{d}r(\te)^d}{r(\te)^d-\varepsilon \min\{r(\te)^d,1\}}\leq C''(\ve+(1-\del)^d),
\end{equation}
where $C''>0$ is another constant.
By (\ref{LY})  we have  that
\begin{equation}\label{B J}
\esssup_{\om \in \Om}\sup_{s\in J,\te\in[-r,r],n\in\mathbb N}\|\cL_{\om}^{\te+is,(n)}\|/\|\cL_{\om}^{\te,(n)}\|\leq B_J<\infty,
\end{equation}
for some constant $B_J$ which depends only on $J$. Fixing a sufficiently large $d$ and then a sufficiently small $\ve$ we conclude that for any $\te\in[-r,r]$, $s\in J$ and $n$ we have that 
\[
\frac{\|\cL_{\om}^{\te+is, (n)}\cL_{\om_1}^{\te+is,(dn_0)}\|}{|\lam_\om^{\te,(n)}\lam_{\om_1}^{\te,(dn_0)}|}\leq C_2 C'' B_J(\ve+(1-\del)^d)<\frac12.
\]
Indeed, 
\begin{eqnarray*}
\|\cL_{\om}^{\te+is,(n)}\cL_{\om_1}^{\te+is,(dn_0)}\|\leq \|\cL_{\om}^{\te+is,n}\|\cdot\|\cL_{\om_1}^{\te+is,(dn_0)}\|\leq B_J\|\cL_{\om}^{\te,(n)}\|\cdot |\lam_{\om_1}^{\te,(dn_0)}|C''(\ve+(1-\del)^d)\\\leq 
B_JC_2 |\lam_{\om}^{\te,(n)}|\cdot |\lam_{\om_1}^{\te,(dn_0)}|C''(\ve+(1-\del)^d),
\end{eqnarray*}
where in the first inequality we have used the submultiplicativity of operator norm, in the second we have used (\ref{WeConc}) and (\ref{B J}) and in the third one we have used (\ref{EQ1}).

Finally,  because of the fifth condition in the statement  of the lemma and since $r(\te)$ is continuous in $\te$ (around the origin), when $r$ is small enough we have that (\ref{U def}) holds true for any $\om_1\in U$, $\te\in[-r_0,r_0]$ and $s\in K$ and , where $U$ is a sufficiently small open neighborhood of the periodic point $\om_0$ and $r_0$ depends only on the function $r(\te)$. By ergodicity of $\sig$,\, for  $\mathbb P$-a.e. $\om \in \Om$ we have an infinite strictly increasing sequence $a_n=a_n(\om)$ of visiting times to $U$ so that $a_n/n$ converges to $1/P(U)$ as $n\to\infty$. Thus, by considering the subsequence $b_n=a_{ndn_0} (\om)$ 
 we can write $\cL_{\om}^{\te+is,(n)}$ as composition of blocks of the form $\cL_{\om'}^{\te+is,(m)}\cL_{\om_1}^{\te+is,(dn_0)}$ (and perhaps a single block of the form $\cL_{\om''}^{\te+is,(m)})$, where $m\geq0$ and $\om_1\in U$. The number of blocks is approximately $nP(U)/dn_0$ (i.e. when divided by $n$ it converges to $P(U)/dn_0$ as $n\to\infty$). 
  Therefore, 
\[
\|\cL_{\om}^{\te+is,(n)}\|\leq C_J(\om)|\lam_{\om}^{\te,n}|2^{-nP(U)/2dn_0},
\]
which together with (\ref{EQ1}) completes the proof of the lemma.
\end{proof}

Our main result here is the following theorem. 
\begin{thm}\label{LD EX}
Suppose that the conclusion of Lemma \ref{Per} holds true and that $\Sig^2>0$. Then for any sufficiently small $a>0$ and  for $\mathbb P$-a.e.  $\om \in \Om $ we have
\[
\mu_\om (\{x: S_n g(\om,x)\geq an\})\cdot e^{nI_{\om,n}(a)}=\frac{\phi_\om^{\te_{\om,n,a}}(v_\om^0)\sqrt{I_{\om,n}''(a)}}{\te_{\om,n,a}\sqrt{2\pi n}}(1+o(1)).
\]
Here, 
\[
I_{\om,n}(a)=\sup_{t\in[0,r]}(t\cdot a-\Pi_{\om,n}(t)/n)=\te_{\om,n,a}-\Pi_{\om,n}(\te_{\om,n,a})/n,
\]
where $r>0$ is any sufficiently small number.
\end{thm}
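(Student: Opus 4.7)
The plan is to use the classical Cram\'er exponential tilting technique, adapted to the twisted transfer operator setting. For $a>0$ sufficiently small, define the saddle point $\te=\te_{\om,n,a}\in(0,r]$ by the first-order condition $\Pi'_{\om,n}(\te)/n=a$; such a $\te$ exists uniquely and lies in a fixed compact subinterval of $(0,r]$ for all large $n$ because $\Pi''_{\om,n}(0)/n\to\Sig^2>0$ (by (\ref{D2 Pi}) and Lemma~\ref{lem:Lam''0}) and the implicit function theorem applies to the real-analytic map $t\mapsto\Pi'_{\om,n}(t)/n$. Legendre duality then gives $I_{\om,n}(a)=\te a-\Pi_{\om,n}(\te)/n$ and $I''_{\om,n}(a)=n/\Pi''_{\om,n}(\te)$. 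Write $Y_n:=S_ng(\om,\cdot)-an$ and $\sig_n^2:=\Pi''_{\om,n}(\te)$, and introduce the tilted probability $d\nu:=Z_{\om,n,\te}^{-1}e^{\te S_n g(\om,\cdot)}d\mu_\om$, where (\ref{Basic relation}) and Lemma~\ref{lem:UnifExpDecayY2} yield $Z_{\om,n,\te}=\phi_\om^\te(v_\om^0)e^{\Pi_{\om,n}(\te)}(1+\mathcal O(r_0^n))$ for some $r_0\in(0,1)$. A direct computation then produces
\[
\mu_\om(\{S_ng(\om,\cdot)\geq an\})=\phi_\om^\te(v_\om^0)\,e^{-nI_{\om,n}(a)}(1+o(1))\cdot J_{\om,n},\qquad J_{\om,n}:=\int_0^\infty e^{-\te y}\,dF_{\om,n}(y),
\]
where $F_{\om,n}$ is the law of $Y_n$ under $\nu$. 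The task reduces to showing $J_{\om,n}\sim \frac{\sqrt{I''_{\om,n}(a)}}{\te\sqrt{2\pi n}}$.

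For this I would establish a local limit theorem for $Y_n$ under $\nu$, producing a density $p_{\om,n}$ with $\sup_{y}|\sig_n p_{\om,n}(y)-\varphi(y/\sig_n)|\to 0$. The $\nu$-characteristic function equals $Z_{\om,n,\te}^{-1}e^{-isan}\int \mathcal L_\om^{\te+is,(n)}v_\om^0\,dm$; for $|s|$ in a neighbourhood of $0$, analytic perturbation theory around $\te$ together with the second-order Taylor expansion of $\Pi_{\om,n}(\te+is)$ (with error controlled via the Cauchy integral formula, as in the proof of Theorem~\ref{EdgeThm}) gives the Gaussian shape with variance $\sig_n^2$, while for $|s|$ in any compact subset of $\R\setminus\{0\}$, Lemma~\ref{Per} furnishes the exponential decay of $\|\mathcal L_\om^{\te+is,(n)}\|/|\lam_\om^{\te,(n)}|$. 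Carrying out the Fourier inversion exactly as in the proof of Theorem~\ref{LLTthm}, but with the base point shifted from $0$ to $\te$, yields the desired density approximation.

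Given the density approximation, the Laplace integral is evaluated by the substitution $u=y/\sig_n$ followed by completing the square:
\[
\int_0^\infty e^{-\te y}\sig_n^{-1}\varphi(y/\sig_n)\,dy=e^{(\te\sig_n)^2/2}\bigl(1-\Phi(\te\sig_n)\bigr)\sim\frac{1}{\te\sig_n\sqrt{2\pi}}=\frac{\sqrt{I''_{\om,n}(a)}}{\te\sqrt{2\pi n}},
\]
where the last asymptotic uses Mills' ratio, valid because $\te$ is bounded below in $n$ and $\sig_n\sim c\sqrt n$, so $\te\sig_n\to\infty$. The error contribution from $p_{\om,n}-\sig_n^{-1}\varphi(\cdot/\sig_n)$ is $o(1/(\te\sig_n))$ by the uniform local-limit bound, which is negligible compared to the leading term. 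The main technical obstacle is the density-level local limit theorem under $\nu$ uniformly over $\te=\te_{\om,n,a}$; fortunately the conclusion of Lemma~\ref{Per} is stated for all $\te\in[-r,r]$ simultaneously (not just for $\te=0$), which is precisely what is needed since the saddle point varies with $\om$ and $n$. All estimates must be carried out on a single full-measure set of $\om$ where (\ref{8x}), the conclusion of Lemma~\ref{Per}, and the underlying Oseledets structure hold simultaneously.
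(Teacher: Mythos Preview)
Your overall Cram\'er tilting strategy matches the paper's, but there is a genuine gap in how you propose to evaluate the Laplace--Stieltjes integral $J_{\om,n}=\int_0^\infty e^{-\te y}\,dF_{\om,n}(y)$. You invoke a density-level local limit theorem for $Y_n$ under the tilted measure $\nu$, claiming that Fourier inversion ``as in Theorem~\ref{LLTthm}'' produces a density $p_{\om,n}$. It does not: Theorem~\ref{LLTthm} is an interval-type LLT, proved by testing against functions $h$ whose Fourier transform has compact support, and its proof only requires control of the characteristic function on compact subsets of $\R\setminus\{0\}$. A genuine density would require the tilted characteristic function $s\mapsto\int e^{isY_n}d\nu$ to be integrable over all of $\R$, but Lemma~\ref{Per} (and likewise (\ref{LY})) gives decay of $\|\mcl_\om^{\te+is,(n)}\|/\|\mcl_\om^{\te,(n)}\|$ only for $s$ in compact sets, with no useful control as $|s|\to\infty$. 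In the setting of piecewise expanding maps there is no reason for $S_ng(\om,\cdot)$ to have a density at all, so the line $J_{\om,n}=\int_0^\infty e^{-\te y}p_{\om,n}(y)\,dy$ is not justified.

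The paper instead establishes an order-one Edgeworth expansion for the (non-normalized) \emph{distribution function} $G_{\om,n}(x)=\tilde F_{\om,n}((-\infty,x\sqrt n+an])$ of the tilted and rescaled variable, by rerunning the argument of Theorem~\ref{EdgeThm} with the base point of the perturbation shifted from $0$ to $\te_{\om,n,a}$; the characteristic function of $G_{\om,n}$ is given by~(\ref{hat G}), and Lemma~\ref{Per} supplies exactly the compact-set decay needed for the Esseen smoothing inequality. Once $G_{\om,n}$ is approximated to accuracy $o(n^{-1/2})$ by an explicit function, $J_{\om,n}$ is evaluated via integration by parts against $e^{-\te\sig_n u}$ (this is the content of Lemmas~3.2--3.3 in~\cite{FH}). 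Note that a bare Berry--Esseen bound $O(n^{-1/2})$ on $G_{\om,n}-\Phi$ would not suffice here: after integration by parts the remainder would be of the same order as the leading term $(\te\sig_n\sqrt{2\pi})^{-1}\sim n^{-1/2}$, so the first Edgeworth correction is essential. Your Mills-ratio computation for the Gaussian contribution is correct, but the passage to it must go through the CDF-level expansion rather than a density.
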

\begin{rmk}
Set 
\[
I(a)=\sup_{t\in[0,r]}(t\cdot a-\Lambda(t))=\te_a-\Lambda(\te_a).
\]
Then 
\[
\lim_{n\to\infty} I_{\om,n}(a)=I(a)\,\,\text{ and }\,\,\lim_{n\to\infty} \te_{\om,n,a}=\te_a.
\]
Furthermore, we have that $\lim_{n\to\infty} I''_{\om,n}(a)=I''(a)$ (using the duality of Fenchel-Legandre transforms).
\end{rmk}

\begin{proof}
The proof follows the general scheme used in the proof of~\cite[Theorem 2.2]{FH} together with arguments similar to the ones in the proof of Theorem~\ref{EdgeThm}. Therefore, we will only provide a sketch of the arguments. 
Let $a$ be sufficiently small.
Denote by $F_n^\om$ the distribution of $S_n g(\om,\cdot)$ and set
\[
d\tilde F_{\om,n}(x)=\left(e^{\theta_{\om,n,a}x}/\lam_{\om}^{\theta_{\om,n,a},(n)}\right) \,dF_{\om,n}(x).
\]  
Note that $d\tilde F_{\om,n}$ is a finite measure, which in general is not a probability measure. Set $G_{\om,n}(x)=\tilde F_{\om,n}((-\infty,x\sqrt n+an])$. Arguing as in the proof of~\cite[Theorem 2.3]{FH} (and using the consequence of Lemma \ref{Per}), it is enough to show that the non-normalized distribution functions $G_{\om,n}$ admit Edgeworth expansions of order $1$ (see Lemmas 3.2 and 3.3 in~\cite{FH}). Observe that (when $|s|$ is sufficiently small),
\begin{equation}\label{hat G}
\hat G_{\om,n}(s\sqrt n)=
(e^{-isna}/\lam_{\om}^{\te_{\om,n,a},(n)})\int e^{(\te_{\om,n,a}+is) S_n g(\om,\cdot)}d\phi_\om^0
=\bar\mu_{\om,n}(s)\phi_{\om}^{\te_{\om,n,a}+is}(v_\om^0)+\del_{\om,n}(s),
\end{equation}
 where
\[
\begin{split}
\bar\mu_{\om,n}(s) &=e^{-iasn}\lam_\om^{\te_{\om,n,a}+is,(n)}/\lam_\om^{\te_{\om,n,a},(n)}\\
&=e^{\Pi_{\om,n}(\te_{\om,n,a}+is)-\Pi_{\om,n}(\te_{\om,n,a})-ians}\\
&=e^{\Pi_{\om,n}(\te_{\om,n,a}+is)-\Pi_{\om,n}(\te_{\om,n,a})-i\Pi_{\om,n}'(0)s},
\end{split}
\]
and $\del_{\om,n}(z)$ is an holomorphic function of $z$ such that, uniformly in $\om$ we have  $\del_{\om,n}(z)=\mathcal O(r^n)$ for some $r\in(0,1)$ (and hence all of the derivatives of $\del_{\om,n}$ at zero are at most of the same order). By arguing as in the proof of Theorem \ref{EdgeThm} we obtain Edgeworth expansions of order $1$ for $G_{\om,n}$. 
\end{proof}

\section{Hyperbolic dynamics}
The purpose of this section is to briefly discuss and indicate that  almost all of our main results can be extended to the class of random hyperbolic dynamics introduced in~\cite[Section 2.]{DFGTV2}. We stress that the spectral approach developed in~\cite{DFGTV1} for the random piecewise expanding dynamics has been extended to the 
random hyperbolic case in~\cite{DFGTV2} for the real-valued observables. By combining techniques developed in the present paper together with those in~\cite{DFGTV2}, we can now treat the case of vector-valued observables. In addition, we are not only able to provide the versions of the results in~\cite[Sections 7 and 8]{DFGTV2} for vector-valued observables but we can also establish versions of  almost all other results covered in the present paper (that have not been established previously even for real-valued observables).  

Let $X$ be a finite-dimensional $C^\infty$ compact connected Riemannian manifold. Furthermore, let $T$ be a topologically transitive Anosov diffeomorphism of class $C^{r+1}$ for $r>2$. As before, let $(\Om, \mathcal F, \mathbb P)$ be a probability space such that $\Om$ is a Borel subset of a separable, complete metric space. Furthermore, let $\sig \colon \Om \to \Om$ be a homeomorphism. As in~\cite[Section 3]{DFGTV2}, we now build a cocycle $(T_\om)_{\om \in \Om}$ such that all $T_\om$'s are Anosov diffeomorphisms that belong to a sufficiently small neighborhood of $T$ in the  $C^{r+1}$ topology on $X$. Furthermore, we require that $\om \to T_\om$ is 
measurable. Let $\mcl_\om$ be the transfer operator associated to $T_\om$. It is was verified in~\cite[Section 3]{DFGTV2} that conditions (C0) and (C2)-(C4) hold, with:
\begin{itemize}
\item $\mathcal B=(\mathcal B, \lVert \cdot \rVert_{1,1})$ is the space $\mathcal B^{1,1}$ which belongs to the class of anisotropic Banach spaces introduced by Gou{\"e}zel and  Liverani~\cite{GL}. We stress that in this setting the second altenative in (CO) holds. Namely, $\mathcal B$ is separable and the cocycle of transfer operators is strongly measurable;
\item (C3) holds with constant $\alpha^N$ and $\beta^N$.
\end{itemize}
We recall that elements of $\mathcal B$ are distributions of order $1$. By  $h(\varphi)$ we will denote the action of $h\in \mathcal B$ on a test function $\varphi$.
 We note that in this setting, it was proved in~\cite[Lemma 3.5. and Proposition 3.6.]{DFGTV2} that the version of Lemma~\ref{lem:qc+1dim} holds true. Moreover, one can show (see~\cite[Proposition 3.3. and Proposition 3.6.]{DFGTV2}) that the top Oseledets space $Y(\om)$ is spanned by a Borel probability measure $\mu_\om$ on $X$.

We now consider a suitable class of observables. 
Let us fix a measurable map $g\colon \Om \times X\to \R^d$  such that:
\begin{itemize}
\item $g(\om, \cdot)\in C^r$ and $\esssup_{\om \in \Om} \lVert g(\om, \cdot)\rVert_{C^r}<\infty$;
\item for $\mathbb P$-a.e. $\om \in \Om$ and $1\le i\le d$, 
\[
\int_X g^i(\om, \cdot) \, d\mu_\om =0.
\]
\end{itemize}
We recall (see~\cite[p. 634]{DFGTV2}) that for $h\in \mathcal B$ and $g\in C^r(X, \C)$ we can define $g\cdot h\in \mathcal B$. Furthermore, the action of $g\cdot h$ as a distribution is given by 
\[
(g\cdot h)(\varphi)=h(g\varphi), \quad \varphi \in C^1(X, \mathbb C).
\]
This enables us to introduce twisted transfer operators. Indeed, for $\theta \in \C^d$ we introduce $\mcl_\om^\theta \colon \mathcal B\to \mathcal B$ by
\[
\mcl_\om^\theta h=\mcl_\om (e^{\theta \cdot g(\om, \cdot)}\cdot h), \quad h\in \mathcal B.
\]
By arguing as in the proof of~\cite[Proposition 4.3.]{DFGTV2}, one can establish the version of Lemma~\ref{l49} in this setting. 

Let us now introduce  appropriate versions of spaces $\mathcal S$ and $\mathcal S'$ from Section~\ref{lar}  in the present context. Let $\mathcal S'$ denote the space of all measurable maps $\mathcal V\colon \Omega \to \mathcal B$ such that 
\[
\lVert \mathcal V\rVert_\infty:=\esssup_{\omega \in \Omega}\lVert \mathcal V(\omega)\rVert_{1,1}<\infty.
\]
Then, $(\mathcal S', \lVert \cdot \rVert_\infty)$ is a Banach space. Let $\mathcal S$ consist of all $\mathcal V\in \mathcal S'$ with the property that $\mathcal V(\omega)(1)=0$ for $\mathbb P$-a.e.  $\omega \in \Omega$, where $1$ denotes the observable taking the value $1$ at all points. Then, $\mathcal S$ is a closed
subspace of $\mathcal S'$ (see~\cite[p.641]{DFGTV2}). 

For $\theta \in \mathbb C^d$ and $\mathcal W\in \mathcal S$, set 
\[
F(\theta, \mathcal W)(\omega)=\frac{\mathcal L_{\sigma^{-1}\omega}^\theta (\mathcal W(\sigma^{-1}\omega)+\mu_{\sigma^{-1}\omega})}{\mathcal L_{\sigma^{-1}\omega}^\theta (\mathcal W(\sigma^{-1}\omega)+\mu_{\sigma^{-1}\omega})(1)}-\mathcal W(\omega)-\mu_\omega, \quad \omega \in \Omega.
\]
By arguing as in the proofs of Lemma~\ref{lem:FwellDef} and~\cite[Lemma 5.3.]{DFGTV2}, we find that $F$ is a well-defined  analytic map on $\mathcal D=\{\theta \in \mathbb C^d: |\theta| \le \epsilon \} \times B_{\mathcal S}(0, R)$ for some $\eps, R>0$, where $B_{\mathcal S}(0, R)$ denotes the open ball in $\mathcal S$ of radius $R$ centered at the origin.

The following is a version of Lemma~\ref{thm:IFT} in the present setting. 
\begin{lemma}
By shrinking $\epsilon>0$ if necessary, we have that there exists $O\colon \{ \theta \in \C^d: |\theta|<\ep \} \to \mc S$ analytic in $\theta$ such that 
\begin{equation}
F(\theta, O(\theta))=0.
\end{equation}
\end{lemma}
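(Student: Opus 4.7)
The plan is to mirror the proof of Lemma~\ref{thm:IFT} step by step, using the complex analytic implicit function theorem on Banach spaces applied to $F$ at $(0,0)$. First I would verify the base case $F(0,0)=0$: when $\theta=0$ and $\mathcal{W}=0$, equivariance of $\mu_\omega$ under the transfer operator cocycle gives $\mathcal{L}_{\sigma^{-1}\omega}\mu_{\sigma^{-1}\omega}=\mu_\omega$, and since each $\mu_\omega$ is a probability measure we have $(\mathcal{L}_{\sigma^{-1}\omega}\mu_{\sigma^{-1}\omega})(1)=\mu_\omega(1)=1$, so the ratio equals $\mu_\omega$ and $F(0,0)(\omega)=0$.

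Next I would compute the partial derivative $D_{d+1}F(0,0)$. By the same quotient-rule computation as in Proposition~\ref{difF}, one finds
\[
(D_{d+1}F(0,0)\mathcal{X})(\omega)=\mathcal{L}_{\sigma^{-1}\omega}(\mathcal{X}(\sigma^{-1}\omega))-\mathcal{X}(\omega).
\]
I would check this lands in $\mathcal{S}$: since $(\mathcal{L}_{\sigma^{-1}\omega} h)(1)=h(1\circ T_{\sigma^{-1}\omega})=h(1)$ for any $h\in\mathcal{B}$, and $\mathcal{X}(\omega')(1)=0$ for $\mathbb{P}$-a.e.\ $\omega'$ by definition of $\mathcal{S}$, the image vanishes on the constant $1$.

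The heart of the argument is showing that $D_{d+1}F(0,0)\colon\mathcal{S}\to\mathcal{S}$ is a bijection. For injectivity, if $D_{d+1}F(0,0)\mathcal{X}=0$ then $\mathcal{L}_\omega\mathcal{X}(\omega)=\mathcal{X}(\sigma\omega)$, so $\mathcal{X}(\omega)$ would lie in the top Oseledets space of $\mcl$; by the hyperbolic analogue of Lemma~\ref{lem:qc+1dim} this space is one-dimensional and spanned by $\mu_\omega$, but $\mu_\omega(1)=1\neq 0=\mathcal{X}(\omega)(1)$ forces $\mathcal{X}=0$. For surjectivity, given $\mathcal{X}\in\mathcal{S}$ I would set
\[
\tilde{\mathcal{X}}(\omega):=-\sum_{j=0}^\infty \mathcal{L}_{\sigma^{-j}\omega}^{(j)}\mathcal{X}(\sigma^{-j}\omega)
\]
and invoke the exponential decay property (C4) (adapted to the anisotropic Banach space $\mathcal{B}^{1,1}$, where the zero-mean condition $\int f\, dm=0$ is replaced by $h(1)=0$) to guarantee absolute convergence in $\lVert\cdot\rVert_\infty$ and membership in $\mathcal{S}$. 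A direct telescoping check then yields $D_{d+1}F(0,0)\tilde{\mathcal{X}}=\mathcal{X}$.

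The main obstacle I expect is the surjectivity step, because the decay estimate in the hyperbolic setting is phrased in terms of distributions annihilating the constant function rather than functions integrating to zero against $m$, and one must be sure that this is the correct invariant subspace on which (C4) (as verified in~\cite{DFGTV2}) yields uniform exponential contraction of $\mathcal{L}_\omega^{(n)}$. Assuming that, bijectivity of $D_{d+1}F(0,0)$ together with the analyticity of $F$ on $\mathcal{D}$ (established via the analogues of the lemmas used for Lemma~\ref{lem:FwellDef}) permits a direct application of the complex analytic implicit function theorem in Banach spaces to produce the desired $O(\theta)$ on a possibly smaller neighborhood of $0$ with $F(\theta,O(\theta))=0$.
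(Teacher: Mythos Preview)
Your proposal is correct and follows essentially the same approach as the paper's proof: compute $D_{d+1}F(0,0)$, establish its bijectivity via the one-dimensionality of the top Oseledets space (injectivity) and the explicit series inverse (surjectivity), and then invoke the analytic implicit function theorem. The obstacle you flag---that the uniform exponential decay must be stated for $h\in\mathcal B$ with $h(1)=0$ rather than $\int h\,dm=0$---is exactly what the paper addresses first by citing the estimate $\lVert\mcl_\om^{(n)}h\rVert_{1,1}\le De^{-\lambda n}\lVert h\rVert_{1,1}$ from~\cite[p.~636]{DFGTV2}, after which it simply refers back to the proof of Lemma~\ref{thm:IFT}.
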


\begin{proof}
We first note that (see~\cite[p.636]{DFGTV2}) that there exist $D, \lambda >0$ such that 
\[
\lVert \mcl_\om^{(n)} h\rVert_{1,1}\le De^{-\lambda n}\lVert h\rVert_{1,1}, \quad \text{for $h\in \mathcal B$, $h(1)=0$ and $n\in \N$.}
\]
Moreover, the same arguments as in the proof of Proposition~\ref{difF} (see also~\cite[Proposition 5.4.]{DFGTV2}) yield that 
\[
(D_{d+1}F(0,0) \mathcal W)(\om)=\mcl_{\sigma^{-1}\om}\mathcal W(\sigma^{-1}\om)-\mathcal W(\om), \quad \text{for $\om \in \Om$ and $\mathcal W\in \mathcal S$.}
\]
Now by arguing exactly as in the proof of Lemma~\ref{thm:IFT} we conclude that $D_{d+1}F(0, 0)$ is invertible and thus the desired  conclusion follows from the implicit function theorem. 
\end{proof}
Let $\Lam(\theta)$ be the largest Lyapunov exponent associated to the twisted cocycle $\mcl^\theta=(\mcl_\om^\theta)_{\om \in \Om}$. 
Let
\[
\mu_\om^\theta := \mu_\om +O(\theta)(\om), \quad \text{for $\theta \in \mathbb C^d$, $|\theta|<\eps$.}
\]
Observe that  $\mu_\om^\theta(1) =1$ and by the previous lemma, $\theta \mapsto \mu_\om^\theta$ is analytic.
 Let us define
\[
 \hat\Lambda (\theta) :=  \int_\Om \log \Big|\mu_\om^\theta( e^{\theta \cdot  g(\om, x)} ) \Big|\, d\bbp(\om),
\]
and
\[
\lot :=  \mu_\om^\theta( e^{\theta \cdot  g(\om, x)})
=( \mcl_\om^\theta \mu_\om^\theta)(1).
\]
The proof of the following result is analogous to the proof of~\cite[Lemma 6.1.]{DFGTV2} (see also the Lemmas in Section~\ref{LMB}).
\begin{lemma}
\begin{enumerate}
\item[(1)] For every $\theta \in B_{\C^d}(0,\ep):= \{ \theta \in \C : |\theta|<\ep \}$, we have $ \hat\Lambda (\theta)\leq \Lambda (\theta)$.
\item[(2)] $\hat\Lambda$ is differentiable on a neighborhood of 0, and for each $i\in \{1, \ldots, d\}$ we have that
\[
D_i\hat \Lambda (\theta)=
\Re \Bigg( \int_\Om \frac{ \overline{\lot}  ( \mu_\om^\theta( g^i(\om, \cdot)e^{\theta \cdot g(\om, \cdot)})+  (D_i O(\theta)) (\om) (e^{\theta \cdot  g(\om, \cdot)}) )}{|\lot |^2}\, d\bbp(\om) \Bigg),
\]
where $D_i$ denotes the derivative with respect to $i$-th component of $\theta$.
\item[(3)]For $i\in \{1, \ldots, d\}$, 
 we have that $D_i \hat \Lambda(0)=0$.
\end{enumerate}
\end{lemma}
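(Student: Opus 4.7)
The three parts parallel Lemmas~\ref{lem:lowerBoundLam}, \ref{difflambda} and~\ref{zero} respectively, so the main task is to verify that the arguments from the piecewise-expanding section carry over to the hyperbolic Banach space $\mathcal{B}^{1,1}$ with vector-valued $\theta \in \mathbb{C}^d$.

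For part (1), the plan is to exploit the equation $F(\theta, O(\theta)) = 0$, which upon unfolding reads $\mcl_\om^\theta \mu_\om^\theta = \lam_\om^\theta \mu_{\sigma\om}^\theta$ for $\mathbb P$-a.e.\ $\om$. Iterating gives $\mcl_\om^{\theta,(n)}\mu_\om^\theta = \bigl(\prod_{j=0}^{n-1}\lam_{\sigma^j\om}^\theta\bigr) \mu_{\sigma^n\om}^\theta$, and taking norms together with submultiplicativity of $\|\mcl_\om^{\theta,(n)}\|$ yields
\[
\Bigl|\prod_{j=0}^{n-1}\lam_{\sigma^j\om}^\theta\Bigr| \cdot \|\mu_{\sigma^n\om}^\theta\|_{1,1} \leq \|\mcl_\om^{\theta,(n)}\| \cdot \|\mu_\om^\theta\|_{1,1}.
\]
Dividing by $n$, taking $\log$ and $n\to\infty$: Birkhoff's ergodic theorem drives the left-hand side to $\hat\Lambda(\theta)$, the operator norm produces $\Lambda(\theta)$, and the correction terms $\tfrac1n\log\|\mu_\om^\theta\|_{1,1}$ vanish once we know that $\|\mu_\om^\theta\|_{1,1}$ is uniformly bounded above and below in $\om$. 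The upper bound is immediate from $O(\theta) \in \mc{S}'$; the lower bound follows from $\mu_\om^\theta(1) = 1$ combined with continuity of the evaluation $h \mapsto h(1)$ on $\mathcal{B}^{1,1}$.

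For part (2), I will first shrink $\eps$ so that $|\lam_\om^\theta| \geq 1/2$ uniformly in $\om$; this uses $\lam_\om^0 = 1$, analyticity of $\theta \mapsto \lam_\om^\theta = (\mcl_\om^\theta \mu_\om^\theta)(1)$, and the uniform-in-$\om$ analytic bound of the hyperbolic analogue of Lemma~\ref{l49}. Choosing a holomorphic branch of $\log$ on a neighbourhood of $1$ gives $\hat\Lambda(\theta) = \Re \int_\Om \log \lam_\om^\theta\,d\mathbb P(\om)$ with integrand uniformly bounded, so differentiation under the integral is justified and produces
\[
D_i \hat\Lambda(\theta) = \Re \int_\Om \frac{D_i\lam_\om^\theta}{\lam_\om^\theta}\,d\mathbb P(\om)=\Re \int_\Om \frac{\overline{\lam_\om^\theta}\, D_i\lam_\om^\theta}{|\lam_\om^\theta|^2}\,d\mathbb P(\om).
\]
I then expand $D_i\lam_\om^\theta = D_i\bigl[\mu_\om^\theta(e^{\theta\cdot g(\om,\cdot)})\bigr]$ by the product rule, using $\mu_\om^\theta = \mu_\om + O(\theta)(\om)$ and the analyticity of $O(\theta)$; this reproduces the formula in the statement.

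For part (3), I substitute $\theta = 0$ in the formula from part (2): $\lam_\om^0 = 1$, $\mu_\om^0 = \mu_\om$ and $e^{0\cdot g}=1$, which gives
\[
D_i \hat\Lambda(0) = \Re \int_\Om \bigl[\mu_\om(g^i(\om,\cdot)) + (D_iO(0))(\om)(1)\bigr]\,d\mathbb P(\om).
\]
The first summand vanishes by the fiberwise centering hypothesis $\int g^i d\mu_\om = 0$. The second vanishes because $O(\theta) \in \mc{S}$ means $O(\theta)(\om)(1) \equiv 0$ in $\theta$, whence $(D_iO(0))(\om)(1) = 0$. The only real obstacle I anticipate is the uniform analytic control (the lower bound on $|\lam_\om^\theta|$ and on $\|\mu_\om^\theta\|_{1,1}$) in the anisotropic setting, since elements of $\mathcal{B}^{1,1}$ are distributions; however, continuity of evaluation at the test function $1$ together with the hyperbolic analogue of Lemma~\ref{thm:IFT} furnishes exactly what is needed.
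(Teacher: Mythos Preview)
Your proposal is correct and follows essentially the same route as the paper, which simply states that the proof is analogous to \cite[Lemma~6.1]{DFGTV2} and refers back to Lemmas~\ref{lem:lowerBoundLam}, \ref{difflambda} and~\ref{zero}. Your plan spells out precisely these analogues in the anisotropic setting: the equivariance relation $\mcl_\om^\theta\mu_\om^\theta=\lam_\om^\theta\mu_{\sigma\om}^\theta$ combined with Birkhoff and the uniform two-sided bounds on $\|\mu_\om^\theta\|_{1,1}$ for part~(1), differentiation under the integral sign after the uniform lower bound $|\lam_\om^\theta|\geq 1/2$ for part~(2), and the observation $D_iO(0)\in\mc S$ for part~(3); the only cosmetic difference is that in part~(3) the paper phrases the vanishing of $(D_iO(0))(\om)(1)$ via the implicit function theorem formula $D_iO(0)=-D_{d+1}F(0,0)^{-1}D_iF(0,0)\in\mc S$, whereas you obtain it directly from the analyticity of $O$ as a map into the closed subspace $\mc S$, which is an equivalent and arguably cleaner justification.
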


\begin{lemma}
\begin{enumerate}
\item[(1)] For $\theta \in \mathbb C^d$ sufficiently close to $0$, the twisted cocycle $\mathcal L^\theta=(\mcl_\om^\theta)_{\om \in \Om}$ is quasi-compact. Furthermore, the top Oseledets space of $\mathcal L^\theta$ is one-dimensional.
\item[(2)] The map $\theta \mapsto \Lambda (\theta)$ is differentiable near $0$ and $D_i \Lambda (0)=0$ for $i\in \{1, \ldots, d\}$.
\end{enumerate}
\end{lemma}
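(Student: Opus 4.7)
The proof will parallel those of Theorem~\ref{cor:quasicompactness} and Lemma~\ref{cor:LamHatLam} in the piecewise-expanding case, since all structural ingredients carry over to the present hyperbolic setting thanks to~\cite{DFGTV2} and the auxiliary results recalled above. The first step is to rewrite the identity $F(\theta, O(\theta))=0$ together with the formula for $\lambda_\om^\theta$ as the equivariance relation
\[
\mcl_\om^\theta \mu_\om^\theta = \lambda_\om^\theta \, \mu_{\sigma\om}^\theta \quad \text{for } \mathbb P\text{-a.e. } \om \in \Om,
\]
so that $\om \mapsto \mu_\om^\theta$ generates a measurable, equivariant, one-dimensional subcocycle of $\mcl^\theta$. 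Since $\theta \mapsto \mu_\om^\theta$ is analytic with $\esssup_\om \|\mu_\om^\theta\|_{1,1}<\infty$, Birkhoff's ergodic theorem applied to $\log|\lambda_\om^\theta|$ shows that the associated equivariant direction has exponential growth rate $\hat\Lambda(\theta)$.

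For quasi-compactness of $\mcl^\theta$, I would verify that the Lasota-Yorke inequality satisfied by $\mcl^0$ (established in~\cite[Section~3]{DFGTV2} with uniform coefficients $\alpha^N,\beta^N$) persists for $\mcl^\theta$ with slightly perturbed coefficients when $|\theta|$ is small. This is a consequence of the hyperbolic analogue of Lemma~\ref{l49} and the fact that multiplication by $e^{\theta\cdot g(\om,\cdot)}\in C^r$ is a bounded operation on $\mathcal B$ that depends continuously on $\theta$ uniformly in $\om$. Combining this with the growth lower bound $\hat\Lambda(\theta)$ from the previous step and the Hennion-type criterion (cf.~\cite[Lemma~2.1]{DFGTV1}) yields $\kappa(\mathcal R^\theta)<\hat\Lambda(\theta)\le \Lambda(\theta)$, establishing quasi-compactness for $\theta$ in some neighborhood of $0$. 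For the one-dimensionality of $Y^\theta(\om)$, I would invoke the stability of the top Oseledets block under analytic perturbation of quasi-compact cocycles, precisely the argument used for Theorem~\ref{cor:quasicompactness}, which rests on the Kato-type perturbation machinery adapted to multiplicative ergodic theory in~\cite{DFGTV1, DFGTV2}. Since the hyperbolic analogue of Lemma~\ref{lem:qc+1dim} gives $\dim Y^0(\om)=1$, the same conclusion follows for $\theta$ sufficiently close to $0$, and the resulting one-dimensional space $Y^\theta(\om)$ is then spanned by $\mu_\om^\theta$.

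For part (2), combining the equivariance with one-dimensionality and the MET, for $\mathbb P$-a.e.\ $\om\in\Om$ we obtain
\[
\Lambda(\theta) = \lim_{n\to\infty}\frac{1}{n}\log\|\mcl_\om^{\theta,(n)}\mu_\om^\theta\|_{1,1}
= \lim_{n\to\infty}\frac{1}{n}\sum_{j=0}^{n-1}\log|\lambda_{\sigma^j\om}^\theta|
= \hat\Lambda(\theta),
\]
where the middle equality uses the iterated equivariance $\mcl_\om^{\theta,(n)}\mu_\om^\theta=\bigl(\prod_{j=0}^{n-1}\lambda_{\sigma^j\om}^\theta\bigr)\mu_{\sigma^n\om}^\theta$ together with the uniform essential bound on $\|\mu_\om^\theta\|_{1,1}$. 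Differentiability of $\Lambda$ near $0$ and the identity $D_i\Lambda(0)=D_i\hat\Lambda(0)=0$ then follow at once from parts~(2) and~(3) of the preceding lemma.

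The main obstacle will be the rigorous justification of the one-dimensionality of $Y^\theta(\om)$ under the complex analytic twist: one needs the perturbation-theoretic stability of the top Oseledets block for the cocycle $\mcl^\theta$, uniform in $\om$, for $\theta$ in a complex neighborhood of the origin. The underlying machinery is precisely that developed in~\cite{DFGTV1, DFGTV2}, and the key thing to check is that the regularity estimates and the spectral gap inherited from the Lasota-Yorke inequality survive uniformly in $\om$ for such complex perturbations; once this is granted, the remaining steps are essentially direct translations of the arguments from the piecewise-expanding case.
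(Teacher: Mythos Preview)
Your proposal is correct and follows essentially the same approach as the paper: both invoke the Lasota--Yorke inequalities for the twisted cocycle together with the arguments of~\cite[Theorem~3.12]{DFGTV1} for quasi-compactness and one-dimensionality, and then deduce $\Lambda=\hat\Lambda$ near $0$ (as in~\cite[Corollary~3.14]{DFGTV1}) to obtain differentiability and $D_i\Lambda(0)=0$ from the preceding lemma. The paper's proof is terser---it simply cites~\cite[Theorem~3.12, Corollary~3.14]{DFGTV1} and the Lasota--Yorke inequalities of~\cite[Lemma~3]{DH}---while you spell out more of the underlying logic, but the strategy is the same.
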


\begin{proof}
The quasi-compactness of $\mathcal L^\theta$ for $\theta$ close to $0$, as well as one dimensionality of the associated top Oseledets space, can be obtained by repeating the arguments in the proof of~\cite[Theorem 3.12]{DFGTV1} (which require the Lasota-Yorke inequalities obtained in~\cite[Lemma 3]{DH}).
Furthermore, the same argument as in the proof of~\cite[Corollary 3.14]{DFGTV1} implies that $\Lambda$ and $\hat{\Lambda}$ coincide on a neighborhood of $0$, which gives the second statement of the lemma. 
\end{proof}
By~\cite[Proposition 2.]{DH}, we have that there exists a positive semi-definite $d\times d$  matrix $\Sigma^2$  such that for $\mathbb P$-a.e. $\om \in \Om$, \eqref{VAR} holds. Furthermore, the elements of $\Sigma^2$ are given by~\eqref{varelem}.

The following is a version of Lemma~\ref{lem:Lam''0} in the present context. 
\begin{lemma}
We have that $\Lambda$ is of class $C^2$ on a neighborhood of $0$ and  $D^2 \Lam(0)=\Sig^2$, where $D^2\Lam (0)$ denotes the Hessian of $\Lambda$ in $0$.
\end{lemma}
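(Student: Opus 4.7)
The plan is to follow the structure of the proof of Lemma~\ref{lem:Lam''0}, translating each step from the $L^1$-density setting to the anisotropic-distribution setting of~\cite{DFGTV2}, where integration against $v_\omega^0$ (resp.\ $v_\omega^\theta$) is replaced by pairing of the distribution $\mu_\omega$ (resp.\ $\mu_\omega^\theta$) with test functions via the $C^r\times\mathcal{B}\to\mathcal{B}$ module structure, and the identity $\int\mcl_\omega h\,dm=\int h\,dm$ is replaced by the duality $(\mcl_\omega h)(\varphi)=h(\varphi\circ T_\omega)$ applied to $\varphi=1$. By the preceding lemma, $\Lambda=\hat\Lambda$ on a neighborhood of $0$, so it suffices to work with $\hat\Lambda$. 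Since $O(\theta)$ is $\mathcal{S}$-valued analytic, $\theta\mapsto\mcl_\omega^\theta$ is analytic with uniform operator-norm bounds in $\omega$, and $\lambda_\omega^\theta=\mu_\omega^\theta(e^{\theta\cdot g(\omega,\cdot)})$ equals $1$ at $\theta=0$, one gets a uniform analytic branch of $\log\lambda_\omega^\theta$ on a deterministic ball; the Cauchy integral formula then provides uniform-in-$\omega$ bounds on the first three derivatives, so differentiation under the $\mathbb{P}$-integral yields $\hat\Lambda\in C^2$ with
\[
D_{ij}\hat\Lambda(\theta)=\Re\int_\Omega\Bigl(\frac{D_{ij}\lambda_\omega^\theta}{\lambda_\omega^\theta}-\frac{D_i\lambda_\omega^\theta\,D_j\lambda_\omega^\theta}{(\lambda_\omega^\theta)^2}\Bigr)d\mathbb{P}(\omega).
\]

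Next I would differentiate $\lambda_\omega^\theta=(\mu_\omega+O(\theta)(\omega))(e^{\theta\cdot g(\omega,\cdot)})$ twice. The centering $\mu_\omega(g^i(\omega,\cdot))=0$ together with the crucial fact that $D_iO(0),\,D_{ij}O(0)\in\mathcal{S}$ (so each evaluates to $0$ on the constant test function $1$) gives $D_i\lambda_\omega^0=0$ and
\[
D_{ij}\lambda_\omega^0=\mu_\omega(g^ig^j)+(D_iO(0))(\omega)(g^j(\omega,\cdot))+(D_jO(0))(\omega)(g^i(\omega,\cdot)),
\]
so that $D_{ij}\Lambda(0)$ is the $\mathbb{P}$-integral of the right-hand side.

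The last step identifies the two cross terms with the correlation sums appearing in~\eqref{varelem}. From the implicit function theorem, $D_iO(0)=-(D_{d+1}F(0,0))^{-1}D_iF(0,0)$. A direct computation gives $D_iF(0,0)(\omega)=\mcl_{\sigma^{-1}\omega}\bigl(g^i(\sigma^{-1}\omega,\cdot)\cdot\mu_{\sigma^{-1}\omega}\bigr)$, and the exponential decay $\|\mcl_\omega^{(n)}h\|_{1,1}\le De^{-\lambda n}\|h\|_{1,1}$ on $\{h\in\mathcal{B}:h(1)=0\}$ recalled in the proof of the previous lemma yields the Neumann inverse
\[
(D_{d+1}F(0,0))^{-1}\mathcal{X}(\omega)=-\sum_{n\ge 0}\mcl^{(n)}_{\sigma^{-n}\omega}\mathcal{X}(\sigma^{-n}\omega).
\]
Telescoping produces
\[
(D_iO(0))(\omega)=\sum_{n=1}^\infty\mcl^{(n)}_{\sigma^{-n}\omega}\bigl(g^i(\sigma^{-n}\omega,\cdot)\cdot\mu_{\sigma^{-n}\omega}\bigr).
\]
Pairing with $g^j(\omega,\cdot)$ via $(\mcl^{(n)}_{\omega'}h)(\varphi)=h(\varphi\circ T^{(n)}_{\omega'})$ and $(g\cdot h)(\varphi)=h(g\varphi)$, then shifting by $\sigma^n$ (which preserves $\mathbb{P}$), one obtains
\[
\int_\Omega(D_iO(0))(\omega)(g^j(\omega,\cdot))\,d\mathbb{P}(\omega)=\sum_{n=1}^\infty\int_{\Omega\times X}g^j(\omega,x)g^i(\tau^n(\omega,x))\,d\mu(\omega,x),
\]
and symmetrically with $i\leftrightarrow j$; combining with the $\mu_\omega(g^ig^j)$ term reconstructs $\Sigma^2_{ij}$.

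The main obstacle is not any single computation but the bookkeeping required to re-interpret the expanding proof in the distributional setting: several identities that were natural for $L^1$-densities in Lemma~\ref{lem:Lam''0} (the product rule $\mcl_\omega(\varphi\circ T_\omega\cdot h)=\varphi\cdot\mcl_\omega h$, Fubini between $\mathbb{P}$ and $m$, and ``integration of $D_{ij}O(0)$ against $1$'') must now be reformulated in terms of the pairing with $C^1$ test functions and the module structure on $\mathcal{B}$. The most delicate point is to verify that the Neumann series defining $(D_iO(0))(\omega)$ converges in $\mathcal{B}$ (this is precisely what the decay on $\{h(1)=0\}$ provides) and that its pairing with the $C^r$ function $g^j(\omega,\cdot)$ may be exchanged with the $\mathbb{P}$-integral by dominated convergence, which follows from the uniform-in-$\omega$ $\mathcal{B}$-bounds on the summands.
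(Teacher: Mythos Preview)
Your proposal is correct and follows essentially the same approach as the paper: the paper's proof explicitly says it is ``completely analogous'' to Lemma~\ref{lem:Lam''0}, records the distributional formulas for $D_i\lambda_\omega^\theta$ and $D_{ij}\lambda_\omega^\theta$, notes that $D_iO(0),D_{ij}O(0)\in\mathcal S$ kill the pairings with $1$, and finishes via the series formula $D_iO(0)(\omega)=\sum_{n\ge 1}\mathcal L_{\sigma^{-n}\omega}^{(n)}(g^i(\sigma^{-n}\omega,\cdot)\cdot\mu_{\sigma^{-n}\omega})$ exactly as you do. One tiny slip: your displayed identity for $\int_\Omega (D_iO(0))(\omega)(g^j(\omega,\cdot))\,d\mathbb P$ has $i$ and $j$ swapped in the summand (the correct right-hand side is $\sum_{n\ge 1}\int g^i(\omega,x)g^j(\tau^n(\omega,x))\,d\mu$), but since the two cross terms enter symmetrically this does not affect the final identification with $\Sigma^2_{ij}$.
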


\begin{proof}
The proof is completely analogous to that of Lemma~\ref{lem:Lam''0} and thus we only point out the small adjustments that need to be made. Namely, in the present context we have that 
\[ 
D_i \lam_\om^\theta= \mu_\om^\theta (g^i (\om, \cdot)e^{\theta \cdot g(\om, \cdot)}) +D_i O(\theta) (\om)  (e^{\theta \cdot g(\om, \cdot) }),
\]
and
\[
\begin{split}
D_{ij}\lam_\om^\theta &= \mu_\om^\theta (g^i (\om ,\cdot)g^j (\om, x)e^{\theta \cdot g(\om, \cdot)} ) + D_i O(\theta) (\om)  (g^j (\om, \cdot)e^{\theta \cdot g(\om, \cdot)})  \\
&\phantom{=}+ D_j O(\theta) (\om ) (g^i (\om, \cdot)e^{\theta \cdot g(\om, \cdot)} )+ D_{ij}O(\theta) (\om) (e^{\theta \cdot g(\om, \cdot)}),
\end{split}
\]
for $1\le i, j\le d$.
Due to the centering condition for $g$ and the fact that $D_iO(0)\in \mathcal S$, we have that $D_i \lam_\om^\theta \rvert_{\theta=0}=0$ for $1\le i\le d$. In addition, since $D_{ij}O(0)\in \mathcal S$ we have that 
\[
D_{ij}\lam_\om^\theta \rvert_{\theta=0} =\mu_\om( g^i (\om ,\cdot)g^j (\om, \cdot)) 
+  D_i O(0) (\om)  (g^j (\om, \cdot))+D_j O(0) (\om ) (g^i(\om, \cdot)),
\]
and therefore
\[
\begin{split}
D_{ij} \Lam (0) &= \Re \bigg (\int_{\Omega \times X} g^i (\om ,x)g^j (\om, x)\, d\mu(\om, x) +\int_{\Om}   D_i O(0) (\om)  (g^j (\om, \cdot)) \, d\mathbb P(\om) \\
&\phantom{=}+\int_{\Om}   D_j O(0) (\om)  (g^i (\om, \cdot))\, d\mathbb P(\om) \bigg ),
\end{split}
\]
for $1\le i, j\le d$. The rest of the proof proceeds exactly as the proof of Lemma~\ref{lem:Lam''0}, by taking into account that 
 \[
D_i O(0) (\om)=\sum_{n=1}^\infty \mathcal L_{\sigma^{-n}  \om}^{(n)} (g^i(\sigma^{-n} \om, \cdot) \cdot \mu_{\sigma^{-n} \om}), \quad 1\le i\le d.
\]
\end{proof}
Now the choice for the  bases for top Oseledets spaces $Y_\omega ^\theta$ and $Y_\omega ^{*\theta}$ can be made as in Subsection~\ref{sec:choiceOsBases}.
\subsection{Limit theorems}
In the preceding discussion we have established all preparatory material (analogous to that for piecewise expanding case) for limit theorems in the context of random hyperbolic dynamics. 
The following is a version of Lemma~\ref{L:growthExpSums} in the present context. The proof is again the same as the proof of~\cite[Lemma 4.2.]{DFGTV1} (and relies only on the Oseledets decomposition). We sketch it for readers' convenience.
\begin{lemma}
Let $\theta\in \C^d$ be sufficiently close to 0. Furthermore, 
let $h\in \mathcal B$ be such that  $\phi^\theta_\omega (h) \neq 0$.
Then,
\[
\lim_{n\to\infty}\frac{1}{n} \log \Big| h( e^{\theta \cdot  S_ng(\omega,\cdot )}) \Big| =  \Lam(\theta) \quad \text{for \paeom.}
\]
\end{lemma}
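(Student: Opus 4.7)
The plan is to adapt the proof of Lemma \ref{L:growthExpSums} to the hyperbolic/distributional setting. The two ingredients are (i) an analog of the identity \eqref{intprop}, expressing the quantity $h(e^{\theta\cdot S_n g(\omega,\cdot)})$ in terms of iterates of the twisted transfer operator cocycle, and (ii) the one-dimensionality of the top Oseledets space together with the uniform exponential gap between $\Lambda(\theta)$ and the next Lyapunov exponent of $\mathcal L^\theta$.

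First I would verify the duality identity
\[
\mathcal L_\omega^{\theta,(n)} h \, (1) \;=\; h\bigl(e^{\theta\cdot S_n g(\omega,\cdot)}\bigr),
\]
which follows by induction on $n$ using $\mathcal L_\omega^\theta h(\varphi)=h(e^{\theta\cdot g(\omega,\cdot)}\cdot(\varphi\circ T_\omega))$ together with the cocycle relation $S_{n+1}g(\omega,\cdot)=S_n g(\omega,\cdot)+g(\sigma^n\omega,\cdot)\circ T_\omega^{(n)}$. This is the exact analog, in the distributional framework, of \eqref{intprop} for the piecewise expanding case.

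Next, using the Oseledets splitting $\mathcal B=Y_\omega^\theta\oplus H_\omega^\theta$ and the choice of bases $v_\omega^\theta=\mu_\omega^\theta\in Y_\omega^\theta$ and $\phi_\omega^\theta\in Y_\omega^{*\theta}$ with $\phi_\omega^\theta(v_\omega^\theta)=1$, I would decompose $h=\phi_\omega^\theta(h)\,v_\omega^\theta+h''$, where $h''\in H_\omega^\theta$ (by \eqref{eq:DualityRel}). Then, by equivariance,
\[
\mathcal L_\omega^{\theta,(n)}h \;=\; \phi_\omega^\theta(h)\,\Big(\prod_{j=0}^{n-1}\lambda_{\sigma^j\omega}^\theta\Big)\,v_{\sigma^n\omega}^\theta \;+\; \mathcal L_\omega^{\theta,(n)}h''.
\]
Evaluating at the test function $1$ and using $v_{\sigma^n\omega}^\theta(1)=\mu_{\sigma^n\omega}^\theta(1)=1$, we obtain
\[
h\bigl(e^{\theta\cdot S_n g(\omega,\cdot)}\bigr) \;=\; \phi_\omega^\theta(h)\prod_{j=0}^{n-1}\lambda_{\sigma^j\omega}^\theta \;+\; \bigl(\mathcal L_\omega^{\theta,(n)}h''\bigr)(1).
\]

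Finally, I would take $\frac{1}{n}\log|\cdot|$ of both sides. For the first (main) term, Birkhoff's ergodic theorem and the formula for $\hat\Lambda$ yield $\frac{1}{n}\sum_{j=0}^{n-1}\log|\lambda_{\sigma^j\omega}^\theta|\to\hat\Lambda(\theta)=\Lambda(\theta)$, while $\frac{1}{n}\log|\phi_\omega^\theta(h)|\to0$ since $\phi_\omega^\theta(h)\neq 0$ by hypothesis. For the error term, the action at $1$ is bounded by $\|\mathcal L_\omega^{\theta,(n)}h''\|_{1,1}$ up to a constant, and since $h''$ lies in the equivariant complement of the top Oseledets space, the MET gives
\[
\limsup_{n\to\infty}\frac{1}{n}\log\bigl|(\mathcal L_\omega^{\theta,(n)}h'')(1)\bigr|\;\le\;\lambda_2(\theta)\;<\;\Lambda(\theta),
\]
where $\lambda_2(\theta)$ is the second exceptional exponent of $\mathcal L^\theta$ (or $\kappa(\mathcal R^\theta)$ if there are no further exceptional exponents). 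Combining the two estimates, the main term dominates and produces the claimed limit $\Lambda(\theta)$. The only subtle point in the argument is ensuring that all objects ($v_\omega^\theta$, $\phi_\omega^\theta$, $\lambda_\omega^\theta$, and the splitting itself) depend measurably on $\omega$ and that the exceptional Birkhoff set on which convergence fails has $\mathbb P$-measure zero; both follow from the MET and the results of the previous subsection, so no genuinely new difficulty arises beyond the bookkeeping required by the distributional setting.
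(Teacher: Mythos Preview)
Your proposal is correct and follows essentially the same approach as the paper: both use the duality identity $\mathcal L_\omega^{\theta,(n)}h(1)=h(e^{\theta\cdot S_ng(\omega,\cdot)})$, decompose $h$ along the Oseledets splitting $Y_\omega^\theta\oplus H_\omega^\theta$, and show via the MET that the contribution from $H_\omega^\theta$ has exponential rate strictly below $\Lambda(\theta)$ while the leading term $\phi_\omega^\theta(h)\prod_j\lambda_{\sigma^j\omega}^\theta$ yields the rate $\Lambda(\theta)$. The only cosmetic difference is that you invoke Birkhoff's theorem and $\hat\Lambda=\Lambda$ for the main term, whereas the paper appeals directly to the MET characterization $\Lambda(\theta)=\lim_n\frac{1}{n}\log\|\mathcal L_\omega^{\theta,(n)}\mu_\omega^\theta\|_{1,1}$.
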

\begin{proof}
We use the notation of Subsection~\ref{sec:choiceOsBases} adapted to the present setting. Given $h\in \B$, we write $h=\phi^\theta_\omega (h) \mu^\theta_\omega+h^\theta_\omega$, where $h^\theta_\omega\in H^\theta_\omega$.
Then,
\[
\label{decomp}\mathcal{L}^{\theta,(n)}_\omega h=\left(\prod_{i=0}^{n-1}\lambda_{\sigma^i\omega}^\theta\right) \phi^\theta_\omega (h)  \mu^\theta_{\sigma^{n-1}\omega}+ \mathcal{L}^{\theta,(n)}_\omega h_\omega^\theta.
\]
By the multiplicative ergodic theorem, we have for $\mathbb P$-a.e. $\om \in \Om$ that 
\begin{equation}
\label{decay}
\lim_{n\to\infty}\frac{1}{n}\log\|\mathcal{L}^{\theta,(n)}_{\omega}|_{H_\omega^\theta}\|<\Lambda(\theta).
\end{equation}
Thus, we have that for \paeom\, (since $\phi^\theta_\omega (h) \neq 0$),
\[
\begin{split}
\lim_{n\to\infty}\frac{1}{n} \log \Big| h(  e^{\theta \cdot S_ng(\omega,\cdot)} ) \Big| &= \lim_{n\to\infty}\frac{1}{n}\log \Big|  \mathcal{L}^{\theta,(n)}_\omega h (1) \Big| \\
&= \max  \bigg\{ \lim_{n\to\infty}\frac{1}{n}\sum_{i=0}^{n-1}\log |\lambda_{\sigma^i\omega}^\theta|, \lim_{n\to \infty} \frac 1 n \log | \mathcal{L}^{\theta,(n)}_\omega h_\omega^\theta (1) | \bigg{\}}\\
&=\Lambda(\theta), 
\end{split}
\]
where in the last step we have used~\eqref{decay} and the equality
\[
\Lambda (\theta)=\lim_{n\to \infty} \frac 1 n \lim_{n\to \infty} \lVert \mcl_\om^{\theta, (n)} \mu_\om^\theta \rVert_{1,1}=
\lim_{n\to\infty}\frac{1}{n}\sum_{i=0}^{n-1}\log |\lambda_{\sigma^i\omega}^\theta|.
\]
\end{proof}
The previous lemma readily implies that the version of Theorem~\ref{LDthm} in the present context.   Morever, we have the following version of Theorem~\ref{MDthm}.
\begin{thm}
Let $(a_n)_n$ be a sequence in $\R$ such that $\lim_{n\to\infty}\frac{a_n}{\sqrt n}=\infty$ and 
$\lim_{n\to\infty}\frac{a_n}n=0$. Then
for $\mathbb P$-a.e.  $\omega \in \Omega$ and any $\theta\in\mathbb R^d$,  we have that
\[
\lim_{n\to\infty}\frac{1}{a_n^2/n}\log \mathbb  E[e^{\theta \cdot  S_ng (\om, \cdot)/c_n}]=\frac 1 2\theta^T\Sigma^2\theta,
\]
where $c_n=n/a_n$.
Consequently, when $\Sig^2$ is positive definite we have that:
\begin{enumerate}
\item[(i)] for any closed set $A\subset\mathbb R^d$,
\[
\limsup_{n\to\infty}\frac{1}{a_n^2/n}\log  \mu_\omega(\{S_n g(\om,\cdot)/a_n\in A\})\leq -\frac 1 2 \inf_{x\in A}x^T\Sigma^{-2} x;
\]
\item[(ii)] for any open set $A\subset\mathbb R^d$ we have
\[
\liminf_{n\to\infty}\frac{1}{a_n^2/n}\log  \mu_\omega(\{S_n g(\om,\cdot)/a_n\in A\})\geq-\frac 1 2 \inf_{x\in A}x^T\Sigma^{-2} x,
\]
where $\Sig^{-2}$ denotes the inverse of $\Sig^2$.
\end{enumerate}
\end{thm}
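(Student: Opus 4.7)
The plan is to mirror the proof of Theorem~\ref{MDthm}, replacing the density-based identities by their distributional counterparts in the anisotropic space $\mathcal B^{1,1}$. First I would construct an analytic branch $\Pi_\omega(\theta)$ of $\log\lambda_\omega^\theta$ on a deterministic complex neighborhood of $0$ with $\Pi_\omega(0)=0$ and $|\Pi_\omega(\theta)|\le c$, which is possible since $\lambda_\omega^0=1$ and $\theta\mapsto\lambda_\omega^\theta$ is analytic and uniformly bounded in $\omega$ near $0$. Setting $\Pi_{\omega,n}(\theta)=\sum_{j=0}^{n-1}\Pi_{\sigma^j\omega}(\theta)$, the vanishing $\nabla\Pi_\omega(0)=0$ follows from $D_i\hat\Lambda(0)=0$ combined with the cocycle relation $\mcl_\omega^\theta\mu_\omega^\theta=\lambda_\omega^\theta\mu_{\sigma\omega}^\theta$, so $\nabla\Pi_{\omega,n}(0)=0$.

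Next I would establish the hyperbolic analog of (\ref{Basic relation}): decomposing $\mu_\omega=\phi_\omega^\theta(\mu_\omega)\,\mu_\omega^\theta+\bigl(\mu_\omega-\phi_\omega^\theta(\mu_\omega)\mu_\omega^\theta\bigr)$, applying $\mcl_\omega^{\theta,(n)}$, and pairing with the test function $1\in C^r(X)$ yields
\[
\mu_\omega\bigl(e^{\theta\cdot S_ng(\omega,\cdot)}\bigr)=\phi_\omega^\theta(\mu_\omega)\,e^{\Pi_{\omega,n}(\theta)}+\bigl(\mcl_\omega^{\theta,(n)}(\mu_\omega-\phi_\omega^\theta(\mu_\omega)\mu_\omega^\theta)\bigr)(1).
\]
The hyperbolic version of Lemma~\ref{lem:UnifExpDecayY2}, whose proof requires only the quasi-compactness of the twisted cocycle (established above) and the analyticity of the relevant objects in $\theta$ combined with the Cauchy integral formula, shows that the last term is $O(r^n|\theta|)$ for some $r\in(0,1)$, uniformly in $\omega$ and in small $\theta$. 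In parallel, the Cauchy integral formula applied to the analytic family $\theta\mapsto\Pi_{\omega,n}(\theta)$ (uniformly bounded in $\omega$ on a fixed neighborhood of $0$) gives $\bigl|D^2\Pi_{\omega,n}(0)-\mathrm{Cov}_{\mu_\omega}(S_ng(\omega,\cdot))\bigr|\le C$, with $C$ independent of $\omega$ and $n$, exactly as in (\ref{D2 Pi}).

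With these ingredients I would set $\theta_n=\theta/c_n$ and Taylor expand
\[
\Pi_{\omega,n}(\theta_n)=\tfrac12\theta_n^T D^2\Pi_{\omega,n}(0)\theta_n+\mathcal O(n|\theta_n|^3),
\]
where the absence of a linear term uses $\nabla\Pi_{\omega,n}(0)=0$ and the cubic remainder is uniform in $\omega$ by the Cauchy estimate. Multiplying by $n/a_n^2$ and exploiting the identity $a_n^2c_n^2=n^2$ together with the hypotheses $a_n/\sqrt n\to\infty$ and $a_n/n\to0$, both error terms vanish; Birkhoff's theorem applied to $D^2\Pi_{\omega,n}(0)/n$ (controlled via the uniform Cauchy bound and \eqref{VAR}) produces the limit
\[
\lim_{n\to\infty}\frac{n}{a_n^2}\Pi_{\omega,n}(\theta_n)=\tfrac12\theta^T\Sigma^2\theta,\qquad \paeom.
\]
The prefactor $\phi_\omega^{\theta_n}(\mu_\omega)$ contributes negligibly since it is bounded and tends to $1$, and the error $O(r^n)$ is exponentially negligible after taking $(n/a_n^2)\log$, yielding the desired logarithmic moment generating function limit. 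The large deviation upper and lower bounds in (i) and (ii) then follow from the G\"artner--Ellis theorem \cite[Theorem 2.3.6.]{DemZet}, using the Legendre dual of $\tfrac12\theta^T\Sigma^2\theta$, namely $\tfrac12 x^T\Sigma^{-2}x$, which is finite and essentially smooth when $\Sigma^2$ is positive definite.

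The main obstacle will be the verification of the hyperbolic analog of Lemma~\ref{lem:UnifExpDecayY2} with constants uniform in $\omega$ and analytic dependence in $\theta$. Since elements of $\mathcal B^{1,1}$ are distributions of order $1$ rather than integrable functions, one cannot invoke pointwise lower bounds on densities; instead, exponential contraction of $\mcl_\omega^{\theta,(n)}$ restricted to the hyperplane $H_\omega^\theta$ must be extracted from the multiplicative ergodic theorem applied to the twisted cocycle, combined with a perturbative argument promoting quasi-compactness at $\theta=0$ to uniform exponential decay for $\theta$ in a small complex neighborhood. Once that technical lemma is in place, the remaining computations are a direct transcription of those in the proof of Theorem~\ref{MDthm}.
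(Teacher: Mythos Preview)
Your proposal is correct and follows essentially the same approach as the paper: the paper's proof consists of a single sentence saying that one proceeds exactly as in the proof of Theorem~\ref{MDthm} after replacing~\eqref{Basic relation} by its distributional analogue
\[
\int_X e^{\theta\cdot S_ng(\omega,\cdot)}\,d\mu_\omega=\mcl_\omega^{\theta,(n)}\mu_\omega(1)=\phi_\omega^\theta(\mu_\omega)e^{\Pi_{\omega,n}(\theta)}+\mcl_\omega^{\theta,(n)}\bigl(\mu_\omega-\phi_\omega^\theta(\mu_\omega)\mu_\omega^\theta\bigr)(1),
\]
which is precisely the decomposition you write down. Your identification of the hyperbolic analogue of Lemma~\ref{lem:UnifExpDecayY2} as the only nontrivial point is accurate; the paper does not spell this out in Section~5 but subsumes it under the blanket statement that the remaining arguments are ``almost identical'' to the expanding case, relying on the uniform Lasota--Yorke and quasi-compactness machinery already set up for the anisotropic spaces in~\cite{DFGTV2,DH}.
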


\begin{proof}
The proof proceeds exactly as the proof of Theorem~\ref{MDthm} by replacing~\eqref{Basic relation} with 
\[
\int_X e^{\theta \cdot S_ng(\omega,\cdot)}d\mu_\omega= \mcl_\om^{\theta, (n)} \mu_\om (1)=
\phi_\omega^{\theta}(\mu_\om)e^{\Pi_{\omega,n}(\theta)}+ \mathcal L^{\theta,(n)}_\omega(\mu_\omega-\phi_\omega^{\theta}(\mu_\om)\mu_\omega^\theta)(1).
\]\end{proof}

One can now establish the Berry-Esseen theorem, Edgeworth expansions, local CLT and large and moderate deviations  exactly as in the case of random piecewise expanding dynamics with almost identical proofs. We remark that Lemma \ref{Per0} holds true for general cocycles $\cL_\om^{it}$ acting on a Banach space (see ~\cite[Lemma 2.10.4]{HK}). We also note that (\ref{Bound J}) holds true in our case without any additional assumptions. Indeed, this follows exactly as in the scalar case \cite[Lemma 9.3]{DFGTV2} (see the arguments in the proof of \cite[Lemma 4]{DH}).

Regarding the exponential concentration inequalities, in the present setting we are currently not able to obtain the version of Proposition~\ref{ConcenProp}. The reason is that the proof of Proposition~\ref{ConcenProp} relies 
on the martingale approach. Currently there exists only one paper (namely~\cite{DMN}) that explores the martingale method in the context of anisotropic Banach spaces adapted to hyperbolic dynamics. However, it is restricted to the case of deterministic dynamics and it is not clear if the techniques can be extended to the case of
random dynamics. The other limit theorem  which we can not obtain for random Anosov maps is the large deviations type expansions (Theorem \ref{LD EX}). The issue here is that, in contrary to the case of expanding maps, it is not clear to us when the additional assumption (\ref{LY}) holds true.

\begin{rmk}
We emphasize that it was convenient for us to use the class of anisotropic Banach spaces introduced in~\cite{GL}, since we could refer to the previous work in~\cite{DFGTV2, DH}. In principle, one could use any class of separable (in the  non-separable case, we would need to restrict to the first alternative in (C0))  anisotropic Banach spaces which are stable under small perturbations: 
the anisotropic Banach spaces associated to two Anosov diffeomorphisms $T$ and $T'$ coincide if $T$ and $T'$ are sufficiently close.  We refer to~\cite{Baladi} for an excellent survey on anisotropic Banach spaces for hyperbolic dynamics, and to~\cite{BL} for yet another interesting class of spaces recently introduced.
\end{rmk}
\section{Acknowledgements}
We would like to express our gratitude to the anonymous referee for hers/his  constructive comments that helped us to improve our paper.
D.D. was supported in part by Croatian Science Foundation under the project
IP-2019-04-1239 and by the University of Rijeka under the projects uniri-prirod-18-9 and uniri-pr-prirod19-16.

\section{Appendix}
We define $G \colon B_{\mathbb C^d}(0, 1) \times \mathcal S \to \mathcal S'$ and $H \colon B_{\mathbb C^d} (0, 1) \times \mc{S} \to L^\infty (\Om)$ by
 \begin{equation}\label{GH}
  G(\theta, \mc{W})_\om=\mathcal L_{\sigma^{-1} \om}^{\theta}(\mc{W}_{\sigma^{-1} \om}+v_{\sigma^{-1} \om}^0) \quad \text{and} \quad  H(\theta, \mc{W})(\om)=\int \mathcal L_{\sigma^{-1} \om}^{\theta}(\mc{W}_{\sigma^{-1} \om}+v_{\sigma^{-1} \om}^0) \, dm.
 \end{equation}
  Writing $\theta=(\theta_1, \ldots, \theta_d)$, by $D_iG$ we will denote the partial derivative of $G$ with respect to $\theta_i$ for $1\le i\le d$. Furthermore, $D_{d+1}G$ will denote the partial derivative of $G$ with respect to $\mc{W}$. Analogous notation will be used when $G$ is replaced with $H$.
\begin{lemma}\label{l1}
For $(\theta, \mc{W})\in B_{\mathbb C^d}(0, 1) \times \mathcal S$, 
we have that
\[
(D_{d+1}G(\theta, \mc{W})\mc{H})_\om=\mathcal L_{\sigma^{-1}\om}^\theta \mc{H}_{\sigma^{-1} \om}, \quad \text{for $\mc{H}\in \mc{S}$ and $\om \in \Om$.}
\]
Furthermore, for $(\theta, \mc{W})\in  B_{\mathbb C^d} (0, 1) \times \mc{S}$, we have that 
\[
(D_{d+1}H(\theta, \mc{W})\mc{H})(\om)=\int \mathcal L_{\sigma^{-1}\om}^\theta \mc{H}_{\sigma^{-1} \om}\, dm, \quad \text{for $\mc{H}\in \mc{S}$ and $\om \in \Om$.}
\]

\end{lemma}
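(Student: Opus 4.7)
The plan is to exploit the fact that, for a fixed $\theta$, both $G$ and $H$ are \emph{affine} in $\mathcal{W}$. Indeed, by the linearity of the twisted transfer operator $\mathcal{L}_{\sigma^{-1}\omega}^\theta$, we may split
\[
G(\theta,\mathcal{W})_\omega = \mathcal{L}_{\sigma^{-1}\omega}^\theta \mathcal{W}_{\sigma^{-1}\omega} + \mathcal{L}_{\sigma^{-1}\omega}^\theta v_{\sigma^{-1}\omega}^0,
\]
where the second term does not depend on $\mathcal{W}$. An analogous split holds for $H$ after integrating against $m$. Consequently, the natural candidate for $D_{d+1}G(\theta,\mathcal{W})$ is the linear operator $T_\theta\colon \mathcal{S}\to \mathcal{S}'$ defined by $(T_\theta\mathcal{H})_\omega := \mathcal{L}_{\sigma^{-1}\omega}^\theta \mathcal{H}_{\sigma^{-1}\omega}$, and similarly for $H$.

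First I would verify that $T_\theta$ is a bounded linear operator between the appropriate spaces. Linearity is immediate from the linearity of each $\mathcal{L}_{\sigma^{-1}\omega}^\theta$. For boundedness, Lemma~\ref{l49} gives
\[
\lVert (T_\theta\mathcal{H})_\omega\rVert_{\mathcal{B}} = \lVert \mathcal{L}_{\sigma^{-1}\omega}^\theta \mathcal{H}_{\sigma^{-1}\omega}\rVert_{\mathcal{B}} \le K(\theta)\lVert \mathcal{H}_{\sigma^{-1}\omega}\rVert_{\mathcal{B}},
\]
so taking essential suprema over $\omega$ yields $\lVert T_\theta \mathcal{H}\rVert_\infty \le K(\theta)\lVert \mathcal{H}\rVert_\infty$. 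Moreover $T_\theta \mathcal{H}$ is measurable in $\omega$ by condition~\ref{cond:METCond} (the $\mathbb{P}$-continuity of the cocycle), so $T_\theta\mathcal{H}\in \mathcal{S}'$. For the $H$-component the same estimate combined with (V3) shows that $\omega \mapsto \int (T_\theta \mathcal{H})_\omega\, dm$ lies in $L^\infty(\Omega)$.

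Next I would compute the increment directly. For $\mathcal{H}\in \mathcal{S}$ we have
\[
G(\theta, \mathcal{W}+\mathcal{H})_\omega - G(\theta, \mathcal{W})_\omega = \mathcal{L}_{\sigma^{-1}\omega}^\theta(\mathcal{W}_{\sigma^{-1}\omega}+\mathcal{H}_{\sigma^{-1}\omega}+v_{\sigma^{-1}\omega}^0) - \mathcal{L}_{\sigma^{-1}\omega}^\theta(\mathcal{W}_{\sigma^{-1}\omega}+v_{\sigma^{-1}\omega}^0),
\]
which by linearity equals $\mathcal{L}_{\sigma^{-1}\omega}^\theta \mathcal{H}_{\sigma^{-1}\omega} = (T_\theta \mathcal{H})_\omega$. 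Hence
\[
G(\theta, \mathcal{W}+\mathcal{H}) - G(\theta, \mathcal{W}) - T_\theta \mathcal{H} = 0 \quad \text{in } \mathcal{S}',
\]
so the definition of the Fréchet derivative is trivially satisfied (with zero remainder), giving $D_{d+1}G(\theta,\mathcal{W}) = T_\theta$. The identical argument applied to $H$, using the linearity of the integral, yields the second claim. There is really no substantive obstacle here; the only point that requires attention is checking that $T_\theta$ maps into the correct spaces, which is handled by Lemma~\ref{l49} and condition~\ref{cond:METCond}.
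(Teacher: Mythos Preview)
Your argument is correct and follows exactly the paper's approach: the paper's proof consists of the single sentence ``The desired formulas follow directly from the simple observation that $G$ and $H$ are affine in $\mc{W}$,'' and you have simply spelled out what that observation entails, including the (useful but not strictly required here) verification that $T_\theta$ is bounded between the appropriate spaces.
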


\begin{proof}
The desired formulas follow directly from the simple observation that $G$ and $H$ are affine in $\mc{W}$.
\end{proof}

The proof of the following result is similar to the proof of~\cite[Lemma B.6.]{DFGTV1}.
\begin{lemma}
For $(\theta, \mc{W})\in B_{\mathbb C^d}(0, 1) \times \mathcal S$ and $1\le i\le d$, we have that 
\begin{equation}\label{fd}
(D_i G(\theta, \mc{W}))_\om=\mathcal L_{\sigma^{-1}\om} \left(g^i(\sigma^{-1}\om, \cdot)e^{\theta \cdot g(\sigma^{-1}\om, \cdot)}(\mc{W}_{\sigma^{-1} \om}+v_{\sigma^{-1} \om}^0)\right),
\end{equation}
for  $\om \in \Om$. 
\end{lemma}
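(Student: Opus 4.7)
The approach is to reduce the statement to a $\B$-valued differentiation of the exponential multiplier $e^{\theta \cdot g(\sig^{-1}\om,\cdot)}$, then apply the $\theta$-independent operator $\mcl_{\sig^{-1}\om}$. Writing $f_\om := \mc{W}_{\sig^{-1}\om} + v^0_{\sig^{-1}\om}$, we have $G(\theta,\mc{W})_\om = \mcl_{\sig^{-1}\om}(e^{\theta \cdot g(\sig^{-1}\om,\cdot)}\, f_\om)$, and by~\ref{cond:unifNormBd} the operator $\mcl_{\sig^{-1}\om}$ is bounded on $\B$ with norm at most $K$, uniformly in $\om$. Therefore, once we establish that $\theta \mapsto e^{\theta \cdot g(\sig^{-1}\om,\cdot)} f_\om$ is differentiable from $B_{\bbC^d}(0,1)$ into $\B$ with partial derivative $g^i(\sig^{-1}\om,\cdot)\, e^{\theta \cdot g(\sig^{-1}\om,\cdot)}\, f_\om$, and that this convergence is uniform in $\om$ in the $\B$-norm, we can apply $\mcl_{\sig^{-1}\om}$ to obtain~\eqref{fd} with convergence in the $\mathcal{S}'$-norm.

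The plan is then to carry out the following steps. First, use the pointwise Taylor expansion
\[
e^{(\theta + h e_i)\cdot z} - e^{\theta \cdot z} = h\, z^i\, e^{\theta \cdot z} + r_h(\theta,z) \qquad (z \in \R^d),
\]
with the remainder satisfying $|r_h(\theta,z)| \le |h|^2 |z|^2 e^{(|\theta|+|h|)|z|}$. Specialising to $z = g(\sig^{-1}\om,x)$ and using the uniform bound $\|g\|_{L^\infty} \le M$ from~\eqref{obs}, we get $\|r_h(\theta,g(\sig^{-1}\om,\cdot))\|_{L^\infty} = O(|h|^2)$, uniformly in $\om$. Second, estimate the variation of the remainder: apply (V9) with $h_0(z) := h^{-1} r_h(\theta,z)$, which is $C^1$ on $\overline{B}_{\R^d}(0,M)$ with $\|Dh_0\|_\infty = O(|h|)$, to obtain $\var(h^{-1} r_h(\theta, g(\sig^{-1}\om,\cdot))) = O(|h|)\cdot \var(g(\sig^{-1}\om,\cdot))$, which is uniformly $O(|h|)$ in $\om$ by the second bound in~\eqref{obs}. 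Third, combine with (V8) to control the $\B$-norm of the product with $f_\om$, using $\|f_\om\|_{L^\infty} \le C_{\var} \|f_\om\|_\B$ (by (V3)) and the uniform bounds $\esssup_\om \|v^0_\om\|_\B < \infty$ from Lemma~\ref{lem:boundedv} and $\|\mc{W}\|_\infty < \infty$ since $\mc{W}\in \mc{S}$. This yields that the $\B$-norm of $h^{-1} r_h(\theta,g(\sig^{-1}\om,\cdot))\, f_\om$ is $O(|h|)$ uniformly in $\om$. The identification of the main term $g^i(\sig^{-1}\om,\cdot)\, e^{\theta\cdot g(\sig^{-1}\om,\cdot)}\, f_\om$ as an element of $\mc{S}'$ follows analogously, using (V8), (V9) and Lemma~\ref{lj}. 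Applying $\mcl_{\sig^{-1}\om}$ and invoking~\ref{cond:unifNormBd} finishes the argument.

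The main obstacle is not the pointwise differentiation, which is classical, but the uniformity in $\om$ required by the $\mc{S}'$-norm (an essential supremum over $\om$). This is precisely where the observable regularity hypotheses~\eqref{obs} enter: the combination of $\|g\|_{L^\infty} \le M$ and $\esssup_\om \var(g_\om) < \infty$, together with the uniform bound on $v^0_\om$ in $\B$ from Lemma~\ref{lem:boundedv}, ensure that all the estimates produced by (V3), (V8) and (V9) are controlled by constants depending on $\theta$ and $|h|$ alone, and not on $\om$. With these uniform bounds in hand the remaining analysis is essentially the same computation as in~\cite[Lemma B.6.]{DFGTV1}, the only real difference being that the scalar derivative $\frac{d}{d\theta}e^{\theta g}$ is replaced by the partial $\partial_{\theta_i} e^{\theta \cdot g} = g^i e^{\theta \cdot g}$.
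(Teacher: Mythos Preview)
Your proposal is correct and follows essentially the same approach as the paper: both reduce to controlling the $\B$-norm of $(e^{(\theta+te_i)\cdot g}-e^{\theta\cdot g}-tg^ie^{\theta\cdot g})(\mc W_{\sigma^{-1}\om}+v^0_{\sigma^{-1}\om})$ uniformly in $\om$ via Taylor's theorem for the $L^\infty$ part and (V9) for the variation, then push through the uniformly bounded operator $\mcl_{\sigma^{-1}\om}$. The only cosmetic difference is that the paper first factors $e^{(\theta+te_i)\cdot g}-e^{\theta\cdot g}-tg^ie^{\theta\cdot g}=e^{\theta\cdot g}(e^{tg^i}-1-tg^i)$ and applies (V9) to the scalar function $z\mapsto e^{tz}-1-tz$ with $f=g^i$, whereas you apply (V9) directly to the $\R^d$-valued remainder $h^{-1}r_h(\theta,\cdot)$.
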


\begin{proof}
Let us denote the right hand side in~\eqref{fd} by $(L(\theta, \mc{W}))_\om$. Furthermore, let $\{e_1, \ldots, e_n\}$ be the canonical base of $\C^d$. Observe that 
\begin{align*}
 & (G(\theta +te_i, \mc W)-G(\theta, \mc W)-tL(\theta, \mc W))_\om \displaybreak[0] \\
 &=  \mathcal  L_{\sigma^{-1} \om}((e^{(\theta+te_i)\cdot  g(\sigma^{-1} \om ,\cdot)}-e^{\theta \cdot g(\sigma^{-1} \om ,\cdot)}-tg^i(\sigma^{-1}\om ,\cdot)e^{\theta \cdot g(\sigma^{-1} \om ,\cdot)})(\mc W_{\sigma^{-1} \om}+v^0_{\sigma^{-1} \om})),
 \end{align*}
and therefore
\begin{align*}
 &\lVert (G(\theta +t, \mc W)-G(\theta, \mc W)-tL(\theta, \mc W))_\om\rVert_{\BV} \displaybreak[0] \\
 & \le K\lVert (e^{(\theta+te_i)\cdot g(\sigma^{-1} \om ,\cdot)}-e^{\theta \cdot g(\sigma^{-1} \om ,\cdot)}-tg^i(\sigma^{-1}\om ,\cdot)e^{\theta \cdot g(\sigma^{-1} \om ,\cdot)})(\mc W_{\sigma^{-1} \om}+v^0_{\sigma^{-1} \om})
  \rVert_{\BV} \displaybreak[0] \\
   & =K\var \big((e^{(\theta+te_i)\cdot  g(\sigma^{-1} \om ,\cdot)}-e^{\theta \cdot g(\sigma^{-1} \om ,\cdot)}-tg^i(\sigma^{-1}\om ,\cdot)e^{\theta g(\sigma^{-1} \om ,\cdot)})(\mc W_{\sigma^{-1} \om}+v^0_{\sigma^{-1} \om}) \big)
  \displaybreak[0] \\
  &\phantom{=}   +K \lVert (e^{(\theta+te_i) \cdot g(\sigma^{-1} \om ,\cdot)}-e^{\theta \cdot g(\sigma^{-1} \om ,\cdot)}-tg^i(\sigma^{-1}\om ,\cdot)e^{\theta \cdot  g(\sigma^{-1} \om ,\cdot)})(\mc W_{\sigma^{-1} \om}+v^0_{\sigma^{-1} \om})\rVert_1.
\end{align*}
By applying Taylor's reminder theorem for the map $z\mapsto e^{zg^i (\sigma^{-1}\om, x)}$, we obtain that 
\[
\lVert e^{tg^i (\sigma^{-1}\om, \cdot)}-1-tg^i (\sigma^{-1}\om, \cdot)\rVert_{L^\infty}\le \frac{1}{2}M^2 e^{M}\lvert t\rvert^2,
\]
and thus
\[
\lVert (e^{(\theta+te_i) \cdot g(\sigma^{-1} \om ,\cdot)}-e^{\theta \cdot g(\sigma^{-1} \om ,\cdot)}-tg^i(\sigma^{-1}\om ,\cdot)e^{\theta \cdot  g(\sigma^{-1} \om ,\cdot)} \rVert_{L^\infty}\le \frac{1}{2}M^2 e^{2M}\lvert t\rvert^2.
\]
Moreover, by applying (V9) for $f=g^i (\sigma^{-1}\om, \cdot)$ and 
$
h(z)=e^{tz}-1-tz, 
$
we conclude that for some $C>0$ independent on $\om$ and $t$,
\[
\var (e^{tg^i (\sigma^{-1}\om, \cdot)}-1-tg^i (\sigma^{-1}\om, \cdot))\le C\lvert t\rvert^2.
\]
Hence, 
\[
\var ( (e^{(\theta+te_i) \cdot g(\sigma^{-1} \om ,\cdot)}-e^{\theta \cdot g(\sigma^{-1} \om ,\cdot)}-tg^i(\sigma^{-1}\om ,\cdot)e^{\theta \cdot  g(\sigma^{-1} \om ,\cdot)})\le C'\lvert t\rvert^2, 
\]
for some $C'>0$. Now one can easily conclude that
\[
\frac{1}{\lvert t\rvert}\lVert G(\theta +te_i, \mc W)-G(\theta, \mc W)-tL(\theta, \mc W)\rVert_\infty \to 0 \quad \text{as $t\to 0$,}
\]
which yields~\eqref{fd}.
\end{proof}
The following lemma can be obtained by the same reasoning as the previous one. 
\begin{lemma}\label{l2}
For $(\theta, \mc{W})\in B_{\mathbb C^d}(0, 1) \times \mathcal S$ and $1\le i\le d$, we have that 
\begin{equation}\label{fdc}
(D_i H(\theta, \mc{W})) (\om)= \int g^i(\sigma^{-1}\om, \cdot)e^{\theta \cdot g(\sigma^{-1}\om, \cdot)}(\mc{W}_{\sigma^{-1} \om}+v_{\sigma^{-1} \om}^0)\, dm,
\end{equation}
for $\om \in \Om$.
\end{lemma}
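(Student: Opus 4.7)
The plan is to imitate the proof of the preceding lemma, but work with the simpler target norm $L^\infty(\Omega)$ instead of $\mc{S}'$, exploiting the fact that $H$ is obtained from $G$ by integrating against $m$. Concretely, since the transfer operator satisfies $\int \mcl_\om f \, dm = \int f \, dm$ for every $f \in L^1$, we may first rewrite
\[
H(\theta, \mc{W})(\om) = \int e^{\theta \cdot g(\sigma^{-1}\om, \cdot)}(\mc{W}_{\sigma^{-1}\om} + v^0_{\sigma^{-1}\om}) \, dm,
\]
which absorbs the transfer operator entirely and turns the problem into differentiating an integral that depends on $\theta$ only through the exponential factor.

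Next, I would denote the right-hand side of~\eqref{fdc} by $\tilde L(\theta, \mc{W})(\om)$ and bound, uniformly in $\om$,
\[
\bigl\lVert H(\theta + te_i, \mc{W}) - H(\theta, \mc{W}) - t\tilde L(\theta, \mc{W}) \bigr\rVert_{L^\infty(\Om)}
\]
by
\[
\esssup_{\om \in \Om}\Bigl\lvert \int \bigl(e^{(\theta+te_i)\cdot g(\sigma^{-1}\om, \cdot)} - e^{\theta \cdot g(\sigma^{-1}\om, \cdot)} - t g^i(\sigma^{-1}\om, \cdot) e^{\theta \cdot g(\sigma^{-1}\om, \cdot)}\bigr)(\mc{W}_{\sigma^{-1}\om} + v^0_{\sigma^{-1}\om}) \, dm \Bigr\rvert.
\]
Using the uniform estimate
\[
\lVert e^{tg^i(\sigma^{-1}\om, \cdot)} - 1 - t g^i(\sigma^{-1}\om, \cdot) \rVert_{L^\infty} \le \tfrac{1}{2} M^2 e^M |t|^2
\]
from Taylor's theorem (already invoked in the proof of the previous lemma, where $M$ is the $L^\infty$ bound on $g$ from~\eqref{obs}), combined with $\lVert e^{\theta \cdot g(\sigma^{-1}\om, \cdot)} \rVert_{L^\infty} \le e^{|\theta| M}$ and the uniform $L^1$ bound $\lVert \mc{W}_{\sigma^{-1}\om} + v^0_{\sigma^{-1}\om} \rVert_1 \le \lVert \mc{W} \rVert_\infty + \esssup_{\om} \lVert v^0_\om \rVert_{\B}$ (from~\eqref{eq:boundedv}), this difference quotient is $O(|t|^2)$ uniformly in $\om$, so its $|t|^{-1}$-rescaled version tends to zero as $t \to 0$.

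The argument is considerably simpler than the one for $G$ in the previous lemma because here we need only control an $L^1$-type integral, not a $\B$-norm, so neither property (V8) nor (V9) nor Lemma~\ref{l49} are required, and we can dispense with the variation estimates entirely. There is really no significant obstacle; the only point of care is to verify that all bounds on $t$-expansion of the exponential are uniform in $\om$, which follows because $M$ is a deterministic bound on $g$ by~\eqref{obs}.
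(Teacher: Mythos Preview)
Your proposal is correct and follows essentially the same approach the paper intends: the paper simply states that the lemma ``can be obtained by the same reasoning as the previous one,'' and you have carried out exactly that reasoning, with the accurate observation that the target norm $L^\infty(\Om)$ requires only the $L^1$-part of the argument (so the variation estimates via (V8) and (V9) are unnecessary). Your preliminary use of the duality $\int \mcl_{\sigma^{-1}\om} f \, dm = \int f \, dm$ to absorb the transfer operator is a harmless cosmetic simplification that the paper's statement of $H$ in~\eqref{GH} does not make explicit but which is implicit in~\eqref{intprop}.
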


As a direct consequence of previous lemmas we obtain the following result. 
\begin{proposition}\label{difF}
Let $F(\te,\mc W)$ be defined by (\ref{defF}).
For $(\theta, \mc W)$  in  a neighborhood $(0, 0)\in \mathbb C^d \times \mc{S}$, we have that 
 \[
  (D_{d+1} F(\theta, \mc W) \mc H)_\om=\frac{1}{H(\theta, \mc W)(\om)}\mathcal L_{\sigma^{-1} \om}^\theta \mc H_{\sigma^{-1} \om}-\frac{\int \mathcal L_{\sigma^{-1} \om}^\theta
  \mc H_{\sigma^{-1} \om}\, dm}{[H(\theta, \mc W)(\om)]^2}G(\theta, \mc W)_\om-\mc H_\om,
 \]
for $\om \in \Om$ and $\mc H \in \mathcal S$ and
\[
 \begin{split}
 (D_i F(\theta, \mc W))_\om &=\frac{1}{H(\theta, \mc W)(\om)}\mathcal L_{\sigma^{-1} \om}(g^i(\sigma^{-1} \om, \cdot)e^{\theta  \cdot g(\sigma^{-1} \om, \cdot)} (\mc W_{\sigma^{-1} \om}+
 v_{\sigma^{-1} \om}^0)) \\
 &\phantom{=}-\frac{\int g^i(\sigma^{-1} \om, \cdot)e^{\theta \cdot  g(\sigma^{-1} \om, \cdot)} (\mc W_{\sigma^{-1} \om}+
 v_{\sigma^{-1} \om}^0)\, dm}{[H(\theta, \mc W)(\om)]^2}\mathcal L_{\sigma^{-1} \om}^\theta (\mc W_{\sigma^{-1} \om}+v_{\sigma^{-1} \om}^0),
 \end{split}
\]
for $\om \in \Om$ and $1\le i\le d$. 
\end{proposition}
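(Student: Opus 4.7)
The plan is to apply the quotient rule in a Banach-space setting, since $F(\theta,\mc W)(\om,\cdot)$ is manifestly the ratio $G(\theta,\mc W)_\om / H(\theta,\mc W)(\om)$ minus the $\theta$-independent quantity $\mc W_\om + v_\om^0$. Both $G$ and $H$ are treated in Lemmas~\ref{l1} and~\ref{l2}, where all the partial derivatives we need are computed, so the strategy is simply to assemble these ingredients via the quotient rule. Crucially, Lemma~\ref{lem:FwellDef} ensures that $\essinf_\om |H(\theta,\mc W)(\om)| \ge 1/2$ on some neighborhood of $(0,0)$, so $1/H$ is well-defined and smooth there, and moreover $G/H$ lives in $\mc S'$ with values that we can differentiate in $\mc S'$.

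First I would justify that the map $(\theta,\mc W)\mapsto 1/H(\theta,\mc W)$ is analytic as a map into $L^\infty(\Om)$ on the chosen neighborhood; this is immediate from the analyticity of $H$ (shown in Lemma~\ref{lem:FwellDef}) together with the uniform lower bound on $|H|$, since inversion is analytic away from zero in any Banach algebra. Combined with the analyticity of $G$ as a map into $\mc S'$, the fiberwise product $(G/H)(\om) = H(\om)^{-1} G(\om)$ is analytic with values in $\mc S'$. Subtracting the affine term $\mc W + v^0$ keeps us in $\mc S$ (by the defining condition $F(\theta,\mc W)\in \mc S$ from Lemma~\ref{lem:FwellDef}).

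Next, for the derivative with respect to $\mc W$, the term $-\mc W_\om$ contributes $-\mc H_\om$ directly, while the quotient rule gives
\[
D_{d+1}\!\left(\frac{G}{H}\right)\mc H = \frac{D_{d+1}G\,\mc H}{H} - \frac{G\,\cdot\,D_{d+1}H\,\mc H}{H^2}.
\]
Plugging in the formulas $(D_{d+1}G(\theta,\mc W)\mc H)_\om = \mcl_{\sigma^{-1}\om}^\theta \mc H_{\sigma^{-1}\om}$ and $(D_{d+1}H(\theta,\mc W)\mc H)(\om) = \int \mcl_{\sigma^{-1}\om}^\theta \mc H_{\sigma^{-1}\om}\,dm$ from Lemma~\ref{l1} yields precisely the stated expression for $D_{d+1}F$. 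For the derivatives in the $\theta$-direction the $-\mc W_\om - v_\om^0$ part drops out, and the same quotient-rule identity
\[
D_i\!\left(\frac{G}{H}\right) = \frac{D_iG}{H} - \frac{G\,\cdot\,D_iH}{H^2}
\]
combined with the formulas for $D_iG$ and $D_iH$ from~\eqref{fd} and~\eqref{fdc} produces the stated expression for $D_iF$.

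There is really no hard step here; the only point that requires care is ensuring that every manipulation is performed inside the correct Banach space. In particular, one must check that $D_{d+1}G(\theta,\mc W)\mc H$, viewed as an element of $\mc S'$, can legitimately be multiplied by the $L^\infty(\Om)$-valued function $1/H(\theta,\mc W)$ to produce an element of $\mc S'$; this follows from $\lVert \tfrac{1}{H}\cdot \mathcal V\rVert_\infty \le \lVert 1/H\rVert_{L^\infty(\Om)}\lVert \mathcal V\rVert_\infty$, and analogously for the second summand where the scalar $\int \mcl_{\sigma^{-1}\om}^\theta\mc H_{\sigma^{-1}\om}\,dm$ multiplies the $\mc S'$-valued function $G(\theta,\mc W)$. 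Once these bookkeeping points are in place, the proof reduces to a one-line application of the quotient rule.
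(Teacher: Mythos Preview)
Your proposal is correct and follows exactly the approach the paper takes: the paper's entire proof is the sentence ``As a direct consequence of previous lemmas we obtain the following result,'' and your argument spells out that direct consequence by writing $F=G/H-\mc W-v^0$ and applying the quotient rule together with the derivative formulas from Lemma~\ref{l1}, equation~\eqref{fd}, and Lemma~\ref{l2}. Your added remarks on well-definedness in $\mc S'$ and the lower bound on $|H|$ from Lemma~\ref{lem:FwellDef} are appropriate and make explicit what the paper leaves implicit.
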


\begin{lemma}
We have that $D_{d+1,d+1}G=0$ and $D_{d+1,d+1}H=0$. 
\end{lemma}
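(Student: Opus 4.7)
The plan is to observe that the statement is essentially immediate from the explicit formulas for $G$ and $H$ given in~\eqref{GH}. For fixed $\theta$, both maps $\mc{W}\mapsto G(\theta,\mc{W})$ and $\mc{W}\mapsto H(\theta,\mc{W})$ are \emph{affine} functions of $\mc{W}$: indeed, in each case $\mc{W}$ enters only through the expression $\mc{W}_{\sigma^{-1}\om}+v^0_{\sigma^{-1}\om}$, which is composed with the linear operator $\mathcal{L}^\theta_{\sigma^{-1}\om}$ (and, in the case of $H$, integrated against $m$). Since an affine map between Banach spaces has a constant Fr\'echet derivative, its second derivative with respect to that argument vanishes identically.

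To make this rigorous at the formula level, I would simply invoke Lemma~\ref{l1}, which already computed
\[
(D_{d+1}G(\theta, \mc{W})\mc{H})_\om=\mathcal L_{\sigma^{-1}\om}^\theta \mc{H}_{\sigma^{-1}\om},\qquad (D_{d+1}H(\theta, \mc{W})\mc{H})(\om)=\int \mathcal L_{\sigma^{-1}\om}^\theta \mc{H}_{\sigma^{-1}\om}\, dm,
\]
for every $\mc{H}\in\mc{S}$. The right-hand sides are independent of $\mc{W}$, so the derivative of the map $\mc{W}\mapsto D_{d+1}G(\theta,\mc{W})$ (resp.\ $\mc{W}\mapsto D_{d+1}H(\theta,\mc{W})$), viewed as a map into the Banach space of bounded linear operators $L(\mc{S},\mc{S}')$ (resp.\ $L(\mc{S},L^\infty(\Omega))$), is the zero operator. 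By definition this derivative is $D_{d+1,d+1}G(\theta,\mc{W})$ (resp.\ $D_{d+1,d+1}H(\theta,\mc{W})$), so both vanish.

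There is no real obstacle here; the only thing to be careful about is distinguishing clearly between the linear functional $D_{d+1}G(\theta,\mc{W})\in L(\mc{S},\mc{S}')$ and its value at a test direction $\mc{H}$, so that the claim ``$D_{d+1,d+1}G=0$'' is interpreted as the zero element of $L(\mc{S},L(\mc{S},\mc{S}'))$ (and analogously for $H$). Once this is made explicit, the proof is a one-line consequence of Lemma~\ref{l1}.
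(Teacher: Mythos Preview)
Your proposal is correct and matches the paper's own proof, which is the one-line observation that the conclusion follows directly from Lemma~\ref{l1}. Your additional remark that $G$ and $H$ are affine in $\mc{W}$ is exactly the content behind that lemma, so there is no substantive difference.
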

\begin{proof}
The desired conclusion follows directly from Lemma~\ref{l1}.
\end{proof}
The proof of the following lemma can be obtain by repeating the  arguments from~\cite[Appendix B.2]{DFGTV1}.
\begin{lemma}\label{laux}
For $(\theta, \mc{W})\in B_{\mathbb C^d}(0, 1) \times \mathcal S$ and $i, j\in \{1, \ldots, d\}$, we have that 
\[
(D_{ji} G(\theta, \mc{W}))_\om=\mathcal L_{\sigma^{-1}\om} (g^i(\sigma^{-1}\om, \cdot) g^j (\sigma^{-1}\om, \cdot) e^{\theta \cdot g(\sigma^{-1}\om, \cdot)}(\mc{W}_{\sigma^{-1} \om}+v_{\sigma^{-1} \om}^0)),
\]
and
\[
(D_{ji} H(\theta, \mc{W})) (\om)= \int g^i(\sigma^{-1}\om, \cdot) g^j (\sigma^{-1}\om, \cdot) e^{\theta \cdot g(\sigma^{-1}\om, \cdot)}(\mc{W}_{\sigma^{-1} \om}+v_{\sigma^{-1} \om}^0)\, dm,
\]
for $\om \in \Om$. Moreover,  for $j\in \{1, \ldots, d\}$ we have that 
\[
(D_{j,d+1}G(\theta, \mc{W})\mc{H})_\om=\mathcal L_{\sigma^{-1}\om} (g^j(\sigma^{-1}\om, \cdot) e^{\theta \cdot g(\sigma^{-1}\om, \cdot)} \mc{H}_{\sigma^{-1} \om}) \quad \text{for $\mc{H}\in \mc{S}$ and $\om \in \Om$,}
\]
and
\[
(D_{j,d+1}H(\theta, \mc{W})\mc{H}) (\om)=\int g^j(\sigma^{-1}\om, \cdot) e^{\theta \cdot g(\sigma^{-1}\om, \cdot)} \mc{H}_{\sigma^{-1} \om}\, dm\quad \text{for $\mc{H}\in \mc{S}$ and $\om \in \Om$,}
\]
Finally,  for $i\in \{1, \ldots, d\}$ we have that 
\[
(D_{d+1,i} G(\theta, \mc{W}) \mathcal H)_\om=\mathcal L_{\sigma^{-1}\om} (g^i(\sigma^{-1}\om, \cdot)e^{\theta \cdot g(\sigma^{-1}\om, \cdot)}\mc{H}_{\sigma^{-1} \om}) \quad \text{for $\mc{H}\in \mc{S}$ and $\om \in \Om$,}
\]
and
\[
(D_{d+1,i} H(\theta, \mc{W}) \mathcal H) (\om)=\int g^i(\sigma^{-1}\om, \cdot)e^{\theta \cdot g(\sigma^{-1}\om, \cdot)}\mc{H}_{\sigma^{-1} \om}\, dm \quad \text{for $\mc{H}\in \mc{S}$ and $\om \in \Om$.}
\]
\end{lemma}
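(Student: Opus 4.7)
The lemma collects second-order partial derivatives of the auxiliary maps $G$ and $H$ from the appendix. My plan is to obtain every formula by differentiating the first-order expressions already on the table: Lemma \ref{l1} for $D_{d+1}G$ and $D_{d+1}H$, and \eqref{fd} together with Lemma \ref{l2} for $D_iG$ and $D_iH$. Throughout I would work in the Banach spaces $\mc S'$ and $L^\infty(\Om)$ respectively, so that ``derivative'' means Fr\'echet derivative with respect to the $\|\cdot\|_\infty$ norm.

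For the pure second derivatives $D_{ji}G$ and $D_{ji}H$, I would differentiate \eqref{fd} in $\theta_j$. Writing
\begin{equation*}
\bigl(D_i G(\theta+te_j, \mc W) - D_i G(\theta, \mc W)\bigr)_\om = \mcl_{\sigma^{-1}\om}\!\Bigl(g^i(\sigma^{-1}\om,\cdot)\bigl(e^{(\theta+te_j)\cdot g(\sigma^{-1}\om,\cdot)} - e^{\theta\cdot g(\sigma^{-1}\om,\cdot)}\bigr)(\mc W_{\sigma^{-1}\om}+v^0_{\sigma^{-1}\om})\Bigr),
\end{equation*}
I subtract $t$ times the proposed value of $(D_{ji}G(\theta,\mc W))_\om$, factor out $e^{\theta\cdot g(\sigma^{-1}\om,\cdot)}$, and apply Taylor's remainder theorem to the entire function $z\mapsto e^{zg^j(\sigma^{-1}\om,x)}$ exactly as in the unnamed lemma immediately preceding Lemma \ref{l2}. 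This produces the pointwise bound $\|e^{tg^j(\sigma^{-1}\om,\cdot)}-1-tg^j(\sigma^{-1}\om,\cdot)\|_{L^\infty}\le \tfrac12 M^2 e^{M}|t|^2$, and applying (V9) to $h(z)=e^{tz}-1-tz$ on $\overline B_{\R^d}(0,M)$ gives a variation bound of the same order. Combining these via (V8), (C2), the uniform bound $\esssup_\om\|v^0_\om\|_\B<\infty$ from \eqref{eq:boundedv} and $\|\mc W\|_\infty<\infty$ shows the remainder is $o(|t|)$ in $\|\cdot\|_\infty$, yielding the stated formula for $D_{ji}G$. The formula for $D_{ji}H$ follows by integrating against $dm$ and using (V3) to control the $L^\infty$ norm in terms of $\|\cdot\|_\B$.

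For the mixed derivatives I would exploit the affine structure of $G$ and $H$ in the $\mc W$ slot, already noted in the proof of Lemma \ref{l1}. Two routes give the formulas for $D_{j,d+1}$ and $D_{d+1,i}$. First, since $(D_{d+1}G(\theta,\mc W)\mc H)_\om=\mcl^\theta_{\sigma^{-1}\om}\mc H_{\sigma^{-1}\om}$ is \emph{independent} of $\mc W$, differentiating this identity in $\theta_j$ with $\mc H$ held fixed is exactly the computation behind \eqref{fd}, but with $\mc H_{\sigma^{-1}\om}$ in place of $\mc W_{\sigma^{-1}\om}+v^0_{\sigma^{-1}\om}$; the Taylor estimate from the previous paragraph yields the claimed $(D_{j,d+1}G(\theta,\mc W)\mc H)_\om$. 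Second, the formula \eqref{fd} for $D_iG(\theta,\mc W)$ is itself affine in $\mc W$, so differentiating in the $\mc W$ direction reads off $(D_{d+1,i}G(\theta,\mc W)\mc H)_\om$ by inspection. Equality of the mixed partials is automatic since $G$ is jointly analytic on $B_{\C^d}(0,1)\times \mc S$, as was used in Lemma \ref{lem:FwellDef}. The $H$ identities follow in each case by integration against $dm$.

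The argument involves no new ideas beyond those of the preceding appendix lemmas; the only (minor) obstacle is the bookkeeping for the $\|\cdot\|_\B$ norm of the second-order Taylor remainder, which amounts to replacing the linear remainder $z$ by the quadratic remainder $\tfrac12 z^2$ in the estimate used for \eqref{fd} and then re-running the same application of (V8), (V9), (C2) and \eqref{eq:boundedv}. No additional hypothesis on the cocycle or on the observable is needed, so the lemma is essentially a direct consequence of the structural properties of $G$ and $H$ already exploited earlier in the appendix.
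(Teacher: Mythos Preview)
Your proposal is correct and matches the paper's approach: the paper gives no detailed argument for Lemma~\ref{laux} but simply states that it ``can be obtain[ed] by repeating the arguments from~\cite[Appendix B.2]{DFGTV1}'', and what you outline is precisely that repetition---differentiating the first-order formulas \eqref{fd} and Lemma~\ref{l1} via the same Taylor-remainder estimate combined with (V8), (V9), (C2) and~\eqref{eq:boundedv}, and exploiting the affine dependence on $\mc W$ for the mixed partials. There is nothing to add or correct.
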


\subsection{A local version of the G\"artner-Ellis theorem}
Let $d\geq 1$ be an integer and let $S_n$ be a sequence of $\mathbb R^d$-valued random vectors satisfying the following condition:
\begin{assumption}\label{LD-AS}
There exists an open set $U\subset\mathbb R^d$ around the origin so that for any $t\in U$ the limit
\[
\Lambda(t):=\lim_{n\to\infty}\frac 1n\log \mathbb \E[e^{t\cdot S_n}]
\]
exists. Moreover, the function $t\mapsto \Lambda(t)$ is of class $C^2$ on  $U$, the Hessian of $\Lambda$ is positive definite at $t=0$ and $\nabla\Lambda(0)=0$.
\end{assumption}

Next, let $B\subset\mathbb R^d$ be a closed ball around the origin so $D^2 \Lambda(t)$ is positive definite for any $t\in B$, where $D^2 \Lambda (t)$ denotes the Hessian of $\Lam$ in $t$.  Consider the function $\Lambda^*:\mathbb R^d\to\mathbb R$ given by
\[
\Lambda^*(x)=\sup_{t\in B}\left(t\cdot x-\Lambda(t)\right).
\]
Then $\Lambda^*$ is a continuous convex function (continuity follows from compactness of $B$). By taking $t=0$ we see that $\Lambda^*(x)\geq0$. By considering the point $t=\delta x/\lvert x\rvert$, for some sufficiently small $\del>0$ (which depends only on $B$) and taking into account that $\Lambda$ is bounded we see that 
\[
\Lambda^*(x)\geq \delta|x|-C\to\infty\,\text{ as }\,|x|\to\infty. 
\]
In particular, using the terminology in~\cite{DemZet}, we have that $\Lambda^*$ is a good-rate function.

Our main result here is the following theorem.
\begin{thm}\label{GEThm} 
(i) For any closed set $A\subset\mathbb R^d$ we have
\[
\limsup_{n\to\infty}\frac1{n}\log \mathbb P(S_n/n\in A)\leq-\inf_{x\in A}\Lambda^*(x).
\]

(ii) There exists a closed ball $B_0$ around the origin so that for any open subset $A$ of $B_0$ we have
\[
\liminf_{n\to\infty}\frac1{n}\log \mathbb  P(S_n/n\in A)\geq-\inf_{x\in A}\Lambda^*(x).
\]
\end{thm}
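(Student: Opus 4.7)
The plan is to adapt the classical proof of the Gärtner–Ellis theorem to the local setting, treating the upper bound via Chernoff/covering and the lower bound via an exponential change of measure.

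For the upper bound (i), I would proceed in three steps. First, for any $t\in B$ and any open half-space $H_{t,c}=\{y\in\mathbb{R}^d:t\cdot y>c\}$, a direct Markov inequality gives $\mathbb{P}(S_n/n\in H_{t,c})\leq e^{-nc}\mathbb{E}[e^{t\cdot S_n}]$, so by Assumption~\ref{LD-AS}
\[
\limsup_{n\to\infty}\tfrac{1}{n}\log\mathbb{P}(S_n/n\in H_{t,c})\leq -(c-\Lambda(t)).
\]
Second, for a compact $K$ with $\alpha<\inf_{x\in K}\Lambda^*(x)$, for each $x\in K$ I choose $t_x\in B$ with $t_x\cdot x-\Lambda(t_x)>\alpha$; then $V_x=\{y:t_x\cdot y>\Lambda(t_x)+\alpha-\varepsilon\}$ is an open half-space containing $x$, a finite subcover $V_{x_1},\dots,V_{x_m}$ of $K$ combined with the previous step and a union bound yields $\limsup_n\frac{1}{n}\log\mathbb{P}(S_n/n\in K)\leq-\alpha+\varepsilon$. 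Third, I establish exponential tightness: taking $t=\pm\delta e_j\in B$ for some small $\delta>0$ gives
\[
\mathbb{P}(|S_n^{(j)}|/n>R)\leq 2e^{-n\delta R}\max_{\pm}\mathbb{E}[e^{\pm\delta S_n^{(j)}}],
\]
whose rate can be made arbitrarily large by choosing $R$ large, since $\Lambda(\pm\delta e_j)$ is finite. Combining exponential tightness with the compact upper bound extends (i) to all closed sets, using the fact that $\Lambda^*$ is a good rate function (already noted in the excerpt: $\Lambda^*(x)\to\infty$ as $|x|\to\infty$).

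For the lower bound (ii), since $D^2\Lambda$ is positive definite on $B$ and $\nabla\Lambda(0)=0$, by the inverse function theorem $\nabla\Lambda$ is a $C^1$-diffeomorphism from an open neighborhood $W$ of $0$ (with $\overline{W}\subset\operatorname{int}(B)$) onto an open neighborhood of $0$; I take $B_0$ to be a closed ball around $0$ contained in the latter. For $x\in B_0$ set $t(x)=(\nabla\Lambda)^{-1}(x)\in W$. Given an open $A\subset B_0$ and $x\in A$, choose $\delta>0$ with $B(x,\delta)\subset A$, and perform the exponential tilt: let $Z_n=\mathbb{E}[e^{t(x)\cdot S_n}]$ and $d\nu_n(y)=e^{nt(x)\cdot y}Z_n^{-1}d\mu_n(y)$, where $\mu_n$ is the law of $S_n/n$. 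Then
\[
\mu_n(B(x,\delta))\geq Z_n\cdot e^{-n(t(x)\cdot x+|t(x)|\delta)}\cdot \nu_n(B(x,\delta)),
\]
and since $\frac{1}{n}\log Z_n\to\Lambda(t(x))$, it suffices to show $\frac{1}{n}\log \nu_n(B(x,\delta))\to 0$.

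This last step is the main obstacle. It is a weak-LLN-with-exponential-rate statement under the tilted measure, and I would obtain it by applying the already-proven upper bound to $\nu_n$. The tilted log-moment generating function is $\tilde\Lambda(s)=\Lambda(t(x)+s)-\Lambda(t(x))$, defined and $C^2$ on a neighborhood $\widetilde{U}$ of $0$, with $\nabla\tilde\Lambda(0)=\nabla\Lambda(t(x))=x$ and Hessian $D^2\tilde\Lambda(0)=D^2\Lambda(t(x))$ positive definite. Thus Assumption~\ref{LD-AS} holds for the shifted process $S_n-nx$ under $\nu_n$ (with rate $\tilde\Lambda^*$ that is a good rate function satisfying $\tilde\Lambda^*(x)=0$ and $\tilde\Lambda^*(y)>0$ for $y\neq x$ by strict convexity near the unique minimum). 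Applying part (i) to $\nu_n$ yields an exponentially small bound on $\nu_n(B(x,\delta)^c)$, whence $\nu_n(B(x,\delta))\to 1$ and in particular $\frac{1}{n}\log\nu_n(B(x,\delta))\to 0$. Inserting this gives
\[
\liminf_{n\to\infty}\tfrac{1}{n}\log\mu_n(A)\geq -t(x)\cdot x+\Lambda(t(x))-|t(x)|\delta=-\Lambda^*(x)-|t(x)|\delta,
\]
and letting $\delta\to 0$ then taking supremum over $x\in A$ (equivalently, infimum of $\Lambda^*(x)$) delivers (ii). The restriction to $A\subset B_0$ appears precisely because the inversion of $\nabla\Lambda$ is only available there, and the strict convexity / good-rate-function properties of $\tilde\Lambda^*$ that drive the tilted LLN are only guaranteed while $t(x)$ remains in the interior of $B$.
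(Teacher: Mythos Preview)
Your proposal is correct and follows essentially the same strategy as the paper: a Chernoff/covering argument plus exponential tightness for the upper bound, and an exponential change of measure combined with an application of the already-proven upper bound (to the tilted process) for the lower bound. The only noticeable cosmetic differences are that you cover the compact set by half-spaces $\{y:t_x\cdot y>\Lambda(t_x)+\alpha-\varepsilon\}$ while the paper covers it by small balls $B_{x,\varepsilon}$, and you invoke the inverse function theorem for $\nabla\Lambda$ where the paper phrases this as an ``open mapping'' argument; both choices are standard and lead to the same conclusions.
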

\begin{rmk}
The proof of the theorem is a modification of the proof of Theorem 2.3.6 in \cite{DemZet}. We do not consider this theorem as a new result, but we have not managed to find a formulation of it in the literature. For readers' convenience we include here a complete proof.
\end{rmk}
\begin{proof}[Proof of Theorem \ref{GEThm}]
Set $\bar S_n=\frac 1n S_n$.
Let us start by establishing the upper bound. 
For any $x\in\mathbb R^d$, choose  $t(x)\in B$ such that
\[
\Lambda^*(x)=x\cdot t(x)-\Lambda(t(x)).
\]
Let $A$ be a compact subset of $\mathbb R^d$ and  take an arbitrary  $\epsilon>0$. For any $x\in A$, let $B_{x,\epsilon}$ be a ball around $x$ of radius $\epsilon$. Then
\[
\mathbb P(\bar S_n\in B_{x,\epsilon})=\mathbb E[\mathbb I(\bar S_n\in B_{x,\epsilon})]\leq \mathbb E[e^{n(\bar S_n\cdot t(x)-\inf_{y\in B_{x,\epsilon}}y\cdot t(x))}].
\]
Observe that
\[
x\cdot t(x)-\inf_{y\in B{x, \epsilon}} y\cdot t(x)=\sup_{y\in B{x,\epsilon}}\big(x\cdot t(x)-y\cdot t(x)\big)\leq \epsilon R
\]
where $R=\sup_{t\in B}|t|$. We conclude that 
\[
\limsup_{n\to\infty}\frac1{n}\log \mathbb P(\bar S_n\in B_{x,\epsilon})\leq \epsilon R-\big(x\cdot t(x)-\Lambda(t(x))\big)=
\epsilon R-\Lambda^*(x).
\]
Since $A$ is compact, we can cover it with  $N$ balls $B_{x_i,\epsilon},\,i \in \{1,2,\ldots ,N\}$ for some  $N\in \mathbb N$ and $x_1,\ldots ,x_N \in A$. Then, we have that 
\[
\log \mathbb  P(\bar S_n\in A)\leq \log N+\max_i\log \mathbb  P(\bar S_n  \in B_{x_i,\epsilon})
\]
and hence
\[
\limsup_{n\to\infty}\frac1{n}\log \mathbb  P(\bar S_n\in A)\leq R\epsilon-\min_{i}\Lambda^*(x_i).
\]
Since $\Lambda^*$ is continuous, passing to the limit when $\epsilon \to 0$,  we obtain that $\min_{i}\Lambda^*(x_i)$ converges to $\inf_{x\in A}\Lambda^*(x)$, which completes the proof of the upper bound for compact sets. In general (see~\cite[Lemma 1.2.8]{DemZet}), in order to prove the upper bound for  closed sets it is enough to prove it for compact sets and to show that the sequence $\mu_n$ of the laws of $\bar S_n$ is exponentially tight, i.e. that for any $M>0$ there is a compact set $K_M$ such that 
\begin{equation}\label{EXTIGH}
\limsup_{n\to\infty}\frac 1n\log \mu_n(\mathbb R^d\setminus K_M)<-M.
\end{equation}
Let $1\leq j\leq d$, $\rho>0$ and denote by $\bar S_{n,j}$ the $j$-th coordinate of $\bar S_{n,j}$. Denote also by $e_j$ the standard $j$-th unit vector. Let $t>0$ be sufficiently small so that $te_j\in U$. Then by the Markov inequality,
\[
\mathbb P(\bar S_{n,j}\geq\rho)\leq \mathbb  P(e^{t S_{n,j}}\geq e^{nt\rho})\leq e^{-t\rho n} \mathbb E[e^{t S_{n,j}}].
\]
Therefore,
\[
\limsup_{n\to\infty}\frac 1n\log \mathbb  P(\bar S_{n,j}\geq\rho)\leq \Lambda(te_j)-t\rho\to-\infty \text{ as }\rho\to\infty.
\]
Similarly,
\[
\lim_{\rho\to\infty}\limsup_{n\to\infty}\frac 1n\log \mathbb  P(\bar S_{n,j}\leq -\rho)=-\infty,
\]
and thus (\ref{EXTIGH}) follows.

In order to establish the lower bound, we will first observe that 
by the open mapping theorem we have the function $\nabla\Lambda$ maps the interior $B^o$ of $B$ onto an open set $V$.  Therefore, there exists an open set $V$ around the origin such that for any $y\in V$, there exists a unique $\eta=\eta(y)\in B^o$ so that $y=\nabla\Lambda(\eta)$. Since $D^2 \Lambda$ is positive definite on $B^o$,  we derive that 
\begin{equation}\label{2.3.10}
\Lambda^*(y)=y\cdot\eta-\Lambda(\eta).
\end{equation}

Next, notice that  for the lower bound to hold true for open subsets of $V$ it is enough to show that for any $y\in V$ we have
\[
\lim_{\delta\to0}\liminf_{n\to\infty}\frac1n\log \mathbb  P(\bar S_n\in B_{y,\delta})\geq-\Lambda^*(y).
\]
Let $y\in V$ and write $y=\nabla \Lam (\eta)$. We will make now an exponential change of measure: consider the probability measures $\tilde\mu_n$ given by 
\[
\frac{d\tilde\mu_n}{d\mu_n}(z)=\exp(n\eta\cdot z-\Lambda_n(\eta))
\] 
where $\mu_n$ is the law of $\bar S_n$ and $\Lambda_n(\eta)=\log \mathbb  E[e^{\eta\cdot S_n}]$. Let $\bar Z_n$ be a random vector distributed according to $\tilde\mu_n$ and set $Z_n=n\bar Z_n$. Then
\begin{eqnarray*}
\frac1n\log  \mu_n(B_{y,\delta})=\frac1n\log \mathbb E[e^{\eta\cdot S_n}]-\eta\cdot y+\frac1n\log \int_{z\in B_{y,\delta}}e^{n\eta\cdot(y-z)}d\tilde\mu_n(z)\\\geq \frac1n\log \mathbb E[e^{\eta\cdot S_n}]-\eta\cdot y-|\eta|\delta+\frac1n\log \tilde\mu_n(B_{y,\delta})
\end{eqnarray*}
and therefore
\begin{eqnarray*}
\lim_{\delta\to0}\liminf_{n\to\infty}\frac 1n\log \mu_n(B_{y,\del})\geq\Lambda(\eta)-\eta\cdot y+
\lim_{\delta\to0}\liminf_{n\to\infty}\frac 1n\log \tilde\mu_n(B_{y,\del})\\\geq
-\Lambda^*(y)+\lim_{\delta\to0}\liminf_{n\to\infty}\frac 1n\log \tilde\mu_n(B_{y,\del}).
\end{eqnarray*}

The proof of the lower bound will be complete once we show that for any $\delta>0$,
\begin{equation}\label{Once}
\lim_{\delta\to 0}\liminf_{n\to\infty}\frac 1n\log \tilde\mu_n(B_{y,\delta})=0.
\end{equation}
Define $T_n=Z_n-ny$. Then for any $t\in U_\eta:=U-\{\eta\}$ we have
\[
\boldsymbol{\Lambda}(t):=\lim_{n\to\infty}\frac 1n\log \mathbb  E[e^{t\cdot T_n}]=\Lambda(t+\eta)-\Lambda(\eta)-t\cdot y.
\]
Observe next that $\boldsymbol{\Lambda}$ satisfies all the conditions in Assumption \ref{LD-AS} with $U_\eta$ in place of $U$. Therefore, setting
\begin{equation}\label{StarForm}
\boldsymbol{\Lambda}^*(x):=\sup_{t\in B_\eta}\big(x\cdot t-\boldsymbol{\Lambda}(t)\big)=\Lambda^*(x+y)+\Lambda(\eta)-(x+y)\cdot\eta ,
\end{equation}
using the already established  large deviations upper bound we obtain that
\[
\limsup_{n\to\infty} \frac 1 n \log \tilde\mu_n(\mathbb R^d\setminus B_{y,\delta})=
\limsup_{n\to\infty}\frac 1 n \log \mathbb P(|T_n/n|\geq \del)\leq-\inf_{|z|\geq\del}\boldsymbol{\Lambda}^*(z)=-\boldsymbol{\Lambda}^*(z_0),
\]
for $z_0$ such that $|z_0|\geq \delta$ (observe that the existence of $z_0$ follows from $\lim_{|z|\to\infty}\boldsymbol{\Lambda}^*(z)=\infty$). We claim that $\boldsymbol{\Lambda}^*(z_0)>0$. Using this we obtain that
\[
\limsup_{n\to\infty} \frac 1 n \log \tilde\mu_n(\mathbb R^d\setminus B_{y,\delta})<0,
\]
for any $\delta>0$. Hence, $\tilde\mu_n(\mathbb R^d\setminus B_{y,\delta}) \to 0$ and therefore $\tilde\mu_n( B_{y,\delta}) \to 1$ for every $\delta >0$
which clearly implies (\ref{Once}). 

Let us now show that $\boldsymbol{\Lambda}^*(z_0)>0$. Assume the contrary, i.e. that  $\boldsymbol{\Lambda}^*(z_0)=0$. Then by (\ref{StarForm}) we have
\begin{equation}\label{tt}
\Lambda^*(y+z_0)=\sup_{t\in B}\big(t\cdot (y+z_0)-\Lambda(t)\big)=\eta \cdot (y+z_0)-\Lambda(\eta).
\end{equation}
This means that the supremum in~\eqref{tt} is actually maximum and it is  achieved at $t=\eta$ and so 
$y=\nabla\Lambda(\eta)=y+z_0$ which is a contraction since $z_0\not=0$.
\end{proof}

\begin{rmk}
It is clear from the proof of Theorem \ref{GEThm} that if for some positive sequence $(\ve_n)_n$ so that $\lim_{n\to\infty}\ve_n=0$ the limit
\[
\Lambda(t):=\lim_{n\to\infty}\ve_n\log \mathbb \E[e^{t\cdot S_n}]
\]
exists in some neighborhood $U$ of the origin, and satisfies all the other conditions required in Assumption \ref{LD-AS}, then: 

(i) For any closed set $A\subset\mathbb R^d$ we have
\[
\limsup_{n\to\infty}\ve_n\log \mathbb P(\ve_n S_n\in A)\leq-\inf_{x\in A}\Lambda^*(x).
\]

(ii) There exists a closed ball $B_0$ around the origin so that for any open subset $A$ of $B_0$ we have
\[
\liminf_{n\to\infty}\ve_n\log \mathbb  P(\ve_n S_n\in A)\geq-\inf_{x\in A}\Lambda^*(x).
\]
\end{rmk}

\bibliographystyle{abbrv}

\end{document}